\renewenvironment{proof}[1][\proofname]{\par
 \pushQED{\qed}%
 \normalfont \topsep6\p@\@plus6\p@\relax
 \trivlist
 \item\relax
 {\bfseries
 #1\@addpunct{.}}\hspace\labelsep\ignorespaces
}{%
 \popQED\endtrivlist\@endpefalse
}
\theoremstyle{theorem}
\newtheorem{Theorem}{Theorem}[section]
\crefname{Theorem}{Theorem}{Theorems}
\newaliascnt{Lemma}{Theorem}
\newtheorem{Lemma}[Lemma]{Lemma}
\crefname{Lemma}{Lemma}{Lemmas}
\newaliascnt{Proposition}{Theorem}
\newtheorem{Proposition}[Proposition]{Proposition}
\crefname{Proposition}{Proposition}{Propositions}
\newaliascnt{Corollary}{Theorem}
\newtheorem{Corollary}[Corollary]{Corollary}
\crefname{Corollary}{Corollary}{Corollaries}
\newaliascnt{Remark}{Theorem}
\newtheorem{Remark}[Remark]{Remark}
\crefname{Remark}{Remark}{Remarks}
\theoremstyle{definition}
\newaliascnt{Definition}{Theorem}
\newtheorem{Definition}[Definition]{Definition}
\crefname{Definition}{Definition}{Definitions}
\newaliascnt{Example}{Theorem}
\crefname{Example}{Example}{Examples}
\crefname{Section}{Section}{Sections}
\crefname{Subsection}{Subsection}{Subsections}
\numberwithin{equation}{section}
\newcommand{\R}{\mathbb{R}}
\newcommand{\N}{\mathbb{N}}
\def\abs#1{|{#1}|}
\def\norm#1{\|{#1}\|}
\def\wlimit{\rightharpoonup}
\def\PSPC{\mathrm{(PSPC)}}
\def\half{{\frac{1}{2}}}
\definecolor{orcidlogocol}{HTML}{A6CE39}
\tikzset{
 orcidlogo/.pic={
 \fill[orcidlogocol] svg{M256,128c0,70.7-57.3,128-128,128C57.3,256,0,198.7,0,128C0,57.3,57.3,0,128,0C198.7,0,256,57.3,256,128z};
 \fill[white] svg{M86.3,186.2H70.9V79.1h15.4v48.4V186.2z}
 svg{M108.9,79.1h41.6c39.6,0,57,28.3,57,53.6c0,27.5-21.5,53.6-56.8,53.6h-41.8V79.1z M124.3,172.4h24.5c34.9,0,42.9-26.5,42.9-39.7c0-21.5-13.7-39.7-43.7-39.7h-23.7V172.4z}
 svg{M88.7,56.8c0,5.5-4.5,10.1-10.1,10.1c-5.6,0-10.1-4.6-10.1-10.1c0-5.6,4.5-10.1,10.1-10.1C84.2,46.7,88.7,51.3,88.7,56.8z};
 }
}
\newcommand\orcidicon[1]{\href{https://orcid.org/#1}{\mbox{\scalerel*{
\begin{tikzpicture}[yscale=-1,transform shape]
\pic{orcidlogo};
\end{tikzpicture}
}{|}}}}
\def\calC{{\cal C}} 
\def\calI{{\cal I}}
\def\calM{{\cal M}}
\def\calS{{\cal S}}
\def\scrP{\mathscr{P}}
\def\epsilon{\varepsilon}
\DeclareMathOperator{\sgn}{sgn}
\def\RE{\R\times E}
\def\intRN{\int_{\R^N}}
\def\distE{{\rm dist}_E}
\def\ob{\overline b}
\def\ub{\underline b}
\def\oGamma{\overline \Gamma}
\def\uGamma{\underline\Gamma}
\newcommand{\RN}{\R^N}
\newcommand{\mc}[1]{\mathcal{#1}}
\def\supp{\mathop{\rm supp}}
\def\calK{{\cal K}}
\def\calG{{\cal G}}
\def\eps{\varepsilon}
\begin{document}

\title
{Normalized ground states for NLS equations \\ with mass critical nonlinearities}

\author{
 Silvia Cingolani%
\thanks{%
Corresponding author: \href{mailto:silvia.cingolani@uniba.it}{silvia.cingolani@uniba.it}.}$\,$
 	\orcidicon{0000-0002-3680-9106}
 \\ \small{Dipartimento di Matematica, Universit\`{a} degli Studi di Bari Aldo Moro}
 \\ \small{Via E. Orabona 4, 70125, Bari, Italy}
 \\ \vspace{-0.5em}
 \\Marco Gallo
 	\orcidicon{0000-0002-3141-9598}
 \\ \small{Dipartimento di Matematica e Fisica, Universit\`{a} Cattolica del Sacro Cuore}
 \\ \small{Via della Garzetta 48, 25133 Brescia, Italy}
 \\ \vspace{-0.5em}
 \\Norihisa Ikoma
 	\orcidicon{0000-0002-3547-5650}
 \\ \small{Department of Mathematics, Faculty of Science and Technology, Keio University}
 \\ \small{Yagami Campus, 3-14-1 Hiyoshi, Kohoku-ku, Yokohama, Kanagawa 223-8522, Japan}
 \\ \vspace{-0.5em}
 \\Kazunaga Tanaka 
 	\orcidicon{0000-0002-1144-1536}
 \\ \small{Department of Mathematics, School of Science and Engineering, Waseda University}
 \\ \small{3-4-1 Ohkubo, Shijuku-ku, Tokyo 169-8555, Japan}
 } 

\date{}

\maketitle

\abstract{
We study normalized solutions $(\mu,u)\in \mathbb{R} \times H^1(\mathbb{R}^N)$ to nonlinear Schrödinger equations 
 \[	-\Delta u + \mu u = g(u)\quad \hbox{in}\ \mathbb{R}^N, \qquad
 \frac{1}{2}\int_{\mathbb{R}^N} u^2 dx = m,
 \]
where $N\geq 2$ and the mass $m>0$ is given. Here $g$ has an $L^2$-critical growth, both at the origin and at infinity, that is $g(s)\sim |s|^{p-1}s$ as $s\sim 0$ and $s\sim\infty$, where $p=1+\frac{4}{N}$. 
We continue the analysis started in \cite{CGIT24}, where we found two (possibly distinct) minimax values $\underline{b} \leq 0 \leq \overline{b}$ of the Lagrangian functional.
In this paper we furnish explicit examples of $g$ satisfying $\underline{b}<0<\overline{b}$, $\underline{b}=0<\overline{b}$ and $\underline{b}<0=\overline{b}$; notice that $\underline{b}=0=\overline{b}$ in the power case $g(t)=|t|^{p-1}t$. Moreover, we deal with the existence and non-existence of a solution with minimal energy. 
Finally, we discuss the assumptions required on $g$ to obtain the existence of a positive solution for perturbations of $g$. 
}

\medskip

\noindent
\textbf{MSC2020:} 
35A01, 
35B20, 
35B33, 
35B35, 
35B38, 
35J20, 
35J91, 
35Q40, 
35Q55, 
47F10, 
47J30, 
49J35. 

\noindent
\textbf{Key words:} 
Nonlinear Schrödinger equations, 
nonlinear elliptic PDEs, 
normalized solutions, 
prescribed mass problem, 
$L^2$-critical exponent, 
least energy,
$L^2$-minimum,
Lagrangian approach,
stability and perturbation properties.

\bigskip

\bigskip

\newpage

\tableofcontents


\section{Introduction}

Aim of this paper is to continue the analysis started in \cite{CGIT24} and study existence, non-existence and explicit examples to the following equation: 
 \begin{equation}\label{eq_main}
 \begin{dcases}
 -\Delta u + \mu u = g(u) & \hbox{in $\R^N$}, 
 \\
 \frac{1}{2} \intRN u^2 \, dx = m,&
 \end{dcases}
 \end{equation}
where $N\geq 2$, $g:\,\R\to\R$ and the mass $m>0$ are given, while $(\mu,u)\in\R\times H^1(\R^N)$ is unknown. 
The literature on the topic of normalized solutions is broad, and we confine to mention \cite{Soa20, BaVa13, BaSo17, JeLe22, WeWu22, JeLu22CV, HiTa19, DST23, JZZ24, MeSc24, BiMe21, Sch22, BDS25}.
Here we will focus on $g$ satisfying 
\[g(s)\sim \abs s^{p-1}s \quad \hbox{as $s\sim 0$ and $s\sim\infty$}, \qquad p\coloneqq 1+\frac{4}{N},\]
in the sense clarified below. Notice that we assume $\lim_{s \to 0} \frac{g(s)}{\abs{s}^{p-1}s} = \lim_{s \to \infty} \frac{g(s)}{\abs{s}^{p-1}s}=1
$, 
while the case in which these two limits are finite but different has been considered in \cite{JZZ24, Sch22}; we refer also to the recent \cite{ScSm24} where the same problem in dimension $N=1$ has been treated.
For a general discussion about this problem and the related literature we refer to \cite{CGIT24}. 
We start by stating our set of assumptions and recalling the main results obtained in \cite{CGIT24}. 

Let 
\[
h(s) \coloneqq g(s) - |s|^{p-1} s, \quad G(s)\coloneqq \int_0^s g(t) \,dt = \frac{1}{p+1} |s|^{p+1} + H(s), \quad H(s) \coloneqq \int_0^t h(t) \, dt
\]
and consider the auxiliary function
 \begin{equation}\label{eq_alpha}
 \rho(s) \coloneqq {\frac{H(s)}{s^2/2}}. 
 \end{equation}
Moreover we set
\begin{equation*} 
 F= F_p\coloneqq \big\{u \in L^{p+1} (\R^N) \mid \nabla u \in L^2(\R^{N}), \
 u(x) = u(\abs{x})\big\}
 \end{equation*}
 endowed with $\norm u_{F} \coloneqq \norm{\nabla u}_2 + \norm u_{p+1}$.
 Finally let $\omega_1$ be the unique, positive (radially symmetric) solution of 
 \begin{equation*} 
 -\Delta \omega+ \omega= \abs{\omega}^{p-1} \omega \quad \text{in}\ \R^N,
 \end{equation*}
 and write
 \begin{equation}\label{eq_m1}
 m_1 \coloneqq \frac{1}{2} \intRN \omega_1^2 \, dx. 
 \end{equation}

Let us consider the following assumptions.

\begin{enumerate}[label={\rm (g\arabic*)}]
 \setcounter{enumi}{-1}
  \item \label{(g0)} $g\in C(
 [0,+\infty)
 )$.
 \end{enumerate}

\begin{enumerate}[label={\rm (g1$\alpha$)}] 
\item \label{(g1*)} There exists $\alpha\in\R$ such 
that
 \begin{equation*}
 \lim_{s\to 
 0 
 }{\frac{h(s)}{\abs s^{p-1}s}} =0, \quad
 \lim_{s\to 
+\infty
 }{\frac{h(s)}{s}} =\alpha.
 \end{equation*}
\end{enumerate}

\begin{enumerate}[label={\rm (g\arabic*)}]
 \setcounter{enumi}{1}
 
 \item \label{(g2)}
 If $u_0\in F$ satisfies $u_0\geq 0$ and
 \begin{equation} \label{eq_zero_mass} 
 -\Delta u = g(u) \quad \text{in}\ \R^N, 
 \end{equation}
 then $u_0 \equiv 0$. 
\end{enumerate}

\begin{Remark}\label{Rem-on-g2}
We observe the following facts.
	\begin{enumerate}[label={\rm (\roman*)}]
	\item Since we are mainly interested in positive solutions, we require conditions in \ref{(g0)}, \ref{(g1*)} and \ref{(g2)} to hold in $[0,+\infty)$; indeed, it is sufficient to work with the odd extension of $g$ (see \ref{(g0')} below). On the other hand, to state non-existence theorems also of negative and sign-changing solutions, we will require some conditions also on $(-\infty, 0)$ (see \ref{(rho+1)} and \ref{(rho-1)} {\rm (ii)}).
    \item 
		Condition \ref{(g1*)} implies $ \lim_{s \to \infty} \rho(s)=\alpha$ where $\rho$ is defined in \eqref{eq_alpha}. 
        \item 
		The definition of the space $F$ depends on $p$, 
        and 
		condition \ref{(g2)} focuses on solutions belonging to $F$. 
		In this sense, the exponent $p$ also plays a role in \ref{(g2)}. 
		
		\item 
		 If $u \in F$ is a nontrivial non-negative solution of \eqref{eq_zero_mass} 
		under \ref{(g0)} and \ref{(g1*)}, 
		then elliptic regularity and the strong maximum principle imply that 
		$u$ is a positive classical solution of \eqref{eq_zero_mass}. 
	\end{enumerate}
\end{Remark}

One of the main results in \cite{CGIT24} is the following existence statement 
(see also \cref{thm_old_ex_precise} for a refined version).

\begin{Theorem}[{\cite[Theorem 1.1]{CGIT24}}] \label{thm_old_exist}
	Let $N\geq 2$, $m=m_1$ be given in \eqref{eq_m1}, and assume \ref{(g0)}, \ref{(g1*)} with $\alpha = 0$ and \ref{(g2)}.
	Then \eqref{eq_main} has a positive radially symmetric solution
	$(\mu,u)\in (0,\infty)\times H^1(\R^N)$.
\end{Theorem}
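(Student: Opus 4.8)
The plan is to realize the solution as a minimizer of the Lagrangian (or equivalently, via a constrained minimization) exploiting the special value $m=m_1$, which is precisely the mass of the soliton $\omega_1$ for the pure power $L^2$-critical problem. The natural first step is to introduce the functional $\calL(\mu,u) = \half\norm{\nabla u}_2^2 + \tfrac{\mu}{2}\norm u_2^2 - \intRN G(u)\,dx$ together with the mass constraint $\half\norm u_2^2 = m_1$, and to recall from \cite{CGIT24} the minimax characterization of the values $\ub\le 0\le \ob$; under \ref{(g1*)} with $\alpha=0$ one expects (and should invoke from the companion paper) that these values collapse, $\ub=\ob=0$, and that $0$ is attained. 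So I would reduce \cref{thm_old_exist} to the assertion that the minimax level $\ob$ (equivalently $\ub$) is a critical value attained at a positive radial function, i.e. exactly the statement quoted as \cite[Theorem 1.1]{CGIT24}.

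Assuming instead that I must reprove it essentially from scratch, I would set up a scaling/fibering analysis. For $u\in F$ with $\half\norm u_2^2=m_1$ consider the rescaled family $u_t(x)=t^{N/2}u(tx)$, which preserves the $L^2$-norm; then $\norm{\nabla u_t}_2^2 = t^2\norm{\nabla u}_2^2$ and $\intRN |u_t|^{p+1} = t^2\intRN|u|^{p+1}$ because $p+1 = 2+\tfrac4N$, so the quadratic-gradient term and the critical-power term scale identically — this is the mechanism behind mass-criticality. Using the Gagliardo–Nirenberg inequality with optimal constant attained by $\omega_1$, one has $\intRN|u|^{p+1}\le \big(\tfrac{m}{m_1}\big)^{2/N}\norm{\nabla u}_2^2$ with equality forced when $m=m_1$ only on the soliton orbit. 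Thus on the constraint $\half\norm u_2^2=m_1$ the functional $u\mapsto \half\norm{\nabla u}_2^2 - \tfrac1{p+1}\intRN|u|^{p+1} - \intRN H(u)\,dx$ has its leading two terms non-negative, and the whole analysis hinges on controlling the remainder $\intRN H(u)\,dx$.

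The key steps, in order: (1) show the relevant energy level is finite and the associated minimizing sequences can be taken bounded in $F$ — here I would use \ref{(g1*)} at infinity with $\alpha=0$ to bound $H(s)$ by $\epsilon s^2 + C_\epsilon|s|^{p+1}$ (roughly), absorbing the $|s|^{p+1}$ part via the sharp Gagliardo–Nirenberg inequality at $m=m_1$ and the $s^2$ part by the mass constraint; (2) pass to the limit: by radial symmetry the embedding $F\hookrightarrow L^q$ is compact for subcritical $q$, so the nonlinear term converges; the loss of compactness can only come from the scaling degeneracy (vanishing/dilation) and from the borderline $L^{p+1}$ term, and one rules it out by a concentration-compactness / rescaling argument combined with assumption \ref{(g2)}, which kills the limiting zero-mass equation $-\Delta u = g(u)$ in $F$; (3) identify the Lagrange multiplier $\mu$ and show $\mu>0$ using the Pohozaev/Nehari identities together with \ref{(g1*)} $\alpha=0$ (the case $\mu=0$ being exactly the excluded zero-mass problem, and $\mu<0$ being excluded by a further identity); (4) upgrade the minimizer to a positive classical radial solution by working with the odd/even extension, applying the strong maximum principle and elliptic regularity as in \cref{Rem-on-g2}(iv).

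The main obstacle is step (2): at $m=m_1$ the problem sits exactly at the threshold of the Gagliardo–Nirenberg inequality, so the quadratic-plus-critical part of the energy is degenerate (it vanishes on the whole non-compact soliton orbit $\{\,(\omega_1)_t : t>0\,\}$), and compactness is genuinely delicate — a minimizing sequence may spread out to zero or concentrate. The perturbation $H$ (with $\rho(s)\to 0$ at infinity by \cref{Rem-on-g2}(ii)) is what selects a scale and must be shown to do so; making this selection rigorous, and in particular excluding the "escape to the zero-mass solution" scenario via \ref{(g2)}, is the technical heart of the argument. Everything else — the scaling identities, the Pohozaev relation forcing $\mu>0$, and the regularity/positivity bootstrap — is comparatively routine.
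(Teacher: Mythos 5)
There is a genuine gap: your core strategy — minimizing $\calI$ on the constraint $\calS_0$ and extracting a minimizer by concentration–compactness — cannot prove the theorem in its stated generality, because under the hypotheses \ref{(g0)}, \ref{(g1*)} with $\alpha=0$ and \ref{(g2)} the infimum $d=\inf_{\calS_0}\calI$ need not be attained at all. The paper makes this explicit: one always has $d=\ub\le 0$ (\cref{thm_exists_min}), and if for instance $G(s)<G_0(s)$ for all $s\neq 0$ together with \ref{(g3)}, then $d=\ub=0$ while $\calI(u)>0$ for every $u\in\calS_0$, so $\calM_d=\emptyset$ (see \cref{prop_b_neg_post}~(ii) and \cref{Rem:Md}~(2)). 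In that regime the solution promised by \cref{thm_old_exist} lives at the \emph{positive} level $\ob>0$ and is produced by the two–dimensional linking class $\oGamma$, not by any minimization on the sphere; your proposal has no mechanism to reach it. Relatedly, your expectation that $\ub=\ob=0$ "collapses" under $\alpha=0$ is false: \cref{thm_examples} exhibits all four sign patterns $\ub<0<\ob$, $\ub=0<\ob$, $\ub<0=\ob$, $\ub=0=\ob$ under exactly these hypotheses. Your first paragraph, which "reduces" the statement to the quoted theorem of \cite{CGIT24}, is of course circular and cannot count as a proof.

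The actual argument (from \cite{CGIT24}, recalled in \cref{Section_prelim}) works in the product space $\R\times H^1_r(\R^N)$ with the Lagrangian $I(\lambda,u)$, $\mu=e^\lambda$, and treats three cases: if $\ub<0$ the one–dimensional class $\uGamma$ gives a critical point; if $\ob>0$ the two–dimensional class $\oGamma$ (which links with $\{\lambda\}\times\calS_0$ for every $\lambda$) gives one, and this is precisely where \ref{(g2)} enters, through the Palais--Smale--Pohozaev--Cerami condition of \cref{prop_compact_Kb}~(ii), to exclude that a sequence at a positive level degenerates ($\lambda\to-\infty$) onto a nontrivial nonnegative solution of the zero–mass equation \eqref{eq_zero_mass} in $F$; if $\ub=0=\ob$ a separate argument produces uncountably many solutions. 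Your instinct that \ref{(g2)} kills the limiting zero–mass profile and that the Pohozaev identity forces $\mu>0$ is correct in spirit, but it is deployed inside a framework (global $L^2$–minimization) that only covers the case $d<0$; for $d=0$ the minimax structure in $(\lambda,u)$ is indispensable, so the proposal as written does not prove the theorem.
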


\begin{Remark}\label{Rem-val-g2}
	As pointed out in \cref{Rem-on-g2}, condition \ref{(g2)} depends on the exponent $p$ through $F$, 
	and the following hold (see \cite[Proposition 1.2]{CGIT24}):
	\begin{enumerate}[label={\rm (\roman*)}]
		\item 
		when $N=2,3,4$, \ref{(g0)} and \ref{(g1*)} with $\alpha = 0$ imply \ref{(g2)};
		\item 
		when $N \geq 3$, \ref{(g2)} is verified provided $g$ satisfies \ref{(g0)}, \ref{(g1*)} and \ref{(g3)}, where 
	\end{enumerate}
	\begin{enumerate}[label={\rm (g\arabic*)}]
		\setcounter{enumi}{2}
		\item \label{(g3)} 
			$G(s) \geq {\frac{N-2}{2N}}g(s)s$ for all $s> 0$.
	\end{enumerate}
	We remark that condition \ref{(g3)} also appears in \cite{BDS25,BiMe21,JeLu22CV,JZZ24,MeSc24}, while conditions similar to \ref{(g2)} appear in \cite{JZZ24} and \cite{ApMo25} (in this last case, the focus is on the exterior domain).
\end{Remark}

We next recall 
the non-existence of solutions to \eqref{eq_main} obtained in \cite{CGIT24}. 
For this purpose, we introduce the following conditions: 

\begin{enumerate}[label={\rm ($\rho_+$\arabic*)}]
	\setcounter{enumii}{1}
	\item \label{(rho+1)} 
		Condition \ref{(g1*)} holds with $\alpha>0$, $g \in C(\R)$, 
	$\rho(s) \leq \alpha$ for all $s \in \R \setminus \{0\}$ and 
	there exist $(s_n)_{n \in \N}$ and $(t_n)_{n \in \N}$ such that 
	$s_n < 0 < t_n$, $s_n,t_n \to 0$, $\rho (s_n) < \alpha$ and $\rho(t_n) < \alpha$; 
\end{enumerate}
\begin{enumerate}[label={\rm ($\rho_-$\arabic*)}]
\item \label{(rho-1)}
Condition \ref{(g1*)} holds with $\alpha<0$, and 
there exists $s_0>0$ such that
\begin{itemize}
\item[(i)] $s \mapsto \rho(s)$ is non-increasing in $(0,+\infty)$, and strictly decreasing in $(0,s_0]$;
\item[(ii)] $g \in C(\R)$, $s \mapsto \rho(s)$ is non-decreasing 
in $(-\infty,0)$, and strictly increasing 
in $[-s_0,0)$.
\end{itemize}
\end{enumerate}
Notice that condition \ref{(rho+1)} is motivated by similar ones which can be found in \cite{BiMe21,BMS25,ScSm24}. 

\begin{Theorem}[{\cite[Theorem 1.7 and Remark 1.8 (iii)]{CGIT24}}] 
\label{thm_old_nonexist} 
Let $N\geq 2$, $m=m_1$ and assume \ref{(g0)}. 
If \ref{(rho-1)} {\rm (i)} holds, then no positive solution of \eqref{eq_main} exists. 
If \ref{(rho-1)} {\rm (i)} and {\rm (ii)} hold, then no solution of \eqref{eq_main} exists.
\end{Theorem}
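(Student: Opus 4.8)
The plan is to combine the Nehari and Pohozaev identities with the sharp $L^2$-critical Gagliardo--Nirenberg inequality at level $m_1$, and then extract a contradiction from the strict monotonicity of $\rho$ near the origin. Let $(\mu,u)$ solve \eqref{eq_main} with $m=m_1$. First I would recall that, by elliptic regularity, $u$ is a bounded classical solution with $u(x)\to 0$ as $\abs x\to\infty$, and $u\in H^1(\RN)\subset L^{p+1}(\RN)$ (since $p+1<\frac{2N}{N-2}$ for $N\ge 3$, with no restriction when $N=2$), so that the Pohozaev identity is legitimate. Testing \eqref{eq_main} with $u$, writing the Pohozaev identity, eliminating $\mu\norm u_2^2$ between the two, and using $p+1=2+\frac4N$ (so that $\frac N2-\frac N{p+1}=\frac N{N+2}$), one obtains
\[
 \intRN\abs{\nabla u}^2\,dx \;=\; \frac{N}{N+2}\intRN\abs u^{p+1}\,dx \;+\; \frac N2\intRN h(u)u\,dx \;-\; N\intRN H(u)\,dx .
\]
Since $\half\norm{\omega_1}_2^2=m_1$ and the $L^2$-critical rescalings $\lambda^{N/2}\omega_1(\lambda\,\cdot)$ leave the $L^2$-norm unchanged, the sharp Gagliardo--Nirenberg inequality (whose extremal is $\omega_1$, for which $\norm{\nabla\omega_1}_2^2=Nm_1$ and $\norm{\omega_1}_{p+1}^{p+1}=(N+2)m_1$ by the Nehari/Pohozaev identities for $\omega_1$) gives $\intRN\abs v^{p+1}\,dx \le \frac{N+2}{N}\intRN\abs{\nabla v}^2\,dx$ for every $v\in H^1(\RN)$ with $\half\norm v_2^2=m_1$. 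Substituting this into the displayed identity yields the key inequality
\[
 0 \;\le\; \frac N2\intRN\big(h(u)u-2H(u)\big)\,dx .
\]

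Next I would read off the pointwise sign of the integrand. Since $g\in C$, the function $H$ is $C^1$, hence $\rho(s)=2H(s)/s^2$ is $C^1$ on $\R\setminus\{0\}$ and
\[
 h(s)\,s-2H(s) \;=\; \frac{s^3}{2}\,\rho'(s), \qquad s\ne 0 .
\]
By \ref{(rho-1)}{\rm(i)}, $\rho'\le 0$ on $(0,+\infty)$, so $h(s)s-2H(s)\le 0$ for $s>0$; by the additional \ref{(rho-1)}{\rm(ii)}, $\rho'\ge 0$ on $(-\infty,0)$, so (as $s^3<0$ there) $h(s)s-2H(s)\le 0$ for $s<0$ as well. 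Thus, for a \emph{positive} solution the integrand is $\le 0$ everywhere using only (i), and for an \emph{arbitrary} solution it is $\le 0$ a.e.\ using (i) and (ii). Combined with the key inequality, this forces $u(x)^3\rho'(u(x))=h(u(x))u(x)-2H(u(x))=0$ for a.e.\ $x$.

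Finally I would derive the contradiction. Suppose $u\not\equiv 0$. Being continuous, vanishing at infinity and not identically zero, $u$ attains a positive maximum or a negative minimum; say the former (the latter is symmetric, with (ii) replacing (i)), so that by the intermediate value theorem $u$ takes every value in $(0,\max u]$. On the other hand, since $\rho$ is non-increasing on $(0,+\infty)$ and strictly decreasing on $(0,s_0]$, $\rho'$ cannot vanish identically on any interval $(0,\epsilon)$ with $\epsilon\le s_0$, so the open set $\{\rho'<0\}$ accumulates at $0$; pick an open interval $I\subset\{\rho'<0\}$ with $\inf I<\max u$. Then $u^{-1}(I)$ is nonempty and open, hence of positive measure, and $u^3\rho'(u)<0$ there, contradicting the a.e.\ vanishing above. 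Hence $u\equiv 0$, which contradicts $\half\norm u_2^2=m_1>0$. This rules out positive solutions under (i), and all solutions under (i) and (ii).

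The hard part is not the algebra but the two ``soft'' ingredients: justifying regularity, boundedness and the decay $u\to 0$ at infinity for $H^1$ solutions in the presence of the $L^2$-critical power (so that Pohozaev holds), and---in the sign-changing case---ensuring that a nonzero solution genuinely takes small values of one fixed sign on a set of positive measure, which is where strict monotonicity of $\rho$ near $0$ is used; condition \ref{(rho-1)}{\rm(ii)}, in particular $g\in C(\R)$, is precisely what guarantees $h(s)s-2H(s)\le 0$ also for $s<0$. The borderline nature of the result is visible in the pure power case $h\equiv 0$: there $\rho\equiv 0$, every inequality above collapses to an equality, and indeed solutions do exist at $m=m_1$.
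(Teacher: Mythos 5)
Your proof is correct: eliminating the multiplier between the Nehari and Pohozaev identities to get $\|\nabla u\|_2^2=\tfrac N2\intRN g(u)u\,dx-N\intRN G(u)\,dx$, invoking the sharp $L^2$-critical Gagliardo--Nirenberg inequality at mass $m_1$ (i.e.\ \eqref{eq_estimate_GN}, which holds on all of $H^1(\R^N)$ with the same constant), reading the sign of $h(s)s-2H(s)=\tfrac{s^3}{2}\rho'(s)$ from the monotonicity of $\rho$, and using the strict decrease of $\rho$ near $0$ together with continuity and decay of $u$ to contradict the forced a.e.\ vanishing of the integrand. The present paper only quotes this theorem from \cite{CGIT24}, and your argument is essentially the same Pohozaev/Nehari--plus--Gagliardo--Nirenberg route used there, with the delicate points (regularity, decay, validity of the Pohozaev identity, and the strict-sign step) correctly identified and handled.
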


We refer again to \cite{CGIT24} for several comments on the assumptions, on the choice $m=m_1$ and on the statements.
As highlighted in \cref{Rem-on-g2}, since we deal with positive solutions, it is not restrictive to assume that $g$ is odd. Namely, in the following discussion, we strengthen \ref{(g0)} by: 
\begin{enumerate}[label={\rm (g\arabic*')}]
	\setcounter{enumi}{-1}
	\item \label{(g0')} $g\in C(\R)$ and $g(-s)=-g(s)$ for all $s\in\R$.
\end{enumerate}
The existence result in \cref{thm_old_exist} 
has been proved through a Lagrangian approach, introduced by Hirata and Tanaka \cite{HiTa19}.
For $m=m_1$ we consider the following Lagrangian functional:
\begin{equation}\label{eq_def-I}
	I(\lambda,u) \coloneqq \intRN \frac{1}{2} \abs{\nabla u}^2 - G(u) \, dx 
	+ e^\lambda \left( \frac{1}{2} \norm{u}_{2}^2 - m_1 \right) : \R \times H^1_r(\R^N) \to \R. 
\end{equation}
Here $H^1_r(\R^N)$ stands for the subspace of radially symmetric functions (see \eqref{eq_def_radiall_sub}). 
Notice that we are performing the identification $\mu \equiv e^{\lambda}$ to find a positive Lagrange multiplier in \eqref{eq_main}. 
We then consider the following minimax values (we give here just a rough idea of their definitions)
 \begin{equation*}
 \ub \coloneqq \inf_{\gamma \in \uGamma} \max_{0 \leq t \leq 1} I(\gamma(t)), \qquad 
 \ob \coloneqq \inf_{\gamma \in \oGamma} \sup_{ (t,\lambda) \in [0,1] \times \R } 
 I(\gamma(t, \lambda)),
 \end{equation*}
where 
 \begin{align*}
 &\uGamma \coloneqq \Big\{ \gamma \in C\big([0,1] , \R \times H^1_r(\R^N)\big) \;\Big|\; 
 \gamma(0)\in \R\times\{ 0\},\ I(\gamma(0)) \ll -1,\ 
 \gamma(1) \in \left(\mathrm{id} \times \zeta_0\right) (\R) \Big\}, 
 \\
 &\oGamma \coloneqq 
 \Big\{\gamma \in C\big( [0,1] \times \R , \R \times H^1_r(\R^N) \big) \;\Big|\; 
 \gamma(t,\lambda) = \gamma_0(t,\lambda) \ 
 \text{if $t\sim 0$ or $t\sim 1$ or $\abs{\lambda}\gg 1$} \Big\}. 
 \end{align*}
Here $\zeta_0 \in C^2(\R, H^1_r(\RN) )$ and $\gamma_0(t,\lambda)=(\lambda,t\zeta_0(\lambda))$ satisfy
\begin{itemize}
\item[(1)] $\gamma_0(0,\lambda)=(\lambda,0)$, 
$I(\gamma_0(0,\lambda))=I(\lambda,0)\leq 0$ 
for all $\lambda\in\R$;
\item[(2)] $\gamma_0(1,\lambda)=(\lambda,\zeta_0(\lambda))\in 
\R \times \{u \mid \half \norm{u}_2^2 > m_1 \}$ 
and $I(\gamma_0(1,\lambda))\ll -1
$ for all $\lambda\in\R$;
\item[(3)] $\max_{t\in [0,1]} I(\gamma_0(t,\lambda))\to 0$ 
as $\lambda\to\pm\infty$.
\end{itemize}
Notice that $\uGamma$ is a class of paths joining two points 
$\gamma(0)$ and $\gamma(1)$, 
which are inside and
outside the cylinder $\R\times\calS_0$, 
 \begin{equation} \label{eq_L2sphere}
 \calS_0 \coloneqq \left\{ u \in H^1_r(\R^N)\;\middle|\; 
 \half\intRN 
 u^2\,dx = m_1 \right\},
 \end{equation}
and satisfy $I(\gamma(0))$, $I(\gamma(1))\ll -1
$. 
On the other hand, $\oGamma$ 
is a class of
2-dimensional paths $\gamma$ such that $\gamma([0,1]\times\R)$ links with $\{\lambda\}\times \calS_0$ for each $\lambda\in\R$, and at whose boundary (including $\lambda \approx \pm\infty$) 
$I$ is non-positive.
See \cref{Section_prelim} for precise definitions.

In \cite{CGIT24} 
the inequality $\ub \leq 0 \leq \ob$ is proved, 
and hence there are three cases to consider: 
 \[ \hbox{(1)\ $\ub < 0$}, \qquad \hbox{(2)\ $0 < \ob$}, \qquad 
 \hbox{(3)\ $\ub = 0 = \ob$}. 
 \]

For case (1) (resp. case (2)), we prove that $\ub$ (resp. $\ob$) is a critical 
value of $I$, see \cref{thm_old_ex_precise} below or \cite[Sections 7 and 8]{CGIT24}; notice that, if $\ub<0<\ob$, then actually existence of two positive 
solutions happens. 
For case (3) -- where a lack of compactness occurs -- we show the existence of uncountably many solutions (see \cite[Subsection 4.3]{CGIT24}); 
notice that (3) takes place if $g(s)=\abs s^{p-1}s$, in which case all these solutions are related by scaling.
We conjecture that this is the only possible case of $g$ for which (3) holds, but this still remains an open problem.

In this paper we address several goals. The first is to find some explicit examples in such a way (1) and (2) occur, possibly at the same time.

\begin{Theorem}
\label{thm_examples}
There exist examples of $g$, satisfying \ref{(g0')}, \ref{(g1*)} with $\alpha=0$ and \ref{(g3)}, 
for each of the following cases:
\[
{\rm (i)} \; \; \ub<0<\ob \qquad {\rm (ii)} \; \; \ub=0<\ob \qquad {\rm (iii)} \; \; \ub<0=\ob \qquad {\rm (iv)} \; \; \ub=0=\ob.
\]
In particular, if {\rm (i)} occurs, then \eqref{eq_main} with $m=m_1$ admits two positive solutions in $(0,\infty)\times H^1_r(\R^N)$. 
\end{Theorem}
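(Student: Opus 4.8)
The plan is to realise all four cases by small, smooth, odd perturbations of the pure power, $g(s)=|s|^{p-1}s+h(s)$, with the primitive $H(s)=\int_0^s h$ (equivalently the function $\rho=H/(s^2/2)$ of \eqref{eq_alpha}) engineered to have a prescribed sign pattern, and to reduce the signs of $\ub$ and $\ob$ to conditions readable off this pattern. Write $E_0(u):=\intRN\tfrac12|\nabla u|^2-G(u)\,dx$, so that $I(\lambda,u)=E_0(u)+e^\lambda(\tfrac12\|u\|_2^2-m_1)$, and introduce the energy along the $L^2$-preserving rescalings of $\omega_1$,
\[
\Phi(\tau)\;:=\;E_0\big((\omega_1)_\tau\big)\;=\;-\intRN H\big((\omega_1)_\tau\big)\,dx,\qquad (\omega_1)_\tau(x):=\tau^{N/2}\omega_1(\tau x),
\]
the second equality following from the Pohozaev and Nehari identities for $\omega_1$, which at the $L^2$-critical exponent yield $\tfrac12\|\nabla\omega_1\|_2^2=\tfrac1{p+1}\|\omega_1\|_{p+1}^{p+1}$, together with the fact that $u\mapsto u_\tau$ multiplies both $\|\nabla\,\cdot\,\|_2^2$ and $\|\cdot\|_{p+1}^{p+1}$ by $\tau^2$. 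Each $(\omega_1)_\tau$ lies on $\calS_0$, and for $g(s)=|s|^{p-1}s$ one has $\Phi\equiv0$, consistently with $\ub=\ob=0$. From \ref{(g1*)} with $\alpha=0$ it follows that $\Phi(\tau)\to0$ as $\tau\to0^+$ and as $\tau\to+\infty$, while for intermediate $\tau$ the sign of $\Phi(\tau)$ is, up to a positive weight, opposite to that of $\rho$ at values of order $\tau^{N/2}$.

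The argument rests on four reductions. (a) If $H\le0$ on $[0,\infty)$ then $\ub=0$: every $\gamma\in\uGamma$ joins a point interior to $\R\times\calS_0$ to an exterior one, hence crosses $\calS_0$ at some $u$, where $I$ equals $E_0(u)=\tfrac12\|\nabla u\|_2^2-\tfrac1{p+1}\|u\|_{p+1}^{p+1}-\int H(u)\ge0$, the first two terms being nonnegative on $\calS_0$ because $m=m_1$ is the critical mass and the last because $H\le0$. (b) If $H\ge0$ on $[0,\infty)$ then $\ob=0$: for $\|h\|_\infty$ small the reference path $\gamma_0$ of \cite{CGIT24} for the pure power remains admissible for $g$, so $\uGamma$ and $\oGamma$ coincide for $g$ and for $|s|^{p-1}s$; since $I-I_{\rm pure}=-\int H(u)\,dx\le0$ pointwise, $\ob\le\ob_{\rm pure}=0$, and $\ob\ge0$ always. (c) If $\Phi(\tau_0)<0$ for some $\tau_0>0$ and $H\ge0$ on the segment of values swept by the relevant rescalings, then $\ub<0$: insert into $\uGamma$ the path that first descends in $\lambda$ with $u\equiv0$ (where $I<0$) and then follows the segment $t\mapsto t\zeta_0(\lambda_0)$ of the reference family at the scale $\lambda_0$ tuned so that its unperturbed maximum $0$ is attained exactly at $(\omega_1)_{\tau_0}$; along it $I=I_{\rm pure}-\int H(\,\cdot\,)\le0$, and strictly at the top since there $I=\Phi(\tau_0)<0$. (d) If $\Phi(\tau_1)>0$ for some $\tau_1>0$ then $\ob>0$: by the linking every $\gamma\in\oGamma$ crosses $\{\mu\}\times\calS_0$ for each $\mu\in\R$, whence $\sup I(\gamma)\ge E_0(u)$ for a crossing profile $u\in\calS_0$; as $\mu$ runs over $\R$ these profiles must, by a topological sweeping argument and the rigidity of the critical-mass minimisation (functions on $\calS_0$ of small energy are $H^1$-close, up to rescaling, to the $\omega_1$-orbit), pass near $(\omega_1)_{\tau_1}$, so $\sup I(\gamma)\gtrsim\Phi(\tau_1)>0$ uniformly in $\gamma$.

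With these reductions the examples come from elementary choices of $h$. Take $h$ smooth, odd, with $\supp h$ contained in a fixed compact subset of $\R\setminus\{0\}$ and $\|h\|_\infty$ small; then $H$ vanishes near $0$, is bounded and eventually constant, so \ref{(g0')} and \ref{(g1*)} with $\alpha=0$ hold trivially, and \ref{(g3)} holds because the pure power satisfies it with a gap of size $\tfrac2{N(N+2)}s^{p+1}$, which on the fixed compact support of $h$ (bounded away from $0$) dominates the $O(\|h\|_\infty)$ perturbation. For {\rm(iv)} take $h\equiv0$. For {\rm(iii)} take $\rho\ge0$, $\rho\not\equiv0$ — a single small positive bump: then $\ub<0$ by (c) (with $\tau_0$ any scale probing the bump, so $\Phi(\tau_0)<0$) and $\ob=0$ by (b). For {\rm(ii)} take $\rho\le0$, $\rho\not\equiv0$: then $\ub=0$ by (a) and $\ob>0$ by (d) (with $\tau_1$ a scale at which $(\omega_1)_{\tau_1}$ meets $\{\rho<0\}$ on a set of positive measure, so $\Phi(\tau_1)>0$). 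For {\rm(i)} superpose a small positive bump of $\rho$ at low values with a somewhat larger negative bump at much higher values, so that $H\ge0$ on an initial segment (with $H\not\equiv0$ there) while the negative contribution dominates at the larger scale: the path in (c), tuned to probe only the low values, gives $\ub<0$, and (d) with $\tau_1$ reaching the negative bump — where the larger magnitude of $\rho$ outweighs the positive one — gives $\ob>0$. The final assertion is then immediate from \cite{CGIT24}: when $\ub<0<\ob$ both $\ub$ and $\ob$ are critical values of $I$ and are distinct, giving two positive solutions.

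The main obstacle is reduction (d). Whereas crossing $\calS_0$ forces $I\ge0$ pointwise on the $\ub$-side, bounding $\ob$ from below by a fixed positive constant requires excluding $2$-dimensional paths that ``cheat'' by crossing $\calS_0$ only near vanishing or concentrating $\omega_1$-profiles, where $\Phi\to0$; this needs the refined linking (a crossing for each $\mu$), the quantitative stability of the critical-mass Gagliardo--Nirenberg extremals, and a degree/continuity argument ensuring that the crossing scales sweep all of $(0,\infty)$. A secondary point is that in case {\rm(i)} one must check that the two bumps do not interfere: the separation of scales guarantees that the minimising path on the $\ub$-side sees only the positive bump and that a single favourable large scale suffices on the $\ob$-side, so both requirements hold simultaneously. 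One should also verify that the reference path of \cite{CGIT24} can be taken admissible — and ``flat'', with unperturbed supremum $0$ — uniformly for $g$ in the admissible class, which is where the smallness of $\|h\|_\infty$ enters.
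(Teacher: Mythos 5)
Your reductions (a) and (c) are sound and are essentially the paper's own mechanism for the sign of $\ub$: if $G\le G_0$ then $\calI\ge 0$ on $\calS_0$ by \eqref{eq_estimate_GN}, so $\ub=0$; if $G\ge G_0$ with strict inequality on the range of some $\omega_\mu$, the ray through $\omega_\mu$ gives $a(\mu)<\mu m_1$, i.e. $b(\lambda)<0$, hence $\ub<0$ by \eqref{eq_b_blambda} (this is \cref{prop_b_neg_post}). Reduction (b) is acceptable modulo the uniformity of the reference family $\zeta_0$, which can be arranged as in \cref{Lem_zeta1rho1}. The genuine gap is reduction (d), on which your cases (ii) and (i) entirely rest. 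As stated -- positivity of $\calI$ at a \emph{single} mass-preserving rescaling of $\omega_1$ forces $\ob>0$ -- it is not proved: you only list the ingredients that would be needed (a quantitative stability theorem for the $L^2$-critical Gagliardo--Nirenberg extremals, a continuous selection of a ``scale'' for the crossing profiles of an arbitrary admissible two-dimensional path, and a degree/sweeping argument showing these scales hit the chosen $\tau_1$), and none of these is available in the paper or supplied by you. The crossing set of a slice need not be connected, nor its scale single-valued, so even granting GN stability the sweeping step is a substantial theorem rather than a remark; and when $H$ changes sign (your case (i)) it is not even clear that the claim is true, since positivity of the energy of one test function on $\calS_0$ does not by itself bound any minimax level from below -- the paper controls $\ob$ from below only through the fixed-frequency level, via $\ub\le b(\lambda)\le\ob$.

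The paper avoids all of this by proving $b(\lambda)>0$ for a suitable $\lambda$. When $G\le G_0$ (your case (ii)) it takes the ground state $w_\mu$ of the fixed-frequency problem for large $\mu$, whose sup-norm blows up by \cref{lem_asympt_behav} so that it samples the region where $G<G_0$, runs the optimal path of \cref{prop_paths_f} through $w_\mu$, and compares with $\Psi_{0\mu}$ along that path to get $a(\mu)>\mu m_1$, i.e. $b(\lambda)>0$, whence $\ob>0$. For the mixed case $\ub<0<\ob$ it does not use any analogue of (d) either: it builds $H(s)=\frac{a(s)}{p+1}|s|^{p+1}$ with $a\equiv\alpha_1<0$ and $a\equiv\alpha_2>0$ on two widely separated amplitude windows, proves the convergence \cref{lem_Leps_alpha} for the mountain-pass levels, and uses the scaling $u\mapsto \mu^{N/4}u(\mu^{1/2}\cdot)$ to translate the frequency $\lambda$ into an amplitude window, producing $\lambda_1,\lambda_2$ with $b(\lambda_1)<0<b(\lambda_2)$ and concluding again through \eqref{eq_b_blambda}. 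If you wish to keep your scheme, the quickest repair is to replace (d) by this $b(\lambda)$-based comparison; as written, cases (ii) and (i) of your proposal are not established.
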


The second goal of this paper is to address a fundamental question on normalized solutions, that is the existence of a solution of minimal energy. 
Namely, setting the energy functional 
\begin{equation}\label{eq_def_energyf}
	\calI(u) \coloneqq \intRN \frac{1}{2} \abs{\nabla u}^2 - G(u) \, dx : H^1_r(\RN) \to \R, 
\end{equation}
we study the minimization 
problem over $\calS_0$ (see \eqref{eq_L2sphere}), that is
 \begin{equation} \label{eq_min_L2} 
 d \coloneqq \inf \big\{ \calI(u) \mid u \in \calS_0 \big\}.
 \end{equation}

\begin{Theorem}
\label{thm_exists_min} 
Assume \ref{(g0')} 
and \ref{(g1*)}.
Then
\begin{enumerate}[label={\rm (\roman*)}]
	\item 	$d= \ub$ holds;
	
	\item 	suppose $\alpha = 0$ in \ref{(g1*)} 
	and $d<0$; 
	then $d$ is attained and a minimizer of \eqref{eq_min_L2} exists, which is positive and radially symmetric.
\end{enumerate}
\end{Theorem}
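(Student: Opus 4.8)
I would treat the two assertions separately — $(i)$ by comparing the constrained infimum $d$ with the min-max value $\ub$ from both sides, and $(ii)$ by the direct method on $\calS_0$, crucially exploiting $\alpha=0$ together with $d<0$.

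\emph{Part (i).} That $\ub\geq d$ is easy: every $\gamma\in\uGamma$ joins a point of $\R\times\{0\}$, where $\half\norm{\cdot}_2^2=0<m_1$, to a point of $(\mathrm{id}\times\zeta_0)(\R)$, where $\half\norm{\cdot}_2^2>m_1$ by property~(2); so, writing $\gamma(t)=(\lambda(t),v(t))$, the continuous map $t\mapsto\half\norm{v(t)}_2^2$ hits $m_1$ at some $t^{\ast}$, whence $v(t^{\ast})\in\calS_0$ and, by the definition of $I$, $I(\gamma(t^{\ast}))=\calI(v(t^{\ast}))\geq d$, giving $\sup_tI(\gamma)\geq d$ and $\ub\geq d$. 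For $\ub\leq d$ I would use that $\ub\leq0$ is known from \cite{CGIT24}; with $\ub\geq d$ this forces $d\leq0$, and if $d=0$ we are done, so assume $d<0$ (possibly $d=-\infty$). Given $u\in\calS_0$ with $\calI(u)<0$ and $\e>0$, I would build $\gamma\in\uGamma$ with $\max I(\gamma)\leq\calI(u)+\e$ by concatenating: (a) the ray $s\mapsto(\Lambda,su)$, $s\in[0,1]$, with $\Lambda\gg1$, where the penalty $e^{\Lambda}m_1(s^2-1)\leq0$ and is hugely negative away from $s=1$ while $\calI(su)\to\calI(u)$ as $s\to1$, so also $I(\Lambda,0)=-e^{\Lambda}m_1\ll-1$; (b) the segment $\lambda\mapsto(\lambda,u)$, along which $I\equiv\calI(u)$ since $u\in\calS_0$; (c) the dilation curve $t\mapsto(\lambda(t),u(\cdot/t))$, $t\geq1$, with $\lambda(t)\to-\infty$. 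The key point in (c) is that $\calI(u)<0$ forces $\intRN G(u)\,dx=\half\norm{\nabla u}_2^2-\calI(u)>0$, so
\[
t\mapsto\calI(u(\cdot/t))=\tfrac{t^{N-2}}{2}\norm{\nabla u}_2^2-t^{N}\intRN G(u)\,dx
\]
is non-increasing on $[1,\infty)$ — its derivative at $t=1$ is $-\norm{\nabla u}_2^2+N\calI(u)<0$ — and tends to $-\infty$, while $\lambda(t)\to-\infty$ keeps the now-positive penalty below $\e$. The endpoint is a large dilation of $u$ with $I\ll0$ and mass $\gg2m_1$; by the precise definition of $\uGamma$ and the construction of $\zeta_0$ in \cref{Section_prelim} (following \cite{CGIT24}), this path — if needed, prolonged by a short curve in a deep sublevel set — lies in $\uGamma$. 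Letting $\calI(u)\downarrow d$ along $\calS_0$ and $\e\downarrow0$ yields $\ub\leq d$.

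\emph{Part (ii).} Fix $\alpha=0$, $d<0$, and a minimizing sequence $u_n\in\calS_0$, $\calI(u_n)\to d$. The main step is $H^1$-boundedness: since $\norm{u_n}_2^2=2m_1$ is fixed, one must bound $a_n\coloneqq\norm{\nabla u_n}_2$. If $a_n\to\infty$ along a subsequence, set $w_n(x)\coloneqq a_n^{-N/2}u_n(x/a_n)$, which preserves the mass, has $\norm{\nabla w_n}_2=1$ and, $p$ being $L^2$-critical, satisfies $\norm{u_n}_{p+1}^{p+1}=a_n^{2}\norm{w_n}_{p+1}^{p+1}$, so
\[
\calI(u_n)=a_n^{2}\Big(\half-\tfrac{1}{p+1}\norm{w_n}_{p+1}^{p+1}\Big)-\intRN H(u_n)\,dx .
\]
Since $\omega_1$ optimizes Gagliardo--Nirenberg, on $\calS_0$ one has $\tfrac{1}{p+1}\norm{v}_{p+1}^{p+1}\leq\half\norm{\nabla v}_2^2$, so the bracket is $\geq0$; and $\intRN H(u_n)\,dx$ is bounded because $\alpha=0$ makes $\rho$ bounded ($H(s)=\half\rho(s)s^2$). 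Hence $\calI(u_n)$ bounded forces $\norm{w_n}_{p+1}^{p+1}\to\tfrac{p+1}{2}$, i.e.\ $(w_n)$ is a Gagliardo--Nirenberg maximizing sequence with $\norm{\nabla w_n}_2=1$, $\norm{w_n}_2^2=2m_1$; by rigidity of the optimizers (in the radial class) $w_n\to\wh{\omega}$ strongly in $H^1_r(\RN)$, $\wh{\omega}$ a positive rescaling of $\omega_1$. With $y=a_nx$,
\[
\intRN H(u_n)\,dx=\half\intRN\rho\big(a_n^{N/2}w_n(y)\big)\,w_n(y)^2\,dy\longrightarrow0
\]
by dominated convergence ($a_n^{N/2}w_n(y)\to+\infty$ a.e., $\rho(s)\to0$ as $s\to\infty$, $w_n^2\to\wh{\omega}^2$ in $L^1$), so $\liminf_n\calI(u_n)\geq0>d$ — a contradiction; thus $(u_n)$ is bounded. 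Up to a subsequence $u_n\rightharpoonup u$ in $H^1_r(\RN)$ and $u_n\to u$ in $L^{p+1}(\RN)$ and a.e. Using $|h(s)|\leq\e|s|+C_\e|s|^p$ and Hölder, $\intRN H(u_n)\,dx\to\intRN H(u)\,dx$, and weak lower semicontinuity of $\norm{\nabla\cdot}_2^2$ gives $\calI(u)\leq d$. Then $u\not\equiv0$ (else $\calI(u)=0>d$), and if $0<\half\norm{u}_2^2=:m'<m_1$ then $\wh{u}\coloneqq u(\cdot/\kappa)$ with $\kappa\coloneqq(m_1/m')^{1/N}>1$ lies in $\calS_0$ and, using $\intRN G(u)\,dx>\half\norm{\nabla u}_2^2>0$, satisfies $\calI(\wh{u})-\calI(u)<\half\norm{\nabla u}_2^2\,\kappa^{N-2}(1-\kappa^2)<0$, forcing $d\leq\calI(\wh{u})<\calI(u)\leq d$ — absurd. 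Hence $u\in\calS_0$, $\calI(u)=d$, $u$ is a minimizer (and $u_n\to u$ in $H^1$). Replacing $u$ by $|u|$ (legitimate, as $g$ odd $\Rightarrow G$ even) makes $u\geq0$; as a constrained minimizer $u$ solves $-\Delta u+\mu u=g(u)$ for a multiplier $\mu\in\R$, and the Pohozaev identity with $\half\norm{u}_2^2=m_1$ gives $d=\calI(u)=\tfrac1N\norm{\nabla u}_2^2-\mu m_1$, so $\mu=\tfrac{1}{m_1}\big(\tfrac1N\norm{\nabla u}_2^2-d\big)>0$; elliptic regularity and the strong maximum principle (valid since $\mu>0$) then make $u$ a positive, radially symmetric classical solution, hence the desired minimizer of \eqref{eq_min_L2}.

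\emph{Main obstacle.} The hard part is the $H^1$-boundedness in Part~(ii): at $\alpha=0$ the nonlinearity is $L^2$-critical at both $0$ and $\infty$ and $\calI$ fails to be coercive on $\calS_0$, so one must combine $\alpha=0$ with $d<0$ — via the concentration rescaling above and the vanishing of $\intRN H$ on concentrating profiles — to exclude gradient blow-up; the remaining delicate point is closing the comparison path of Part~(i) into $\uGamma$, which rests on the explicit construction of $\zeta_0$ in \cite{CGIT24}.
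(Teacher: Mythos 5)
Your part (ii) is correct but follows a genuinely different route from the paper. The paper deduces (ii) in two lines: once (i) gives $\ub = d < 0$, \cref{thm_old_ex_precise} (i.e.\ the deformation/compactness machinery of \cite{CGIT24}, cf.\ \cref{prop_compact_Kb}) produces a positive radial critical point of $I$ at level $\ub=d$, which lies on $\calS_0$ because $\partial_\lambda I=0$, hence is a minimizer. Your direct method on $\calS_0$ (the rescaling $w_n=a_n^{-N/2}u_n(\cdot/a_n)$, rigidity of the sharp Gagliardo--Nirenberg optimizers to force $w_n\to\wh{\omega}$, generalized dominated convergence to kill $\int_{\RN} H(u_n)\,dx$, the dilation $u(\cdot/\kappa)$ to repair a mass defect, and Pohozaev to get $\mu>0$) is self-contained and sound, at the price of invoking the equality case of sharp Gagliardo--Nirenberg, which the paper never needs; it essentially re-proves compactness that \cite{CGIT24} already supplies. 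Your lower bound $\ub\geq d$ in (i) is exactly the paper's \eqref{eq_b_d_geq}.

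The upper bound $\ub\leq d$ is where your argument has a genuine gap. Your pieces (a)--(c) do keep $I\leq \calI(u)+\e$ (and the use of the two-dimensional freedom in $\lambda$ is essential there: at fixed $\mu$ the same segments would not control the functional), but admissibility in $\uGamma$ requires $\gamma(1)\in(\mathrm{id}\times\zeta_0)(\R)$ exactly, and your path terminates at $(\lambda(T),u(\cdot/T))$. The ``short curve in a deep sublevel set'' joining this point to the curve $\lambda\mapsto(\lambda,\zeta_0(\lambda))$ while keeping $I<-2\bar{A}m_1-1$ is precisely the nontrivial step, and you do not construct it: it requires either path-connectedness of deep sublevel sets of $\Psi_\mu$ or an explicit construction exploiting the form of $\zeta_0$ from \cite{CGIT24}, neither of which is available as a citable statement here. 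The paper avoids this entirely: it proves $\ub\leq d$ by combining the inequality $\ub\leq b(\lambda)$ of \eqref{eq_b_blambda} (already established in \cite{CGIT24}) with $b(\lambda)=a(\mu)-\mu m_1$ from \eqref{eq_amu_blambda} and the optimal-path \cref{prop_paths_f}, choosing $\mu_0$ so that $u_0$ satisfies the Pohozaev identity \eqref{eq_dim_ub_d+_pohoz}; since for $N=2$ that proposition needs \eqref{eq_paths_g_2}, the paper passes through $d_+$ and proves $d_+=d$ in \cref{lem_d+_d}. Your scheme, if the terminal matching were carried out, would bypass this $N=2$ detour, which is an attractive feature; but as written the closing of the path into $\uGamma$ is missing, so part (i) is not complete.
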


As a consequence of 
 \cref{thm_exists_min} (ii), we obtain the following result. 

\begin{Corollary}\label{cor_exists_min}
Let $g$ satisfy \ref{(g0')} and \ref{(g1*)} with $\alpha=0$. Assume also that 
\[
G(s) \geq \frac{1}{p+1}|s|^p \quad \text{for each $s \in [0,\infty)$}, 
\]
where the inequality is strict in at least one point. 
Then a minimizer of \eqref{eq_min_L2} exists, which is positive and radially symmetric.
\end{Corollary}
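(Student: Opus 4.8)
The plan is to deduce the statement from \cref{thm_exists_min} (ii). Since $g$ satisfies \ref{(g0')} and \ref{(g1*)} with $\alpha=0$, that theorem produces a positive radially symmetric minimizer of \eqref{eq_min_L2} as soon as $d<0$; hence it suffices to prove that the extra lower bound assumed on $G$ forces $d<0$.

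To build a negative--energy competitor in $\calS_0$ I would rescale $\omega_1$ in an $L^2$--preserving way: for $t>0$ set $u_t(x)\coloneqq t^{N/2}\omega_1(tx)$, so that $\norm{u_t}_2=\norm{\omega_1}_2$ and hence $u_t\in\calS_0$ for every $t>0$. Here $u_t\in H^1_r(\R^N)$ and $\calI(u_t)$ is well defined, since \ref{(g1*)} with $\alpha=0$ gives $\abs{h(s)}\lesssim\abs s^{p}+\abs s$, hence $\abs{H(s)}\lesssim\abs s^{p+1}+\abs s^{2}$, while $\omega_1\in H^1(\R^N)\cap L^{p+1}(\R^N)$. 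A change of variables gives $\norm{\nabla u_t}_2^2=t^{2}\norm{\nabla\omega_1}_2^2$, $\norm{u_t}_{p+1}^{p+1}=t^{2}\norm{\omega_1}_{p+1}^{p+1}$ (here the identity $(p+1)\tfrac N2=N+2$ is used), and $\int_{\R^N}H(u_t)\,dx=t^{-N}\int_{\R^N}H\big(t^{N/2}\omega_1(x)\big)\,dx$. By the Nehari and Pohozaev identities for $\omega_1$, together with $p-1=\tfrac4N$, one has $\tfrac12\norm{\nabla\omega_1}_2^2=\tfrac{1}{p+1}\norm{\omega_1}_{p+1}^{p+1}$, so the pure--power part of $\calI(u_t)$ cancels and
\[
\calI(u_t)=\tfrac12\norm{\nabla u_t}_2^2-\tfrac{1}{p+1}\norm{u_t}_{p+1}^{p+1}-\int_{\R^N}H(u_t)\,dx=-\,t^{-N}\int_{\R^N}H\big(t^{N/2}\omega_1(x)\big)\,dx .
\]

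It remains to make the last integral strictly positive for a suitable $t$. Recalling $G=\tfrac{1}{p+1}\abs{\cdot}^{p+1}+H$, the hypothesis on $G$ is exactly $H\geq0$ on $[0,\infty)$, hence --- by \ref{(g0')}, which makes $H$ even --- $H\geq0$ on all of $\R$; strictness at one point and continuity of $H$ yield $s_*>0$ and an open interval $J\ni s_*$ on which $H>0$. Since $\omega_1$ is continuous and positive with $\omega_1(x)\to0$ as $\abs x\to\infty$, for $t$ large enough the set $\{x\in\R^N: t^{N/2}\omega_1(x)\in J\}$ is nonempty and open, hence of positive Lebesgue measure; combined with $H\geq0$ everywhere, this gives $\int_{\R^N}H\big(t^{N/2}\omega_1(x)\big)\,dx>0$. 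Therefore $d\leq\calI(u_t)<0$ for such $t$, and \cref{thm_exists_min} (ii) yields the conclusion.

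The computations involved are routine; the two points needing care are the vanishing of the pure--power part of $\calI(u_t)$ --- which rests on the Nehari/Pohozaev relations specific to the mass--critical exponent $p=1+\tfrac4N$ --- and the simultaneous finiteness and strict positivity of $\int_{\R^N}H(t^{N/2}\omega_1)$. I expect the latter to be the main (though mild) obstacle: finiteness follows from the two--sided growth control on $h$ provided by \ref{(g1*)}, whereas excluding cancellation is precisely where the global sign condition $H\geq0$ --- and not merely nonnegativity near the origin or at a single point --- is needed.
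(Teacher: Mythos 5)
Your proof is correct, but it takes a genuinely different route from the paper's. The paper deduces the corollary from \cref{prop_b_neg_post} (i) (which shows $b(\lambda)<0$ for large $\lambda$, hence $\ub<0=\ob$, via ground states $w_\mu$ of the fixed-frequency problem, \cref{lem_asympt_behav} and the optimal-path construction of \cref{prop_paths_f}) combined with \cref{thm_exists_min} (i) ($d=\ub$), and only then invokes \cref{thm_exists_min} (ii). You instead bypass both $\ub$ and \cref{prop_b_neg_post} entirely and prove $d<0$ directly by exhibiting a competitor in $\calS_0$: your $u_t=t^{N/2}\omega_1(t\cdot)$ is exactly $\omega_\mu$ with $\mu=t^2$ (see \eqref{eq_omegamu}), the pure-power part of $\calI(u_t)$ vanishes by the mass-critical identities \eqref{eq_scaling_omega} (equivalently $\calI_0(\omega_\mu)=0$), so $\calI(u_t)=-\intRN H(u_t)\,dx$, which is strictly negative for large $t$ since $H\ge 0$ with $H>0$ on an open interval reached by the range of $t^{N/2}\omega_1$; finiteness is covered by $|H(s)|\le \bar A s^2$. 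This is the same device the paper itself uses in the proof of \cref{thm_nonexis_min}, just transplanted here. What each approach buys: the paper's route yields the extra information $\ub<0=\ob$ on both minimax levels (used elsewhere, e.g.\ in \cref{Rem:Md}), while yours is more elementary and self-contained, needing only \cref{thm_exists_min} (ii) and no ground-state asymptotics or path surgery. One cosmetic point: you silently read the hypothesis as $G(s)\ge \frac{1}{p+1}|s|^{p+1}$ (i.e.\ $H\ge 0$ on $[0,\infty)$), which is indeed the intended meaning of the statement (the exponent $p$ there is a typo), and your remark that global nonnegativity of $H$ is what rules out cancellation in $\intRN H(u_t)\,dx$ is exactly the right caveat.
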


\begin{Remark}\label{rem_odd_ext_minim}
We observe that, in \cref{cor_exists_min}, one may relax \ref{(g0')} to \ref{(g0)} by working with the odd extension of $g$, obtaining the following fact: let $g$ satisfy \ref{(g0)} and \ref{(g1*)} with $\alpha=0$, and $G(s) \geq \frac{1}{p+1}|s|^p$ for each $s \geq 0$, strict in at least one point; then there exists a positive, radially symmetric solution to \eqref{eq_main} which also minimizes
\[\inf \big\{ \calI(u) \mid u \in \calS_0, \; u \geq 0 \big\}.\]
Such a solution is found as minimizer of \eqref{eq_min_L2} where $G$ is substituted by its even extension.
\end{Remark}

Contrary to \cref{thm_exists_min}, we show that the negativity of $d$ is not a sufficient condition. Namely, the $L^2$-minimum could not be attained 
if \ref{(g1*)} holds with $\alpha > 0$; 
notice that this, together with \cref{thm_old_nonexist}, partially completes the analysis of the case $\alpha \neq 0$.
 
 \begin{Theorem}
\label{thm_nonexis_min} 
	Let $N\geq 2$ and assume \ref{(g0)} and \ref{(rho+1)}. 
		Then $d = -\alpha m_1 <0$, where $d$ is defined in \eqref{eq_min_L2}. 
		However, $d$ is not attained.
\end{Theorem}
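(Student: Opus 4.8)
The plan is to compute the value $d$ first and then show the infimum cannot be realized, exploiting the scaling structure of the $L^2$-critical problem. For the value, I would start from \cref{thm_exists_min}~(i), which already gives $d=\ub$, so it suffices to evaluate $\ub$ under \ref{(rho+1)}; alternatively, one can argue directly with a minimizing sequence. Under \ref{(g1*)} with $\alpha>0$, write $\calI(u)=\frac12\norm{\nabla u}_2^2-\frac{1}{p+1}\norm{u}_{p+1}^{p+1}-\intRN H(u)\,dx$. For $u\in\calS_0$ use the $L^2$-critical scaling $u_t(x)\coloneqq t^{N/2}u(tx)$, which preserves the mass constraint, so that $\frac12\norm{\nabla u_t}_2^2=\frac{t^2}{2}\norm{\nabla u}_2^2$, $\frac{1}{p+1}\norm{u_t}_{p+1}^{p+1}=\frac{t^2}{p+1}\norm{u}_{p+1}^{p+1}$ (since $p+1=2+4/N$ makes this term also scale like $t^2$), and $\intRN H(u_t)\,dx=t^{-N}\intRN H(t^{N/2}u)\,dx$. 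The first two terms combine into $t^2\big(\frac12\norm{\nabla u}_2^2-\frac{1}{p+1}\norm{u}_{p+1}^{p+1}\big)$; by the optimal Gagliardo--Nirenberg inequality (with $\omega_1$ as extremal and $m=m_1$ the critical mass) this coefficient is nonnegative, and vanishes exactly on the $\omega_1$-orbit. For the $H$-term, \ref{(g1*)} with $\alpha>0$ gives $H(s)/\tfrac{s^2}{2}=\rho(s)\to\alpha$ as $s\to\infty$, so $t^{-N}\intRN H(t^{N/2}u)\,dx\to \alpha\cdot\frac12\norm{u}_2^2=\alpha m_1$ as $t\to\infty$ (justified by pointwise convergence plus a domination argument, since $\rho<\alpha$ and $\rho$ is bounded near $0$ by \ref{(g1*)}). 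Hence $\limsup_{t\to\infty}\calI(u_t)\le -\alpha m_1$, giving $d\le-\alpha m_1$. For the reverse inequality, use $H(s)\le\alpha\tfrac{s^2}{2}$ for all $s$ (which follows from $\rho(s)<\alpha$, i.e.\ \ref{(rho+1)}) together with the Gagliardo--Nirenberg inequality to bound $\calI(u)\ge 0-\alpha m_1=-\alpha m_1$ for every $u\in\calS_0$. Thus $d=-\alpha m_1<0$.

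The non-attainment is the heart of the statement. Suppose $u\in\calS_0$ with $\calI(u)=d=-\alpha m_1$. Then equality must hold in both inequalities used above: $\frac12\norm{\nabla u}_2^2=\frac{1}{p+1}\norm{u}_{p+1}^{p+1}$ (equality in Gagliardo--Nirenberg at the critical mass, forcing $u$ to be, up to translation and $L^2$-normalization, the soliton $\omega_1$), and $\intRN\big(\alpha\tfrac{u^2}{2}-H(u)\big)\,dx=0$. But the integrand $\alpha\tfrac{s^2}{2}-H(s)=\tfrac{s^2}{2}(\alpha-\rho(s))$ is strictly positive for every $s\neq 0$ by \ref{(rho+1)}, and $u=\omega_1>0$ everywhere on $\R^N$, so the integral is strictly positive --- a contradiction. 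Hence $d$ is not attained. (One should note that the radial extremals of Gagliardo--Nirenberg at mass $m_1$ are exactly $\{\omega_1(\cdot-y):y\in\R^N\}$ up to sign; working in $H^1_r$ or invoking radial symmetry of minimizers fixes $y=0$, but even without that the argument only uses $u\neq 0$ a.e., which holds since the soliton never vanishes.)

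The main obstacle I anticipate is making the limit $t^{-N}\intRN H(t^{N/2}u)\,dx\to\alpha m_1$ rigorous: one needs a dominated-convergence argument valid for all $u\in\calS_0$ (not just bounded or compactly supported ones), controlling $H(s)$ both near $s=0$ (where \ref{(g1*)} gives $H(s)=o(\abs{s}^{p+1})$, harmless after integration against an $L^{p+1}$ function) and for large $s$ (where $\rho(s)<\alpha$ bounds $\abs{H(s)}\le\abs\alpha\tfrac{s^2}{2}$, integrable since $u\in L^2$). A clean way is to split $H(s)=\alpha\tfrac{s^2}{2}+\big(H(s)-\alpha\tfrac{s^2}{2}\big)$, so $\intRN H(t^{N/2}u)t^{-N}\,dx=\alpha m_1+\intRN\big(\rho(t^{N/2}u)-\alpha\big)\tfrac{u^2}{2}\,dx$, and then apply dominated convergence to the last term using $\abs{\rho-\alpha}$ bounded (by \ref{(g1*)}, $\rho$ is bounded on $[0,\infty)$) and $\rho(t^{N/2}u(x))\to\alpha$ for a.e.\ $x$ with $u(x)\neq 0$. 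The rest is bookkeeping with the scaling identities and the equality analysis in Gagliardo--Nirenberg, which is standard once the characterization of extremals at the critical mass is invoked.
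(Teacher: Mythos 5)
Your proposal is correct and follows essentially the same route as the paper: mass-preserving rescalings of the soliton (your $u_t=t^{N/2}\omega_1(t\,\cdot)=\omega_{t^2}$ are exactly the paper's trial functions $\omega_\mu$ with $\mu\to\infty$) plus dominated convergence give $d\le-\alpha m_1$, the bound $H(s)\le\frac{\alpha}{2}s^2$ from \ref{(rho+1)} combined with \eqref{eq_estimate_GN} gives $d\ge-\alpha m_1$, and non-attainment follows from the strict inequality $H(s)<\frac{\alpha}{2}s^2$ for $s\ne0$. Two minor points: your claim $\limsup_{t\to\infty}\calI(u_t)\le-\alpha m_1$ is only valid when the Gagliardo--Nirenberg coefficient vanishes (so you should simply take $u=\omega_1$, which is all the upper bound needs), and in the non-attainment step the characterization of Gagliardo--Nirenberg extremals is superfluous, since $\calI_0(u)\ge0$ together with $u\not\equiv0$ (automatic from $u\in\calS_0$) already forces $\intRN\big(\frac{\alpha}{2}u^2-H(u)\big)\,dx\le 0$, contradicting the strict pointwise inequality, exactly as in the paper.
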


\begin{Remark}\label{rem_nonexistenc_result}
We observe the following facts. 
\begin{enumerate}[label={\rm (\roman*)}]
\item 
Differently from \cref{thm_exists_min}, in \cref{thm_nonexis_min} we do not require $g$ to be odd, nor we need
the second limit in \ref{(g1*)} to be satisfied also at $-\infty$. 
\item If condition \ref{(rho+1)} is required only for $s >0$, then in place of \cref{thm_nonexis_min} one may obtain that there exists no minimizer for $\inf \big\{ \calI(u) \mid u \in \calS_0, \; u \geq 0 \big\}$.
\item 
In \cref{thm_nonexis_min}, 
we do not exclude the possibility that \eqref{eq_main} with $m=m_1$ has a solution that is not a minimizer of $\calI$ on $\mc{S}_0$. 
Such an existence remains an open problem.
\end{enumerate}
\end{Remark}

Before presenting the last main result of this paper, let us recall by \cite{CGIT24} that
\begin{equation}\label{eq_three_b}
\ub \leq b(\lambda) \leq \ob \quad \hbox{for each $\lambda \in \R$},
\end{equation}
where $b(\lambda)$ is the ground state level of the fixed frequency problem $u \mapsto I(\lambda, u)$ (see \eqref{eq_def_blambda}).

\begin{Remark}\label{rem_legendre_transf}
By \cite[Corollary 7.5]{CGIT24} we have 
that 
\begin{equation}\label{eq_lagr_first}
\ub = \inf_{\lambda\in\R} b(\lambda).
\end{equation}
In light of \cref{thm_exists_min}, 
and by making explicit the dependence of $d$ on $m_1$, we can rewrite \eqref{eq_lagr_first} as (recall $\mu=e^{\lambda}$)
\[d(m_1) = \inf_{\mu \in (0,\infty)} \big( a(\mu) - \mu m_1\big) \]
where $a(\mu)$ is the ground state level of the fixed frequency action $u \mapsto \intRN \frac{1}{2} \abs{\nabla u}^2 - G(u) \, dx + \frac{\mu}{2} \norm{u}_{2}^2 $ (see \eqref{eq_def_amu}).
Such a \emph{Legendre transform} type relation was proved in Dovetta, Serra and Tilli \cite{DST23} for the power type nonlinearity including 
$L^2$-subcritical, $L^2$-critical and $L^2$-supercritical cases. 
Though it is not explicitly written, the same formula is also obtained in \cite[Proposition 5.7]{HiTa19}. 
See also \cite
{GaSc25}.
\end{Remark}

In \cref{rem_legendre_transf} we saw the role of the infimum of $b(\lambda)$; in what follows, we will see that also the supremum 
\begin{equation}\label{eq_def_btilde}
\tilde{b}\coloneqq \sup_{\lambda \in\R} b(\lambda)
\end{equation}
plays a role.
The third goal is indeed to study the case $\ob > 0$ where \ref{(g2)} is of key importance in finding 
a positive solution 
(see \cref{prop_compact_Kb} and \cref{thm_old_ex_precise} below). Here we remark that according to \cite{CGIT24} 
(see also \cref{Section_prelim}), 
the value $\ob > 0$ can be defined under \ref{(g0)} and \ref{(g1*)}. However, \ref{(g2)} is not simple to verify in general for $N \geq 5$. 
As described in \cref{Rem-val-g2}, a simple condition to check \ref{(g2)} is \ref{(g3)}. 
In \cref{Thm:Exw/og2} below, we show that the criticality of the level 
$\ob > 0$ 
is stable under some perturbations of $g$. 
Via this result, which has an interest on its own, we may find an example of $g$ for which \eqref{eq_main} admits a solution, however \ref{(g3)} does not hold. 

To state \cref{Thm:Exw/og2}, we prepare some notations. To make the dependency of $\ob$, $\tilde{b}$ and $b(\lambda)$ on $G$ clearer, 
the notation $\ob_G$, $\tilde{b}_G$ and $b_G(\lambda)$ is used here. For a class of perturbations, we consider the following space 
of functions $X$ defined by 
\begin{equation}\label{def:X}
	X \coloneqq \left\{ \Xi \in C^1(\R) \;\middle|\; \Xi(0) =0, \ \text{$\Xi$ is even}, \ \lim_{s \to 0} \frac{ \Xi'(s) }{|s|^{p-1}s} = 0 = \lim_{|s| \to \infty} \frac{\Xi'(s)}{|s|} \right\}	
\end{equation}
and a norm on $X$ defined by 
\begin{equation}\label{def:normX}
	\| \Xi \|_{ X } \coloneq \sup_{ 0 < |s| \leq 1 } \frac{ |\Xi'(s) | }{|s|^p} + \sup_{|s| \geq 1} \frac{|\Xi'(s)|}{|s|}. 
\end{equation}
In the following \cref{Thm:Exw/og2} we will consider a fixed function $g_1$ with some assumptions, 
and a perturbation $\xi=\Xi'$, $\Xi \in X$, so that the desired example will be given by $g\coloneqq g_1+\xi$.
Since the continuous dependence of 
	$\Xi \in X \mapsto \ob_{G_1+\Xi}$
-- useful to ensure that 
$\ob_{G_1}>0$ implies $\ob_{G_1+\Xi}>0$
for $\Xi$ small -- is not totally clear, we will work under the stronger condition 
$\sup_{\lambda \in \R} b_{G_1}(\lambda)>0$
(see indeed \eqref{eq_three_b}).

\begin{Theorem}\label{Thm:Exw/og2}
	Assume that $g_1$ satisfies \ref{(g0')}, \ref{(g1*)} with $\alpha = 0$, \ref{(g2)} and 
\begin{equation}\label{eq_extra_blambda}
\tilde{b}_{G_{1}}
= \sup_{\lambda \in\R} b_{G_{1}}
(\lambda)>0.
\end{equation}
	Then 
	there exists $\rho_1 = \rho_1(g_1
	) > 0$ 
	 such that,
	for any $\Xi \in X$ with $\| \Xi \|_X < \rho_1$, 
	problem \eqref{eq_main} with $g=g_1 + \Xi'$ admits a positive solution corresponding to $\ob_{G_1+\Xi} > 0 $. 
\end{Theorem}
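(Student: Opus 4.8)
The plan is to use the variational characterization of $\ob$ as a minimax value over the class $\oGamma$, together with a compactness criterion for the associated critical set, and to track how these objects vary when $G_1$ is perturbed to $G_1+\Xi$. The decisive point is that the hypothesis $\tilde b_{G_1}>0$ is an \emph{open} condition: since $b_G(\lambda)$ depends continuously on $G$ (uniformly on compact $\lambda$-intervals, via the norm $\|\cdot\|_X$) and $b_G(\lambda)\to 0$ as $|\lambda|\to\infty$ with enough uniformity, the quantity $\sup_\lambda b_G(\lambda)$ is lower semicontinuous in $\Xi$. Hence there exists $\rho_1>0$ such that $\|\Xi\|_X<\rho_1$ forces $\tilde b_{G_1+\Xi}>0$, and by \eqref{eq_three_b} also $\ob_{G_1+\Xi}>0$. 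This is where the stronger assumption \eqref{eq_extra_blambda} (rather than just $\ob_{G_1}>0$) is genuinely used, exactly as flagged in the remark preceding the statement.

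Next I would verify that $g=g_1+\Xi'$ still satisfies the structural hypotheses needed to run the minimax machinery of \cite{CGIT24}: that $\Xi'$ is continuous and odd (so \ref{(g0')} holds), and that the limits $\Xi'(s)/(|s|^{p-1}s)\to 0$ as $s\to0$ and $\Xi'(s)/s\to 0$ as $|s|\to\infty$ ensure \ref{(g1*)} with $\alpha=0$ is preserved. These are built into the definition of $X$ in \eqref{def:X}, so $h$ for the perturbed problem still has the correct behaviour at $0$ and $\infty$, and $I_{G_1+\Xi}$ retains the geometry (1)--(3) defining $\gamma_0$ and $\oGamma$. Consequently $\ob_{G_1+\Xi}$ is well defined and, by the argument of the previous paragraph, strictly positive. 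At this point one still needs \ref{(g2)} for the perturbed nonlinearity to conclude that the minimax level is critical and yields a genuine positive solution — and here is the crux, since \ref{(g2)} is precisely the condition the theorem wants to \emph{avoid} assuming directly on $g_1+\Xi'$.

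The resolution is to shrink $\rho_1$ further so that \ref{(g2)} for $g_1+\Xi'$ follows from \ref{(g2)} for $g_1$ by a perturbative/compactness argument: if $u_0\in F$ solved $-\Delta u_0=g_1(u_0)+\Xi'(u_0)$ with $u_0\ge0$, $u_0\not\equiv0$, then by Pohozaev-type identities and the $X$-smallness of $\Xi$ one derives a priori bounds forcing $u_0$ into a region where $\Xi'$ is negligible, contradicting \ref{(g2)} for $g_1$; alternatively one invokes the compactness result \cref{prop_compact_Kb} for the critical set $\wKb$ of the perturbed functional, whose proof in \cite{CGIT24} uses \ref{(g2)} only through such a bounded-Palais–Smale-sequence argument that is stable under small $X$-perturbations. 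With \ref{(g2)} thus secured and $\ob_{G_1+\Xi}>0$, the existence theorem \cref{thm_old_ex_precise} applied to $g=g_1+\Xi'$ produces a positive radially symmetric solution of \eqref{eq_main} at level $\ob_{G_1+\Xi}$, completing the proof.

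The main obstacle I anticipate is making the transfer of \ref{(g2)} rigorous: one must quantify ``$\Xi$ small in $X$ implies no nontrivial zero-mass solution'' uniformly, which requires either a clean a priori bound on hypothetical solutions $u_0$ of the perturbed zero-mass equation (likely via the scaling/Pohozaev structure of $F$) or a careful inspection of where \ref{(g2)} enters the compactness proof of \cite{CGIT24}. The continuity of $\lambda\mapsto b_G(\lambda)$ in $G$ and the uniform-in-$\Xi$ decay $b_G(\lambda)\to0$ as $|\lambda|\to\infty$ are also needed but should be routine consequences of the estimates already developed in \cite{CGIT24}, since the $\|\cdot\|_X$-norm exactly controls the nonlinear terms appearing in $b_G(\lambda)$.
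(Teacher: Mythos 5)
Your overall strategy coincides with the paper's: use \eqref{eq_extra_blambda} plus lower semicontinuity of $\Xi\mapsto\tilde b_{G_1+\Xi}$ to get $\ob_{G_1+\Xi}>0$ for small $\Xi$ (this is \cref{Lem:conti}, and note that lower semicontinuity does not require any uniform decay of $b_G(\lambda)$ as $|\lambda|\to\infty$ --- a supremum of continuous functions is automatically lsc; the paper proves joint continuity of $(\lambda,\Xi)\mapsto b_{G_1+\Xi}(\lambda)$ by passing to limits of least energy solutions), then stabilize the ``no nontrivial zero-mass solution'' condition under small $X$-perturbations, and finally run the minimax machinery of \cite{CGIT24}. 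However, the crux step is exactly where your proposal has a genuine gap. You propose to transfer the \emph{full} condition \ref{(g2)} from $g_1$ to $g_1+\Xi'$, either by arguing that a hypothetical solution $u_0$ of the perturbed zero-mass equation is ``forced into a region where $\Xi'$ is negligible'', or by asserting that the use of \ref{(g2)} inside \cref{prop_compact_Kb} is stable. Neither works as stated: the $X$-norm controls $\Xi'$ only \emph{relative} to $|s|^p$ (near $0$) and $|s|$ (at infinity), so $\Xi'$ is never pointwise negligible compared with $g_1$ on any range of $u_0$; and the compactness/contradiction argument that actually yields stability needs an a priori bound on the solutions, which is obtained from the Pohozaev identity \emph{only when the zero-mass energy is restricted to a sublevel set} ($Z\leq\beta$ gives $\|\nabla u_j\|_2^2\leq N\beta$ in Step 1 of \cref{Prop:non-ex}); without such a level bound there is no boundedness and the limiting argument collapses, which is why the paper never claims that full \ref{(g2)} is stable.

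The paper's resolution, which is the idea missing from your sketch, is to replace \ref{(g2)} by the level-restricted condition \ref{(g2w)} (no nonnegative nontrivial zero-mass solutions with $Z(u)\leq\beta$), observe that \ref{(g2w)} with $\beta\geq\ob_G$ already suffices for the criticality of $\ob_G$, and prove that \emph{this} condition is stable under small $\calG_p$- (hence $X$-) perturbations (\cref{Prop:non-ex}). To apply it one needs a single $\beta$ working for all admissible $\Xi$, i.e.\ the uniform bound $\overline M\geq\sup_{\Xi\in B^X_{\rho_0}}\ob_{G_1+\Xi}$ in \eqref{eq_bdd_ob_G1Xi}; this in turn forces the construction of the map $\zeta_1$ and of the $2$-dimensional path class $\oGamma$ \emph{uniformly} in $\Xi\in B^X_{\rho_0}$ (\cref{Lem_zeta1rho1}), a uniformity issue your second paragraph glosses over when it says that ``$I_{G_1+\Xi}$ retains the geometry''. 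With $\rho_1$ chosen as the minimum of the radius from \cref{Lem:conti} and the one from \cref{Prop:non-ex} (applied with $\beta=\overline M$), the deformation argument and the $(PSPC)^*$ condition at level $\ob_{G_1+\Xi}>0$ give the solution. So your plan is recoverable, but only after weakening \ref{(g2)} to its sublevel version and supplying the uniform path construction; as written, the transfer of full \ref{(g2)} is an unproven (and by this method unprovable) claim.
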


We highlight that the existence perturbation result in \cref{Thm:Exw/og2} is a consequence of a non-existence perturbation result related to the zero mass problem \eqref{eq_zero_mass}, which holds under more general growth conditions on $g$, see \cref{section_stab_nonext_zero}.
Moreover, the proof of \cref{Thm:Exw/og2} exploits also some continuity property with respect to $g$, 
in particular the lower semicontinuity of 
$\tilde{b}_{G_1}$
(see \cref{Lem:conti}). 
We highlight also that, as abovementioned, the minimax values $\ob_{G_1+\Xi}$ may a priori depend on some fixed quantities which arise in the construction of the paths $\oGamma_{G_1+\Xi}$; a keypoint in \cref{section_exist_perturb} is that such construction can be made uniform with respect to $g_1$ and $\rho_1$.

As a particular case of \cref{Thm:Exw/og2}, we obtain the following statement, where the abstract condition \eqref{eq_extra_blambda} is indeed verified (see \cref{prop_b_neg_post}).
\begin{Corollary}\label{Cor:Exw/og2}
	Assume that $g_1$ satisfies \ref{(g0')}, \ref{(g1*)} with $\alpha = 0$, \ref{(g2)} (e.g. \ref{(g3)} holds) and 
	\begin{equation}\label{eq_condg_1}
		G_1(s) \leq \frac{1}{p+1} \abs{s}^{p+1} \quad \text{for each $s \in [0,\infty)$}, 
	\end{equation}
	strict in at least one point. 
	Then the conclusion of \cref{Thm:Exw/og2} holds.
\end{Corollary}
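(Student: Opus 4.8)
\textbf{Proof plan for Corollary~\ref{Cor:Exw/og2}.}

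The strategy is to show that the hypotheses of Corollary~\ref{Cor:Exw/og2} imply those of Theorem~\ref{Thm:Exw/og2}; the only nontrivial point is to verify the abstract condition \eqref{eq_extra_blambda}, namely $\tilde b_{G_1} = \sup_{\lambda\in\R} b_{G_1}(\lambda) > 0$. First I would record that \ref{(g0')}, \ref{(g1*)} with $\alpha = 0$ and \ref{(g2)} are assumed directly (with the parenthetical note that \ref{(g3)} is one sufficient condition for \ref{(g2)}, by Remark~\ref{Rem-val-g2}), so nothing needs to be done there. It then suffices to invoke the auxiliary statement referenced in the text, Proposition~\ref{prop_b_neg_post}: under \eqref{eq_condg_1} (with strict inequality somewhere) one has $b_{G_1}(\lambda) > 0$ for some $\lambda$, hence $\tilde b_{G_1} > 0$. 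Once \eqref{eq_extra_blambda} is in hand, Theorem~\ref{Thm:Exw/og2} applies verbatim and yields the conclusion: a threshold $\rho_1 = \rho_1(g_1) > 0$ such that for every $\Xi\in X$ with $\|\Xi\|_X < \rho_1$, problem \eqref{eq_main} with $g = g_1 + \Xi'$ has a positive solution at the level $\ob_{G_1+\Xi} > 0$.

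The substantive content is therefore entirely in Proposition~\ref{prop_b_neg_post}, i.e.\ in showing that the sign condition \eqref{eq_condg_1} on $G_1$ forces the fixed-frequency ground state level $b_{G_1}(\lambda)$ to be strictly positive for at least one $\lambda$. Here the natural approach is a comparison argument with the pure power case $g(t) = |t|^{p-1}t$, for which $G(s) = \frac{1}{p+1}|s|^{p+1}$ and, by the $L^2$-critical scaling, $b(\lambda) \equiv 0$ for all $\lambda$ (this is the degenerate case (3) recalled in the introduction, where $\ub = \ob = 0$). Since $G_1(s) \le \frac{1}{p+1}|s|^{p+1}$ pointwise, the functional $u\mapsto I_{G_1}(\lambda,u) = \calI_{G_1}(u) + e^\lambda(\tfrac12\|u\|_2^2 - m_1)$ dominates its power-case counterpart $I_{\mathrm{pow}}(\lambda,u)$ pointwise, which gives $b_{G_1}(\lambda) \ge b_{\mathrm{pow}}(\lambda) = 0$ for every $\lambda$ (one should check that the mountain-pass/ground-state characterization of $b(\lambda)$ is monotone under pointwise domination of the functional, which follows from the min–max definition in \eqref{eq_def_blambda}). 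To upgrade $\ge 0$ to $> 0$ at some $\lambda$, I would use the strictness of \eqref{eq_condg_1} at a point $s_*$: evaluating along the scaled family of the power ground state $\omega_1$ (dilations that sweep through the relevant mass/frequency régime) picks up a strictly positive gap $\int_{\R^N}\big(\tfrac{1}{p+1}|u|^{p+1} - G_1(u)\big)\,dx > 0$ on any profile that is positive near scale $s_*$, and one must argue that this gap does not get swallowed by the optimization defining $b_{G_1}(\lambda)$ — typically by noting that the power-case mountain-pass level $0$ is not attained and any near-minimizing sequence for $b_{G_1}(\lambda)$ either concentrates/vanishes (in which case the level drifts to something $\ge 0$ by a Brezis–Lieb / profile-decomposition bookkeeping that keeps the $H(u)$ contribution controlled) or stays compact (in which case the strict gap is genuinely felt).

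The main obstacle I anticipate is precisely this last step: turning a pointwise strict inequality of primitives into a \emph{strict} inequality of min–max levels, because in the $L^2$-critical setting the comparison level $0$ is not attained and compactness can fail. The cleanest route is probably to exploit the identity \eqref{eq_lagr_first}, $\ub = \inf_\lambda b(\lambda)$, together with Theorem~\ref{thm_exists_min}(i), $d = \ub$: under \eqref{eq_condg_1} strict somewhere, one shows $d = \inf\{\calI_{G_1}(u)\mid u\in\calS_0\} < 0$ (test with a suitable scaled $\omega_1$-profile where the strict gap lives, so that $\calI_{G_1}$ goes strictly below the power-case value $0$), hence $\ub_{G_1} < 0$; then, since $\ub_{G_1} \le \ob_{G_1}$ and $\ub = \ob = 0$ holds \emph{only} in the pure power case (or rather: since $\ub_{G_1}<0$ places us in case (1)–(2) territory), one separately argues $\ob_{G_1} > 0$, and by \eqref{eq_three_b} this already gives $\tilde b_{G_1} \ge \ob_{G_1} > 0$ — even more directly, $\tilde b_{G_1} = \sup_\lambda b_{G_1}(\lambda) \ge \ob_{G_1} > 0$. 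Establishing $\ob_{G_1} > 0$ from $G_1 \le \frac{1}{p+1}|s|^{p+1}$ with strictness is the genuinely delicate analytic point and is exactly what Proposition~\ref{prop_b_neg_post} is set up to deliver; once it is available, Corollary~\ref{Cor:Exw/og2} is a one-line consequence of Theorem~\ref{Thm:Exw/og2}.
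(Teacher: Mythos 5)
Your main argument is exactly the paper's proof: the corollary follows by combining \cref{prop_b_neg_post}~(ii) — which, under \eqref{eq_condg_1} with strict inequality somewhere (and evenness of $G_1$ from \ref{(g0')}), gives $b_{G_1}(\lambda)>0$ for large $\lambda$, hence $\tilde b_{G_1}=\sup_{\lambda}b_{G_1}(\lambda)>0$, i.e.\ \eqref{eq_extra_blambda} — with \cref{Thm:Exw/og2}. That part is correct and is all that is needed, since \cref{prop_b_neg_post} is an already established result of the paper.

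However, the ``cleanest route'' you sketch in your final paragraph is flawed on two counts and would not work as a substitute. First, under $G_1\leq G_0$ (strict somewhere) the level $d$ is \emph{not} negative: by the Gagliardo--Nirenberg bound \eqref{eq_estimate_GN} one has $\calI_{G_1}(u)\geq \calI_0(u)\geq 0$ for every $u\in\calS_0$, so $d=\ub_{G_1}=0$ (cf.\ \cref{prop_b_neg_post}~(ii) and \cref{Rem:Md}~(2)); the situation $d<0$ corresponds to the \emph{opposite} inequality $G\geq G_0$ (\cref{prop_b_neg_post}~(i), \cref{cor_exists_min}), and testing with a scaled $\omega_1$-profile here pushes $\calI_{G_1}$ strictly \emph{above} the power-case value, not below. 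Second, your inequality $\tilde b_{G_1}\geq \ob_{G_1}$ is reversed: \eqref{eq_three_b}/\eqref{eq_b_blambda} give $\ub\leq b(\lambda)\leq\ob$ for every $\lambda$, hence $\tilde b\leq\ob$, so knowing $\ob_{G_1}>0$ does \emph{not} yield $\tilde b_{G_1}>0$ — this is precisely why the hypothesis \eqref{eq_extra_blambda} of \cref{Thm:Exw/og2} is, in general, strictly stronger than $\ob_G>0$, as the paper points out. The correct way to obtain $\tilde b_{G_1}>0$ is the one you state first: \cref{prop_b_neg_post}~(ii) directly produces $\lambda$ with $b_{G_1}(\lambda)>0$, via a comparison of mountain-pass levels along an optimal path through a ground state $w_\mu$ whose sup-norm reaches the region where $G_1\not\equiv G_0$.
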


\begin{Remark}\label{rem_odd_extension}
Similarly to \cref{rem_odd_ext_minim}, we highlight that, in 
\cref{Cor:Exw/og2}, one may relax \ref{(g0')} to \ref{(g0)} and consider the odd extension $\widetilde{g}_1$ of $g_1$ in order to get a positive solution for $g_1+\Xi'$ at the positive level $\ob_{\widetilde{G}_1+\widetilde{\Xi}}>0$, for any $\Xi$ 
satisfying the conditions in \eqref{def:X} (here $\Xi$ even is not needed, and $\widetilde{\Xi}$ is indeed the even extension of $\Xi$)
with $\| \Xi \|_X < \rho_1=\rho_1(\tilde{g}_1)$.
\end{Remark}

As announced, consequently to \cref{Thm:Exw/og2}, we get the following existence result which is not covered by \cref{Rem-val-g2} (ii) (see \cite[Proposition 1.2]{CGIT24}). 

\begin{Corollary}\label{Cor:ex-g}
	Let $N\geq 2$. 
	Then there exists a function $g$ with the following properties:
	\begin{itemize}
		\item 
		$g$ satisfies \ref{(g0')}, \ref{(g1*)} with $\alpha = 0$, but does not fulfill \ref{(g3)};
		\item 
		there exists a positive solution 
		$u$ of \eqref{eq_main} with $\mu = e^\lambda$ and $I(\lambda,u) = \ob_G>0$. 
	\end{itemize}
\end{Corollary}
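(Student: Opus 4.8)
The plan is to reduce \cref{Cor:ex-g} to \cref{Cor:Exw/og2}: it suffices to produce one base nonlinearity $g_1$ fulfilling the hypotheses of \cref{Cor:Exw/og2} — i.e. \ref{(g0')}, \ref{(g1*)} with $\alpha=0$, \ref{(g2)} (obtained from \ref{(g3)}) and \eqref{eq_condg_1} with strict inequality somewhere — and, crucially, such that \ref{(g3)} holds for $g_1$ \emph{with equality} at some point $s_*>0$. Then \cref{Cor:Exw/og2} yields $\rho_1=\rho_1(g_1)>0$ such that, for every $\Xi\in X$ with $\|\Xi\|_X<\rho_1$, problem \eqref{eq_main} with $g:=g_1+\Xi'$ admits a positive solution $u$ with $I(\cdot,u)=\ob_{G_1+\Xi}=\ob_G>0$. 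Since $g$ automatically still satisfies \ref{(g0')} and \ref{(g1*)} with $\alpha=0$, all that remains is to choose $\Xi$ small and concentrated near $s_*$ so that \ref{(g3)} breaks for $g$.

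\emph{Construction of $g_1$.} Setting $2^*:=\tfrac{2N}{N-2}$ and differentiating, one sees that \ref{(g3)} is equivalent to $s\mapsto s^{-2^*}G_1(s)$ being non-increasing on $(0,\infty)$, while \eqref{eq_condg_1} reads $s^{-2^*}G_1(s)\le\tfrac1{p+1}s^{p+1-2^*}$. For $N\ge3$ let $\nu:=2^*-(p+1)=\tfrac{8}{N(N-2)}>0$, fix a small $\phi_0>0$ and put $s_0:=((p+1)\phi_0)^{-1/\nu}$. Writing $G_1(s)=\tfrac1{p+1}s^{p+1}\eta(s)$, the two conditions become $\eta\le1$ and $s\mapsto s^{-\nu}\eta(s)$ non-increasing. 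Choose $\eta\in C^1((0,\infty))$ with $0<\eta\le1$, $\eta\equiv1$ for $s\le\epsilon_0$ and for $s\ge s_0$ (a fixed $\epsilon_0\in(0,s_0)$), $\eta(s)=(p+1)\phi_0 s^{\nu}$ on an interval $[a_0,s_0]$ with $a_0\in(\epsilon_0,s_0)$ — so that $s^{-2^*}G_1\equiv\phi_0$ there and \ref{(g3)} is an equality on $[a_0,s_0]$ — and $\eta$ suitably interpolated on $[\epsilon_0,a_0]$; a compatible smooth choice exists, with the corners rounded downward so as to preserve $\eta\le1$ and $s^{-\nu}\eta$ non-increasing. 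Then $g_1:=G_1'$, extended oddly, satisfies \ref{(g0')}; since $G_1(s)=\tfrac1{p+1}s^{p+1}$ near $0$ and for $s\ge s_0$, the function $h_1=g_1-|s|^{p-1}s$ vanishes there, so \ref{(g1*)} holds with $\alpha=0$; \ref{(g3)} holds with equality precisely on (the rounded) $[a_0,s_0]$, hence so does \ref{(g2)}; and \eqref{eq_condg_1} holds, strictly on $(\epsilon_0,s_0)$. Thus \cref{Cor:Exw/og2} applies to $g_1$. (For $N\le4$ this construction can be bypassed: there \ref{(g2)} is automatic by \cref{Rem-val-g2}\,(i), so one may simply take $g_1$ the odd extension of $|s|^{p-1}s+h_1(s)$ with $h_1\le0$ a sufficiently deep bump supported away from $0$, which already violates \ref{(g3)} while keeping \ref{(g0')}, \ref{(g1*)} with $\alpha=0$ and \eqref{eq_condg_1}; then one concludes with $\Xi=0$.)

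\emph{The perturbation.} Fix $s_*$ in the interior of the equality interval. Since $\tfrac{N-2}{2N}>0$ for $N\ge3$, fix an odd $\Theta\in C^1(\R)$ supported in a small neighbourhood of $\pm s_*$, with $\Theta(s_*)>0$ and $\int_0^{s_*}\Theta=0$, and set $\Xi(s):=\varepsilon\int_0^s\Theta$ for small $\varepsilon>0$: then $\Xi\in X$, $\|\Xi\|_X=O(\varepsilon)<\rho_1$, $\Xi(s_*)=0$ and $\Xi'(s_*)=\varepsilon\Theta(s_*)>0$. For $g=g_1+\Xi'$, using that the bracket below vanishes because \ref{(g3)} holds with equality at $s_*$ for $g_1$ and $\Xi(s_*)=0$,
\[
G(s_*)-\tfrac{N-2}{2N}\,g(s_*)\,s_* = \Big(G_1(s_*)-\tfrac{N-2}{2N}g_1(s_*)s_*\Big)-\tfrac{N-2}{2N}\Xi'(s_*)s_* = -\tfrac{N-2}{2N}\Xi'(s_*)s_* < 0,
\]
so $g$ does not fulfil \ref{(g3)}. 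As noted above, $g$ still satisfies \ref{(g0')} and \ref{(g1*)} with $\alpha=0$, and by \cref{Cor:Exw/og2} (applied with this $\Xi$) problem \eqref{eq_main} with this $g$ admits a positive solution $u$ with $I(\cdot,u)=\ob_G>0$; this is exactly the assertion of \cref{Cor:ex-g}. (For $N=2$, where \ref{(g3)} reads $G\ge0$, use the shortcut with $h_1\le0$ deep enough that $G_1(s_*)<0$ for some $s_*$, and $\Xi=0$.)

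\emph{Main obstacle.} The perturbation step is routine; the crux is the construction of $g_1$, i.e. exhibiting a nonlinearity that respects \ref{(g3)} \emph{globally} and \eqref{eq_condg_1} \emph{while} making \ref{(g3)} an equality on a non-degenerate set. A naive plateau for $G_1$ does not work when $N\ge3$: immediately outside a plateau $G_1$ is small but $g_1=G_1'$ is not, so $G_1\ge\tfrac{N-2}{2N}g_1s$ fails there. Recasting everything through $s^{-2^*}G_1$ (for which \ref{(g3)} is plain monotonicity and \eqref{eq_condg_1} is domination by the explicit profile $\tfrac1{p+1}s^{p+1-2^*}$) is what makes the bookkeeping feasible, but one still has to verify the compatibility of ``non-increasing, equal to that profile near $0$ and near $\infty$, flat on a middle interval, everywhere below the profile'' — this is the only genuinely delicate point. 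It should also be stressed that the final $g=g_1+\Xi'$ need satisfy \emph{neither} \ref{(g2)} \emph{nor} \ref{(g3)}: that it nonetheless carries a positive solution at the level $\ob_G$ is precisely the robustness of the critical value $\ob$ under small perturbations established in \cref{Thm:Exw/og2} and packaged in \cref{Cor:Exw/og2}.
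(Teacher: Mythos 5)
Your proposal is correct and follows essentially the same route as the paper: you reduce to \cref{Cor:Exw/og2} by constructing a base nonlinearity $g_1$ through the reformulation of \ref{(g3)} as monotonicity of $s\mapsto s^{-2^*}G_1(s)$, arrange that \ref{(g3)} is saturated (you on an interval, the paper at the single point $s=2$ via $F_1'(2)=0$), then break \ref{(g3)} with a small localized perturbation $\Xi\in X$ handled by the stability result, treating the low-dimensional cases by the same shortcut as the paper. Your verification of the failure of \ref{(g3)}, done directly on the inequality with $\Xi(s_*)=0$ and $\Xi'(s_*)>0$, is in fact stated with the correct sign, whereas the paper's final display takes $\varphi'(2)<0$ although violating \ref{(g3)} requires the derivative of $s^{-2^*}G$ to be positive somewhere (i.e.\ $\varphi'(2)>0$) — an inessential slip that does not affect either argument.
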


\begin{Remark}
We notice, by \eqref{eq_three_b}, that $\ob_G\geq \tilde{b}_G$, thus \eqref{eq_extra_blambda} is indeed generally stronger than $\ob_G>0$.
On the other hand, recalled that, by \cite{CGIT24}, $
\tilde{b}_G\geq \lim_{\lambda \to \pm\infty} b_G(\lambda)=0$, we see 
that condition \eqref{eq_extra_blambda} is optimal from the point of view of our discussion. As a matter of fact, assume $\tilde{b}_G 
=0$: if $\ub_G<0$, then we have a solution without assuming \ref{(g2)} (see \cref{thm_old_ex_precise}); if $\ub_G=0$, then by \eqref{eq_three_b} we have $b_G(\lambda)=0$, and arguing as in \cite{CGIT24} we obtain again the existence of a solution without \ref{(g2)}. Thus the only case in which \ref{(g2)} (and thus \ref{(g3)}) plays a role is when \eqref{eq_extra_blambda} holds.
\end{Remark}

The paper is organized as follows. 
In \cref{Section_prelim} we recall some constructions and some results contained in \cite{CGIT24}, together with some other classical results on Schrödinger equations. In \cref{Section_examples} we provide explicit examples of $g$ satisfying the different cases stated in \cref{thm_examples}. \cref{Section_L2min} is devoted to the study of the $L^2$-minimization problem, in particular in \cref{subsec_theorem_exist_min} we show the existence of a minimizer (\cref{thm_exists_min}), while in \cref{Section_nonexist_min} we show the non-existence (\cref{thm_nonexis_min}).
Finally, in \cref{sec_stability} we discuss 
some stability properties of the problem: in particular, in \cref{section_stab_nonext_zero} we focus on non-existence for the zero mass problem \eqref{eq_zero_mass}, while in \cref{section_exist_perturb} we consider existence for the main problem \eqref{eq_main} and prove \cref{Thm:Exw/og2} and \cref{Cor:ex-g}.

Let us highlight that, apart from \cref{Section_nonexist_min} 
where \ref{(g0)} is sufficient, in the rest of the paper we assume \ref{(g0')}.

\section{Preliminaries}
\label{Section_prelim}

In this section we recall several results, some of them obtained in \cite{CGIT24}, to which we refer for details. 

Let us consider the subspace of radially symmetric functions in $H^1(\R^N)$
\begin{equation} \label{eq_def_radiall_sub}
	E\coloneqq H_r^1(\R^N)= \Big\{ u\in H^1(\R^N) \mid u(x)=u(\abs x) \big\}.
\end{equation}
For the $L^2$-sphere $\calS_0$ in \eqref{eq_L2sphere}, an $L^2$-cylinder $M_0$ in $\R\times E$ 
is defined by 
 \begin{equation*} 
 M_0 \coloneqq \R \times \calS_0. 
 \end{equation*}
 
 \medskip
 
\noindent
\textbf{I. Functionals.}

For each $\mu>0$, we define the action functional $\Psi_\mu:\, E\to\R$ by
\begin{equation*} 
	\Psi_\mu(u) \coloneqq \frac{1}{2} \norm{\nabla u}_2^2 + \frac{\mu}{2} \norm{u}_2^2 
	- \intRN G(u) \, dx.
\end{equation*}
Notice that any critical point of $\Psi_\mu$ is a solution to 
\begin{equation}\label{eq:Psi_mu}
	-\Delta u + \mu u = g(u) \quad \text{in} \ \R^N. 
\end{equation}
By the identification 
\[\mu \equiv e^{\lambda}>0,\]
we recall the Lagrangian functional $I: \R \times E \to \R$ in \eqref{eq_def-I} given by 
\begin{equation*}
	I(\lambda,u) = \intRN \frac{1}{2} \abs{\nabla u}^2 - G(u) \, dx 
	+ \mu \left( \frac{1}{2} \norm{u}_{2}^2 - m_1 \right) 
	= \Psi_\mu(u) - \mu m_1.
\end{equation*}
We highlight that such an identification among $\mu$ and $e^{\lambda}$ will be carried out throughout the paper. 
We denote its set of critical points at a level $b \in \R$ by 
\begin{equation} \label{eq_def_criticalset}
	K_b \coloneqq \big\{(\lambda,u)\in\RE \mid I(\lambda,u)=b, \ 
		D_{\lambda, u}I(\lambda, u)=0
	\big\}.
\end{equation}
Recalling the functional $\calI$ in \eqref{eq_def_energyf}, we clearly have 
\[
\Psi_\mu(u) = \calI(u) + \frac{\mu}{2} \norm{u}_2^2. 
\]

\medskip

\noindent
\textbf{II. Constrained minimax values.}
 
 We recall the precise definitions of $\ub$ and $\ob$. 
Under \ref{(g1*)}, there exists $\bar{A}>0$ such that
\begin{equation*}\label{eq_h_Abar}
	\abs{h(s)} \leq 2\bar{A} \abs s, \quad 
	\abs{H(s)} \leq \bar{A} s^2 \quad \text{for all}\ s\in\R. 
\end{equation*}
In \cite[Section 3]{CGIT24}, under \ref{(g0')} and \ref{(g1*)}, 
the map $\zeta_0\in C^2(\R,E)$ is constructed satisfying $\zeta_0\geq 0$ and (recall $\mu=e^{\lambda}$) 
\begin{equation}\label{eq_zeta0_prop}
	\begin{aligned}
		&\Psi_\mu ( \zeta_0 (\lambda) ) < -2 \bar{A} m_1 - 1 \quad 
		\text{for any}\ \lambda \in \R; 
		\\
		&\frac{1}{2}\norm{ \zeta_0(\lambda) }_2^2 > m_1 \quad 
		\text{for any}\ \lambda \in \R; 
		\\
		&\max_{0 \leq t \leq 1} I(\lambda, t \zeta_0 (\lambda) ) \to 0 \quad
		\text{as} \ \lambda \to -\infty;
		\\
		&\max_{0 \leq t \leq 1} I(\lambda, t \zeta_0 (\lambda) ) \to -\alpha m_1 
\quad
		\text{as} \ \lambda \to +\infty.
	\end{aligned}
\end{equation}
Using $\zeta_0$, we first define $\ub$ by 
\begin{equation*}
	\begin{aligned} 
		&\ub \coloneqq \inf_{\gamma\in \uGamma}\max_{t\in [0,1]} I(\gamma(t)), \\
		&\uGamma \coloneqq \Big\{ 
		\gamma\in C\big([0,1],\RE\big) \;\Big|\; 
		\gamma(0)\in\R\times\{ 0\}, \ I(\gamma(0)) < - 2 \bar A m_1 - 1, \ \gamma (1) \in ({\rm id}\times\zeta_0)(\R) 
		\Big\}.
	\end{aligned} 
\end{equation*}
By construction, the condition on $\gamma (1)$ implies 
$I(\gamma(1)) < -2 \bar{A} m_1 - 1$ and $\frac{1}{2}\norm{\gamma_2(1)}_2^2 > m_1$, and hence 
 \begin{equation*} 
 \gamma([0,1])\cap 
M_0
\not=\emptyset \quad 
 \text{for all}\ \gamma\in\uGamma.
 \end{equation*}

For the second minimax value $\ob$, we define $\gamma_0\in C^2( [0,1] \times \R , \RE )$ by $\gamma_0 (t,\lambda) \coloneqq \left( \lambda , \ t \zeta_0 (\lambda) \right) $ and for $L> 2$, a collar $\calC(L) \subset [0,1] \times \R$ by 
\begin{equation}\label{eq_def_calC(L)}
 \calC(L) \coloneqq \Big( [0,1]\times \big( (-\infty,-L ] \cup [L,\infty) \big) \Big) \bigcup \Big(
 \big([0,1/L]\cup [1-1/L,1]\big)\times \R \Big) \subset [0,1] \times \R. 
\end{equation}
Then we set 
\begin{equation}\label{def_ob_oG}
	\begin{aligned} 
		\ob 
		& \coloneqq \inf_{\gamma\in\oGamma}\sup_{(t,\lambda)\in [0,1]\times\R} I(\gamma(t,\lambda)),\\
		\oGamma & \coloneqq \Big\{ \gamma \in C\big([0,1]\times\R,\RE\big) \;\Big|\;
			\gamma(t,\lambda)=\gamma_0(t,\lambda) \ 
			\text{for}\ (t,\lambda)\in \calC(L_\gamma), 
			\; \text{for some} \ L_\gamma>2 
		\Big\}.
	\end{aligned}
\end{equation}
We note that $\gamma_0\in\oGamma$ and $\oGamma\not=\emptyset$, 
and it is proved that for any $\gamma \in \oGamma$, 
\begin{equation*}
	\lim_{\lambda \to - \infty} \max_{0 \leq t \leq 1} 
	I\left( \gamma(t, \lambda) \right) = 0, \quad 
	\lim_{\lambda \to +\infty} \max_{0 \leq t \leq 1} 
	I \left( \gamma (t,\lambda) \right) = -\alpha m_1.
\end{equation*}
The minimax values $\ub$ and $\ob$ enjoy the following property 
\begin{equation}\label{eq_general_est_b}
-2 \bar{A} m_1 \leq \ub \leq \min\{-\alpha m_1, 0\} \leq 0 \leq \max\{ -\alpha m_1, 0\} \leq \ob.
\end{equation}

We next recall the notation of Palais-Smale-Pohozaev-Cerami sequences and the related compactness condition 
introduced in \cite{CGIT24}, which will be also used in the proof of \cref{Thm:Exw/og2}.

\begin{Definition}[{\cite[Subsection 5.1]{CGIT24}}]\label{D:PSPC}
	Let $b \in \R$ and set $\scrP_0 \coloneq \big\{ u \in E \ | \ u \geq 0 \big\} $.  
	\begin{enumerate}[label=(\roman*)]
		\item 
		A sequence $(\lambda_j,u_j)_{j=1}^\infty \subset \R \times E$ is called a 
		\emph{Palais-Smale-Pohozaev-Cerami sequence at level $b$} (in short $\PSPC_b$ sequence) if 
		$I(\lambda_j,u_j) \to c$, $\partial_\lambda I(\lambda_j,u_j) \to 0$, $(1+\| u_j \|_E) \| \partial_u I(\lambda_j,u_j) \|_{E^*} \to 0$ 
		and $P(\lambda_j,u_j) \to 0$ where 
		\[
		P(\lambda,u) \coloneq \frac{N-2}{2} \| \nabla u \|_2^2 - N \int_{\RN} G(u) + \frac{e^{\lambda_j}}{2} u^2 \,dx
		\]
		is the Pohozaev functional. 
		
		\item 
		A sequence $(\lambda_j,u_j)_{j=1}^\infty$ is called a $\PSPC_b^*$	sequence if $(\lambda_j,u_j)_{j=1}^\infty$ is a $\PSPC_b$ sequence and satisfies 
		$\distE (u_j,\scrP_0) \to 0$. 
		
		\item 
		The functional $I$ is said to satisfy the \emph{Palais-Smale-Pohozaev-Cerami condition at level $b \in \R$} (in short the $\PSPC_b$ condition) if 
		each $\PSPC_b$ sequence is relatively compact in $\R \times E$. 
		Similarly, $I$ satisfies the $\PSPC_b^*$ condition provided every $\PSPC_b^*$ sequence is relatively compact. 
	\end{enumerate}
\end{Definition}

The following result is proved in \cite[Proposition 5.4]{CGIT24}.

\begin{Proposition}
	\label{prop_compact_Kb} 
Assume \ref{(g0')} and \ref{(g1*)} with $\alpha=0$. 
Then
\begin{itemize}
\item[\rm (i)] for $b<0$, $I$ satisfies the $\PSPC_b$ condition and $K_b$ is compact in $\R \times E$; 
\item[\rm (ii)] for $b>0$, if \ref{(g2)} holds in addition, then $I$ satisfies the $\PSPC_b^*$ condition; as a consequence, 
$K_b^+ \coloneq \big\{ (\lambda,u) \in K_b \mid u \geq 0\big\}$ is compact in $\R \times E$.
\end{itemize}
\end{Proposition}

As a matter of fact, we can give a more precise statement than \cref{thm_old_exist}. This can be easily deduced by \cite[Subsections 4.3, 7.3, 8.5 and Proposition 1.2]{CGIT24} (see also \cite[Proposition 4.6, Proposition 5.4, beginning of Section 7]{CGIT24}).

\begin{Theorem}[\cite{CGIT24}] 
\label{thm_old_ex_precise}
Let $N\geq 2$, $m=m_1$ be given in \eqref{eq_m1}, and assume \ref{(g0')}, 
\ref{(g1*)} with $\alpha=0$. 
Then \eqref{eq_main} with $m=m_1$ has a positive radially symmetric solution 
$(\mu,u)\in (0,\infty)\times H^1(\R^N)$ if one of the following conditions is satisfied: 
\begin{itemize}
\item $\ub<0$,
\item $\ob>0$ if $N=2,3,4$ or $\ob > 0$ and \ref{(g2)} (or $N \geq 5$ and \ref{(g3)}) hold,
\item $\ub=0=\ob$.
\end{itemize}
\end{Theorem}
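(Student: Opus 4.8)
The plan is to use that, by \eqref{eq_general_est_b} with $\alpha=0$, the three listed conditions exhaust all possibilities, and to produce in each case a nontrivial critical point $(\lambda,u)$ of the Lagrangian $I$ of \eqref{eq_def-I}. Once one has such a point, $\partial_uI(\lambda,u)=0$ says that $u$ solves $-\Delta u+\mu u=g(u)$ with $\mu=e^{\lambda}>0$, and $\partial_\lambda I(\lambda,u)=\mu(\tfrac12\norm{u}_2^2-m_1)=0$ gives the mass constraint; since $g$ is odd by \ref{(g0')}, passing to the absolute value does not increase $I$, so one may take $u\ge0$, and then $u$ is a positive classical solution by elliptic regularity and the strong maximum principle, as in \cref{Rem-on-g2}~(iv).

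In the cases $\ub<0$ and $\ob>0$ the task is to show that $\ub$, resp.\ $\ob$, is a critical value of $I$ carrying a nontrivial critical point, by the usual deformation argument. If it were not, then, using the Palais--Smale--Pohozaev--Cerami condition of \cite{CGIT24} and the compactness of $K_{\ub}$ for $\ub<0$ (\cref{prop_compact_Kb}~(i)), resp.\ of $K_{\ob}$ for $\ob>0$ (\cref{prop_compact_Kb}~(ii), which requires \ref{(g2)} --- automatic for $N=2,3,4$, and a consequence of \ref{(g3)} when $N\ge5$, by \cref{Rem-val-g2}), one could deform a near-optimal $\gamma\in\uGamma$, resp.\ $\gamma\in\oGamma$, so as to lower $\sup I(\gamma(\cdot))$ strictly below the minimax level, keeping the deformation equal to the identity away from that level. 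This is legitimate: for $\uGamma$, the endpoints satisfy $I(\gamma(0)),I(\gamma(1))<-2\bar{A}m_1-1\le\ub-1$, so they are left fixed and the deformed path is still in $\uGamma$; for $\oGamma$, a deformation near level $\ob>0$ fixes the collar $\calC(L_\gamma)$, where $\gamma=\gamma_0$ and $I\le0$, so the deformed path remains in $\oGamma$. In both cases this contradicts the minimax definition, and the resulting critical point is nontrivial because the respective path class forces a crossing of the cylinder $M_0=\R\times\calS_0$.

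For $\ub=0=\ob$, which is the non-compact case, I would reduce to the autonomous problem. By \eqref{eq_three_b}, $0=\ub\le b(\lambda)\le\ob=0$, so $b(\lambda)=0$ for all $\lambda$. Under \ref{(g0')} and \ref{(g1*)} with $\alpha=0$, for each $\mu>0$ the equation $-\Delta u+\mu u=g(u)$ has a positive radial ground state $u_\mu$ (Berestycki--Lions for $N\ge3$, Berestycki--Gallou\"et--Kavian for $N=2$: $g(s)/s\to0$ as $s\to0$, the exponent $p=1+\tfrac4N$ is Sobolev-subcritical, and $G(s_0)>\tfrac\mu2 s_0^2$ for $s_0$ large since $H(s)=o(s^2)$), with least-energy level $\Psi_\mu(u_\mu)=b(\log\mu)+\mu m_1=\mu m_1$. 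The key point left is $\tfrac12\norm{u_\mu}_2^2=m_1$: writing $u_\mu^t:=u_\mu(\cdot/t)$ one has $\Psi_{\mu'}(u_\mu^t)=\Psi_\mu(u_\mu^t)+\tfrac{\mu'-\mu}2 t^N\norm{u_\mu}_2^2$, and since $u_\mu$ is a least-energy solution the map $t\mapsto\Psi_\mu(u_\mu^t)$ is maximized at $t=1$ (and is constant if $N=2$); picking $t_{\mu'}\to1$ so that $u_\mu^{t_{\mu'}}$ lies on the Pohozaev manifold at frequency $\mu'$, one gets $\mu'm_1=a(\mu')\le\Psi_{\mu'}(u_\mu^{t_{\mu'}})\le\mu m_1+\tfrac{\mu'-\mu}2 t_{\mu'}^N\norm{u_\mu}_2^2$, and letting $\mu'\to\mu^{\pm}$ gives both $\tfrac12\norm{u_\mu}_2^2\ge m_1$ and $\tfrac12\norm{u_\mu}_2^2\le m_1$. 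Hence $(\log\mu,u_\mu)$ is a positive radial normalized solution, and letting $\mu$ vary over $(0,\infty)$ produces the uncountably many solutions mentioned in the Introduction.

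The deformation arguments in the first two cases are routine once the PSPC condition and \cref{prop_compact_Kb} are granted; the real obstacle is the case $\ub=0=\ob$. The delicate points there are (i) pinning down the least-energy characterization of $u_\mu$ that yields $\Psi_\mu(u_\mu^t)\le\Psi_\mu(u_\mu)$, with a separate treatment of $N=2$, where $u\mapsto u(\cdot/t)$ leaves the Dirichlet energy unchanged and one must use a two-parameter rescaling (or the characterization of $u_\mu$ as a constrained minimizer plus an envelope identity); (ii) the possible non-uniqueness of $u_\mu$, forcing one to fix a least-energy solution at each frequency and to use one-sided estimates; and (iii) the control $t_{\mu'}\to1$. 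One could also avoid (i) by showing that $\mu\mapsto a(\mu)$ is monotone, hence differentiable a.e.\ with $a'(\mu)=\tfrac12\norm{u_\mu}_2^2=m_1$, which already yields a normalized solution for a.e.\ $\mu$ --- enough for existence, if not for the full continuum.
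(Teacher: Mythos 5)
The paper itself does not reprove this statement (it is quoted from \cite{CGIT24}), so your sketch can only be measured against the strategy described there and in the introduction; on the first two bullets you do follow it: treat $\ub$ (resp.\ $\ob$) as a minimax level of the Lagrangian $I$, invoke the PSPC condition and the compactness of $K_{\ub}$, $K_{\ob}$ from \cref{prop_compact_Kb} (with \ref{(g2)} supplied by \cref{Rem-val-g2} for $N=2,3,4$ or under \ref{(g3)}), and read the mass constraint off $\partial_\lambda I=0$, which also gives nontriviality. Two points are glossed there: the deformation for $I$ on $\R\times E$ is not the off-the-shelf one (it must be compatible with the scaling/Pohozaev structure, which is precisely what the machinery of \cite{CGIT24} provides, so treating it as a black box is acceptable), and positivity does not follow from ``pass to $|u|$'': $|u|$ has the same value of $I$ but is not automatically a critical point, so one must run the minimax within nonnegative paths (using $\zeta_0\ge 0$) and produce nonnegative PSPC sequences, or use a comparable device, before applying the strong maximum principle as in \cref{Rem-on-g2}~(iv).

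The genuine gap is the third bullet for $N=2$. Your reduction is correct ($\ub=0=\ob$ forces $b(\lambda)\equiv0$, i.e.\ $a(\mu)=\mu m_1$), and for $N\ge3$ the envelope argument is sound: $t\mapsto\Psi_\mu(u_\mu(\cdot/t))$ is maximized at $t=1$ by the Pohozaev identity, $u_\mu(\cdot/t_{\mu'})$ can be placed on the $\mu'$-Pohozaev manifold with $t_{\mu'}\to1$, and \cref{prop_paths_f}~(i) gives $a(\mu')\le\Psi_{\mu'}(u_\mu(\cdot/t_{\mu'}))$, so the two one-sided estimates yield $\tfrac12\norm{u_\mu}_2^2=m_1$. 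For $N=2$, however, the step ``pick $t_{\mu'}$ so that $u_\mu(\cdot/t)$ satisfies the Pohozaev identity at frequency $\mu'$'' simply fails: in dimension $2$ that identity reads $\int_{\R^2}\big(G(u)-\tfrac{\mu'}{2}u^2\big)\,dx=0$, and dilation multiplies this quantity by $t^2$, so it can vanish for some $t$ only if it already vanishes at $t=1$, i.e.\ only for $\mu'=\mu$. Your fallback via monotonicity and a.e.\ differentiability of $a(\mu)$ does not close it either: to identify the derivative with $\tfrac12\norm{u_\mu}_2^2$ one compares with the optimal path of \cref{prop_paths_f}~(ii), whose maximizing set in $N=2$ is the whole family of dilates $u_\mu(\cdot/\theta)$, $\theta\in[\theta_0,\theta_1]$, carrying different masses $\theta^2\norm{u_\mu}_2^2$; the one-sided difference quotients then only give $\tfrac{\theta_0^2}{2}\norm{u_\mu}_2^2\le m_1\le\tfrac{\theta_1^2}{2}\norm{u_\mu}_2^2$ with $\theta_0\ll1\ll\theta_1$ fixed, which does not pin down the mass. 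You flag this as delicate, but flagging is not fixing: a genuinely different device is needed (for instance a two-parameter deformation $t\,u_\mu(\cdot/\theta)$ reaching the $\mu'$-Pohozaev manifold, with the extra condition \eqref{eq_paths_g_2} verified for the deformed function, or the argument actually carried out in \cite{CGIT24}), so as written the case $N=2$, $\ub=0=\ob$ is not established.
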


\medskip
\noindent
\textbf{III. Unconstrained minimax values, optimal paths and asymptotic behaviors of ground states.}

For $\lambda\in\R$ we introduce the mountain pass value $b(\lambda)$ of the functional (recall $\mu=e^{\lambda}$) 
$ u\mapsto I(\lambda,u)$ by
\begin{equation}\label{eq_def_blambda}
\begin{aligned}
 & b(\lambda) \coloneqq \inf_{\lambda\in\widehat{\Lambda}_\mu} \max_{t\in [0,1]}
 I(\lambda,\gamma(t)), \\
 & \widehat{\Lambda}_\mu \coloneqq \big\{ \gamma \in C([0,1],E) \mid \gamma(0)=0, \ 
 \gamma(1) = \zeta_0(\lambda) \big\}. 
\end{aligned}
\end{equation}
We have 
	\begin{equation} \label{eq_amu_blambda} 
		b(\lambda) = a(\mu)-\mu m_1, 
	\end{equation}
where $a(\mu)$ is the mountain pass value of the functional $\Psi_\mu$:
\begin{equation}\label{eq_def_amu}
\begin{aligned} 
 &a(\mu) \coloneqq \inf_{\gamma\in \Lambda_\mu}\max_{t\in [0,1]} \Psi_\mu(\gamma(t)), \\
 &\Lambda_\mu \coloneqq \big\{ \gamma\in C([0,1],E) \mid \gamma(0)=0, \
 \Psi_\mu(\gamma(1)) <0 \big\}.
\end{aligned}
\end{equation}
Furthermore, under \ref{(g0')} and \ref{(g1*)}, the key relation \eqref{eq_three_b} holds, that is,
\begin{equation}\label{eq_b_blambda} 
	\ub \leq b(\lambda) \leq \ob \quad \text{for each $\lambda \in \R$.}
\end{equation}
As a consequence, $\tilde{b}$ defined in \eqref{eq_def_btilde} satisfies $\ub \leq \tilde{b} \leq \ob$.
Remark that the proof of \cite[Proposition 4.2 (i) and (ii)]{CGIT24} works under \ref{(g1*)}. 

We next discuss the existence of optimal paths and recall the following results proved in \cite{JT0} (see also \cite[Section A.2]{CGIT24}).

\begin{Proposition}
	\label{prop_paths_f} 
	Assume $N\geq 2$ and $f\in C(\R)$ satisfies 
	\begin{enumerate}[label={\rm (f\arabic*)}]
		\setcounter{enumi}{0}
		\item \label{(f1)}
		$f$ has subcritical growth at $s=\pm\infty$, that is,
		\begin{itemize}
			\item[{\rm (1)}] for $N\geq 3$,
			$\displaystyle \lim_{\abs{s} \to \infty} {\frac{f(s)}{\abs s^{2^*-1}}} = 0$,
			\item[{\rm (2)}] for $N=2$
			$\displaystyle \lim_{\abs{s} \to \infty}{\frac{f(s)}{e^{\alpha s^2}}} = 0 \quad
			\text{for any}\ \alpha>0$;
		\end{itemize}
		\item \label{(f2)}
		$\displaystyle \lim_{s\to 0}{\frac{f(s)}{s}} \in (-\infty, 0)
$;
		\item \label{(f3)}
		there exists $s_0>0$ such that $F(s_0) > 0$ where 
		$\displaystyle F(s) \coloneqq \int_0^s f(\tau)\,d\tau$. 
	\end{enumerate}
	Define the functional $L: E\to \R$ by 
	\begin{equation*}
		L(u) \coloneqq \half\norm{\nabla u}_2^2 -\intRN F(u)\,dx.
	\end{equation*}
	Suppose that $u_0\in E\setminus\{ 0\}$ satisfies the Pohozaev identity
	\begin{equation} \label{eq_paths_g_1} 
		{\frac{N-2}{2}}\norm{\nabla u_0}_2^2 -N\intRN F(u_0)\,dx =0
	\end{equation}
	and when $N = 2$, we additionally suppose that
	\begin{equation} \label{eq_paths_g_2} 
		\intRN f(u_0)u_0\, dx > 0.			
	\end{equation}
	Then by writing $u_{0,\theta} \coloneqq u_0(\cdot/\theta)$, the following hold: 
	
	\begin{enumerate}
		\item[\rm (i)] 
		when $N\geq 3$, for a large $T>1$, the 
path $\gamma_0 \in C([0,1],E)$ defined by
		\[ 
		\gamma_0(t)(x) \coloneq \begin{dcases}
			u_{0 , Tt} &\text{for}\ t\in (0,1],\\
			0 &\text{for}\ t=0,
		\end{dcases}
		\]
		satisfies 
		\[
		u_0 \in \gamma_0([0,1]), \quad L(\gamma_0(1)) \ll -1, \quad L(\gamma_0(t)) < L(u_0) \quad \text{for every $t \in [0,1] \setminus \{1/T\}$}; 
		\]
		\item[\rm (ii)] 
		when $N=2$, there exist $0< \theta_0 \ll 1$, $1 \ll \theta_1$ and $t_1>1$ close to $1$ such that 
		the 
path $\gamma_0 \in C([0,1] , E)$ defined by joining the following three paths 
		\[	{\rm (a)}\ [0,1]\to E;\, t\mapsto tu_{0,\theta_0}, \quad 
		{\rm (b)}\ [\theta_0,\theta_1]\to E;\, \theta\mapsto u_{0,\theta}, \quad
		{\rm (c)}\ [1,t_1]\to E;\, t\mapsto tu_{0,\theta_1}
		\]
		satisfies 
		\[
		\begin{aligned}
			&u_0 \in \gamma_0([0,1]), \quad 
			L(\gamma_0(1)) \ll -1, \quad \max_{t \in [0,1]} L(\gamma_0(t)) = L(u_0), \\
			&
			L(\gamma_0(t)) = L(u_0) \ \text{if and only if} \ \text{$\gamma_0(t) = u_{0,\theta}$ holds for some $\theta \in [\theta_0,\theta_1]$}. 
		\end{aligned}
		\]
	\end{enumerate}
\end{Proposition}

\begin{Remark}\label{R:opt-path}
		According to \cite{BeLi83I}, under 
		$f \in C(\R)$ with \ref{(f1)}--\ref{(f3)}, any critical point of $L$ satisfies \eqref{eq_paths_g_1} and \eqref{eq_paths_g_2}. 
\end{Remark}

 We conclude by an asymptotic behavior of ground states of the unconstrained problem (see also \cite{JZZ24} for similar properties).
\begin{Lemma}\label{lem_asympt_behav}
	Let $\mu>0$, 
	$f \in C(\R)$ and $f(t) \coloneqq g(t)-\mu t$ be odd and satisfy \ref{(f1)}--\ref{(f3)}. 
	Then there exists a ground state solution $w_\mu$ of \eqref{eq:Psi_mu} such that 
	\begin{equation} \label{eq_wmu_explode} 
		a(\mu) = \Psi_\mu (w_\mu), \quad w_\mu > 0 \quad \text{in} \ \RN, \quad 
		\norm{w_\mu}_\infty\to\infty \quad \text{as}\ \mu\to\infty.
	\end{equation}
\end{Lemma}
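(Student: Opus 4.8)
\textbf{Proof plan for Lemma \ref{lem_asympt_behav}.}

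The plan is to combine a standard variational construction of a positive ground state with a scaling/blow-up argument forced by the $L^2$-critical behavior of $g$ near the origin. First I would produce a ground state: since $f(t)=g(t)-\mu t$ is odd and satisfies \ref{(f0)}--\ref{(f3)}, the Berestycki--Lions theory applies to the functional $\Psi_\mu$ (equivalently $L$ with $F$ replaced by $G-\tfrac{\mu}{2}s^2$). This gives a nontrivial critical point $w_\mu\in E$ with $\Psi_\mu(w_\mu)=a(\mu)$, the mountain pass/least-energy level; by elliptic regularity and the strong maximum principle (as in \cref{Rem-on-g2} (iv)) $w_\mu$ may be taken positive and radially symmetric, and it satisfies the Pohozaev identity \eqref{eq_paths_g_1}. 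The identification of the least-energy level with $a(\mu)$ as defined in \eqref{eq_def_amu} follows from \cref{prop_paths_f} together with \cref{R:opt-path}: any critical point satisfies \eqref{eq_paths_g_1} (and \eqref{eq_paths_g_2} when $N=2$), so an optimal path through $w_\mu$ exists and realizes $a(\mu)$.

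The substantive part is the blow-up statement $\|w_\mu\|_\infty\to\infty$ as $\mu\to\infty$. The natural route is to exploit the $L^2$-criticality: since $g(s)\sim|s|^{p-1}s$ with $p=1+\tfrac4N$ both at $0$ and at $\infty$, comparison with the pure-power problem shows $a(\mu)$ stays bounded (indeed $a(\mu)\to 0$; recall $b(\lambda)=a(\mu)-\mu m_1\to 0$ from \cite{CGIT24}, so $a(\mu)\sim\mu m_1$, but more simply one has uniform bounds on $a(\mu)$ from suitable test functions). Suppose, for contradiction, that $\|w_{\mu_n}\|_\infty\le C$ along some sequence $\mu_n\to\infty$. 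Then $\int g(w_{\mu_n})w_{\mu_n}\,dx\le C'\int w_{\mu_n}^2\,dx$ with $C'$ depending only on $C$ (using $|g(s)|\le C''|s|$ for $|s|\le C$, a consequence of \ref{(f0)} and continuity). Testing \eqref{eq:Psi_mu} against $w_{\mu_n}$ gives $\|\nabla w_{\mu_n}\|_2^2+\mu_n\|w_{\mu_n}\|_2^2=\int g(w_{\mu_n})w_{\mu_n}\le C'\|w_{\mu_n}\|_2^2$, hence $(\mu_n-C')\|w_{\mu_n}\|_2^2\le 0$, forcing $w_{\mu_n}\equiv 0$ for $n$ large — contradicting nontriviality. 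This is clean, and I expect it to be the actual argument: the $L^\infty$-bound is incompatible with $\mu\to\infty$ because the nonlinearity can only be superlinear-at-infinity-sized, and a bounded solution makes $g$ effectively subquadratic, which the large mass term $\mu\|u\|_2^2$ overwhelms.

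The main obstacle, then, is not the blow-up itself but ensuring that the ground state is genuinely obtained at level $a(\mu)$ as defined via the mountain pass geometry in \eqref{eq_def_amu}, and that one may choose it positive; this is where \cref{prop_paths_f} and \cref{R:opt-path} are used, and one must check that \ref{(f0)}--\ref{(f3)} hold for $f(t)=g(t)-\mu t$ for each fixed $\mu>0$ — conditions \ref{(f0)}, \ref{(f2)} are immediate ($f(0)=0$, $f'(0)$-type limit $=-\mu<0$), \ref{(f1)} follows from \ref{(g1*)} since $p<2^*$ when $N\ge 3$ and $p$ is polynomial when $N=2$, and \ref{(f3)} holds because $G(s_0)-\tfrac\mu2 s_0^2>0$ for $s_0$ large thanks to the $|s|^{p+1}$ leading term in $G$. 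A minor additional point, only needed if one wants the sharper asymptotics, is relating $a(\mu)$ to $b(\lambda)$ via \eqref{eq_amu_blambda}; but for the stated conclusion the crude testing argument above suffices and avoids this. I would also remark that uniqueness of the ground state is not claimed — "there exists a ground state solution" — so no rigidity input is required.
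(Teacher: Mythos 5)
Your proposal is correct, and the existence part coincides with the paper's, which simply cites Berestycki--Lions and Jeanjean--Tanaka for a positive ground state $w_\mu \in C^2(\RN)\cap E$ with $\Psi_\mu(w_\mu)=a(\mu)$. For the blow-up you take a genuinely different (though closely related) route: the paper evaluates the equation pointwise at a maximum point $x_\mu$ of $w_\mu$, where $-\Delta w_\mu(x_\mu)\ge 0$ gives $g(w_\mu(x_\mu))\ge \mu\, w_\mu(x_\mu)$, hence $g(s)/s\ge \mu$ at $s=\norm{w_\mu}_\infty$, and boundedness of $g(s)/s$ on bounded sets forces $\norm{w_\mu}_\infty\to\infty$ directly; you instead test the equation with $w_{\mu_n}$ and reach the contradiction $(\mu_n-C')\norm{w_{\mu_n}}_2^2\le-\norm{\nabla w_{\mu_n}}_2^2\le 0$. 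Both rest on the same fact (on a bounded range $g$ is at most linear in $s$, with a constant independent of $\mu$); your integrated version avoids the $C^2$ regularity needed to evaluate $-\Delta w_\mu$ at a point, while the paper's is direct, without contradiction, and gives the quantitative bound $g(\norm{w_\mu}_\infty)/\norm{w_\mu}_\infty\ge\mu$. Two small corrections to your write-up: the bound $\abs{g(s)}\le C''\abs{s}$ for $\abs{s}\le C$ does \emph{not} follow from \ref{(f0)} and continuity alone (continuity with $g(0)=0$ allows e.g.\ $\sqrt{\abs{s}}$ behavior near $0$); what is needed, and what holds here, is that $\limsup_{s\to 0}g(s)/s<\infty$, which is exactly the content of \ref{(f2)} (equivalently of \ref{(g1*)} in the paper's setting), and since $w_{\mu_n}\ge 0$ only the one-sided bound $g(s)s\le C' s^2$ on $[0,C]$, with $C'$ independent of $\mu$, is required. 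Also, the parenthetical claim that $a(\mu)$ stays bounded (indeed tends to $0$) is wrong: by \eqref{eq_amu_blambda} and $b(\lambda)\to 0$ one has $a(\mu)\sim \mu m_1\to\infty$; but, as you yourself note, this aside is not used in your argument, which is otherwise sound.
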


\begin{proof}
From \cite{BeGaKa83,BeLi83I,JT0}, \eqref{eq:Psi_mu} admits a ground state solution $w_\mu \in C^2(\RN)\cap E$ with $a(\mu) = \Psi_\mu(w_\mu)$ 
and $w_\mu > 0$ in $\RN$. 
Let $x_\mu \in \RN$ satisfy $w_\mu(x_\mu) = \norm{w_\mu}_\infty$. From $-\Delta w_\mu(x_\mu) \geq 0$ it follows that 
\[
g \left( w_\mu(x_\mu) \right) = -\Delta w_\mu (x_\mu) + \mu w_\mu(x_\mu) \geq \mu w_\mu(x_\mu) > 0,
\]
which yields $g(w_\mu(x_\mu)) / w_\mu(x_\mu) \to \infty$ as $\mu \to \infty$. Hence, $w_\mu(x_\mu) = \| w_\mu \|_\infty \to \infty$, 
and \eqref{eq_wmu_explode} follows. 
\end{proof}

\medskip
\noindent
\textbf{IV. Power case.}

Recall first the following relation derived from the Gagliardo-Nirenberg inequality \cite[Corollary 2.2]{CGIT24}: 
 \begin{equation} \label{eq_estimate_GN}
 \half\norm{\nabla u}_2^2 \geq {\frac{1}{p+1}}\norm u_{p+1}^{p+1} 
 \qquad \text{for all}\ u\in E\ \text{with}\ \frac{1}{2}\norm u_2^2 \leq m_1.
 \end{equation}
 We set
\[
G_0(s) \coloneqq \frac{1}{p+1} |s|^{p+1}
\]
so that $G= G_0 + H$. Let $\omega_\mu \in E$ be the unique positive solution of 
\[
-\Delta \omega + \mu \omega =\abs \omega^{p-1}\omega \quad \text{in}\ \R^N,
\]
where $\mu>0$ is fixed.
We have 
\begin{equation}\label{eq_omegamu}
\omega_\mu = \mu^{N/4} \omega_1 (\mu^{1/2} \cdot)
\end{equation}
 and
 \begin{equation} \label{eq_scaling_omega} 
 \begin{aligned} 
 &\norm{\omega_\mu}_2^2=2m_1, \quad \norm{\nabla \omega_\mu}_2^2=\mu N m_1, \quad
 \norm{\omega_\mu}_{p+1}^{p+1}=\mu (N+2)m_1, \\
 &\Psi_{0\mu}(\omega_\mu)=\mu m_1, \quad \Psi_{0\mu}'(\omega_\mu)=0, 
 \end{aligned}
 \end{equation}
 where $\Psi_{0\mu}: E\to\R$ is the corresponding action functional
\begin{equation*} 
\Psi_{0\mu}(u) \coloneqq \half\norm{\nabla u}_2^2 +{\frac{\mu}{2}}\norm u_2^2 -{\frac{1}{p+1}}\norm u_{p+1}^{p+1}. 
\end{equation*}
Introduce also the energy functional $\calI_0 : E 
\to \R$ and the Lagrangian functional
$I_0: \R \times E \to \R$
	\begin{equation*}
			\calI_0(u) \coloneqq \half\norm{\nabla u}_2^2-{\frac{1}{p+1}}\norm u_{p+1}^{p+1}, \qquad
			 I_0(\lambda,u) \coloneqq \half\norm{\nabla u}_2^2-{\frac{1}{p+1}}\norm u_{p+1}^{p+1} 
 + e^\lambda \left( \frac{1}{2} \norm{u}_{2}^2 - m_1 \right).
	\end{equation*}
Calling $\ub_0$, $\ob_0$ and $b_0(\lambda)$ the values $\ub$, $\ob$ and $b(\lambda)$ when $G$ is substituted by $G_0$, we obtain \cite[Lemma 4.4]{CGIT24}
 \begin{equation}\label{eq_values_b0} 
 \ub_0 = \ob_0 =0 \quad \text{and}\quad b_0(\lambda)=0 \ \text{for all}\ \lambda\in\R.
 \end{equation}
Finally 
since $I(\lambda,u) = \Psi_{0\mu} (u) - \int_{\RN} H(\omega_\mu) \, dx - \mu m_1$ and $H(s) = o(|s|^{p+1})$ as $s \to 0$ due to \ref{(g1*)}, 
it follows from \eqref{eq_scaling_omega} that 
\begin{equation}\label{eq_lim_lam_omegamu} 
\lim_{\lambda\to -\infty}I(\lambda,\omega_\mu)=0.
\end{equation}

\section{Examples of $g$ for $\ub < 0$ and $\ob > 0$: proof of Theorem \ref{thm_examples}} 
\label{Section_examples} 

In this subsection, we give some examples of $g$ for which 
either $\ub < 0=\ob$ or else $\ub=0<\ob$ holds; in the first case, we can ensure the existence of an $L^2$-minimum. 
Furthermore, we also provide an example of $g$ satisfying \ref{(g0')}, \ref{(g1*)} with $\alpha=0$, 
\ref{(g3)} and $\ub < 0 < \ob$, and hence 
we may find at least two solutions of \eqref{eq_main} with $m=m_1$ for such a $g$ 
(see \cref{thm_old_ex_precise}).

\subsection{Case $\ub=0$ or $\ob=0$}

We start with simple examples. Recall $G_0(s)=\frac{1}{p+1}|s|^{p+1}$ and relation \eqref{eq_b_blambda}.

\begin{Proposition} \label{prop_b_neg_post} 
Assume \ref{(g0')} and \ref{(g1*)} with $\alpha=0$. 
The following facts hold.
	\begin{enumerate}[label={\rm (\roman*)}]
		\item Assume 
		\begin{equation*}
			\begin{aligned}
				&G(s) \geq G_0(s) \quad \text{for all}\ s\in\R, 
				\quad G(s_0) > G_0(s_0) \quad \text{for some}\ s_0\in\R.
			\end{aligned}
		\end{equation*}
		Then 
		\begin{equation*}
			b(\lambda)<0 \quad \text{for large}\ \lambda \quad \text{and} \quad 
			\ub <0 = \ob.
		\end{equation*}
		\item Assume 
		\begin{equation*}
			\begin{aligned}
				&G(s) \leq G_0(s) \quad \text{for all}\ s\in\R, 
				\quad 
				G(s_0) < G_0(s_0) \quad \text{for some}\ s_0\in\R.
			\end{aligned}
		\end{equation*}
		Then 
		\begin{equation*}
			b(\lambda)>0 \quad \text{for large}\ \lambda \quad \text{and} \quad 
			\ub =0 < \ob.
		\end{equation*}		
	\end{enumerate}
\end{Proposition}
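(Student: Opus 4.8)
The plan is to push everything through the fixed-frequency picture. Recall $b(\lambda)=a(\mu)-\mu m_1$ by \eqref{eq_amu_blambda}, $\ub=\inf_{\lambda\in\R}b(\lambda)$ by \eqref{eq_lagr_first}, $\ub\le b(\lambda)\le\ob$ by \eqref{eq_b_blambda}, $\ub\le 0\le\ob$ by \eqref{eq_general_est_b} (with $\alpha=0$), and $b(\lambda)\to0$ as $\lambda\to\pm\infty$. Writing $G=G_0+H$: in case (i) $H\ge0$, so $\Psi_\mu\le\Psi_{0\mu}$, $\Lambda_{0\mu}\subseteq\Lambda_\mu$, hence $a(\mu)\le a_0(\mu)=\mu m_1$, i.e. $b(\lambda)\le0$ for all $\lambda$; in case (ii) $H\le0$, so $\Psi_\mu\ge\Psi_{0\mu}$, $\Lambda_\mu\subseteq\Lambda_{0\mu}$, hence $a(\mu)\ge\mu m_1$, i.e. $b(\lambda)\ge0$ for all $\lambda$ — so in case (ii) $\ub=\inf_\lambda b(\lambda)\ge0$ and thus $\ub=0$. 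What is left is, in each case, to make the inequality on $b(\lambda)$ strict for $\lambda$ large, and to identify $\ob$.

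For case (i), fix $\mu$ large and apply \cref{prop_paths_f} to $f_0(s)=|s|^{p-1}s-\mu s$ (which satisfies \ref{(f0)}--\ref{(f3)}) with $u_0=\omega_\mu$, obtaining a path $\gamma_0\in C([0,1],E)$ with $\gamma_0(0)=0$, $\Psi_{0\mu}(\gamma_0(1))\ll-1$ and $\max_t\Psi_{0\mu}(\gamma_0(t))=\Psi_{0\mu}(\omega_\mu)=\mu m_1$ (by \eqref{eq_scaling_omega}), the maximum being attained only at $\omega_\mu$, respectively at the dilations $\omega_\mu(\cdot/\theta)$, $\theta\in[\theta_0,\theta_1]$, when $N=2$. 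Along $\gamma_0$ one has $\Psi_\mu(\gamma_0(t))=\Psi_{0\mu}(\gamma_0(t))-\intRN H(\gamma_0(t))\,dx\le\mu m_1$, and equality at a point $t$ would force $\Psi_{0\mu}(\gamma_0(t))=\mu m_1$ (so $\gamma_0(t)$ is $\omega_\mu$ or a dilation of it) together with $\intRN H(\gamma_0(t))\,dx=0$; but $H\ge0$ is continuous with $H(s_0)>0$ at some $s_0$ (w.l.o.g. $s_0>0$, since $H$ is even and $H(0)=0$) and $\norm{\omega_\mu}_\infty=\mu^{N/4}\norm{\omega_1}_\infty\to\infty$ by \eqref{eq_omegamu}, so $\intRN H(\omega_\mu(\cdot/\theta))\,dx=\theta^N\intRN H(\omega_\mu)\,dx>0$ for $\mu$ large. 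Thus $\Psi_\mu(\gamma_0(t))<\mu m_1$ for every $t$, hence (by compactness) $\max_t\Psi_\mu(\gamma_0(t))<\mu m_1$; as $\gamma_0\in\Lambda_\mu$, $a(\mu)<\mu m_1$, i.e. $b(\lambda)<0$ for $\lambda$ large, and $\ub=\inf_\lambda b(\lambda)<0$. Finally $\ob=0$: $\ob\ge0$ by \eqref{eq_general_est_b}; for the converse, $I\le I_0$ pointwise, and taking a base function $\zeta_0$ built from the power problem with $\Psi_{0\mu}(\zeta_0(\lambda))$ negative enough, $\zeta_0$ is admissible for $G$ as well ($\Psi_\mu(\zeta_0(\lambda))\le\Psi_{0\mu}(\zeta_0(\lambda))$ is small enough, $\tfrac12\norm{\zeta_0(\lambda)}_2^2>m_1$, and $\max_t I(\lambda,t\zeta_0(\lambda))\to0$ by squeezing between $b(\lambda)\to0$ and $\max_t I_0(\lambda,t\zeta_0(\lambda))\to0$), so $\oGamma_G=\oGamma_{G_0}$ and $\ob=\inf_{\oGamma}\sup I\le\inf_{\oGamma}\sup I_0=\ob_{G_0}=0$ (cf. \eqref{eq_values_b0}).

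For case (ii), testing along a single path only yields an upper bound for $a(\mu)$, so instead I would project onto the Pohozaev manifold. With $f(s)=g(s)-\mu s$ satisfying \ref{(f0)}--\ref{(f3)}, \cref{lem_asympt_behav} gives a ground state $w_\mu>0$ of $\Psi_\mu$ with $a(\mu)=\Psi_\mu(w_\mu)$ and $\norm{w_\mu}_\infty\to\infty$; by \cref{R:opt-path}, $w_\mu$ lies on $\mathcal P_\mu=\{u\neq0\mid\tfrac{N-2}{2}\norm{\nabla u}_2^2+\tfrac{N\mu}{2}\norm u_2^2=N\intRN G(u)\,dx\}$, on which $\Psi_\mu(u)=\tfrac1N\norm{\nabla u}_2^2$, and likewise for the power functional on $\mathcal P_0$, with $\inf_{\mathcal P_0}\Psi_{0\mu}=a_0(\mu)=\mu m_1$. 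Because $H\le0$, $H(s_0)<0$ at some $s_0>0$ and $\norm{w_\mu}_\infty\to\infty$, we have $\intRN H(w_\mu)\,dx<0$ for $\mu$ large; hence $\intRN G_0(w_\mu)\,dx>\tfrac\mu2\norm{w_\mu}_2^2$ strictly, and a short computation shows that the (unique) dilation $w_\mu(\cdot/\theta)$ landing on $\mathcal P_0$ — for $N=2$, where the Dirichlet integral is scale invariant, the combined rescaling $\sigma\,w_\mu(\cdot/\theta)$ — has parameter strictly less than $1$, so $\norm{\nabla w_\mu}_2^2>N\mu m_1$ and $a(\mu)=\tfrac1N\norm{\nabla w_\mu}_2^2>\mu m_1$, i.e. $b(\lambda)>0$ for $\lambda$ large. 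Then $\ob\ge\sup_\lambda b(\lambda)>0$ by \eqref{eq_b_blambda}, and with the first paragraph $\ub=0<\ob$.

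The two steps I expect to cost the most work are: in case (ii), executing the Pohozaev projection uniformly in $\mu$ and correctly in dimension $N=2$ (where the amplitude–dilation rescaling is forced) — the projection parameter being $<1$ is exactly what $\intRN H(w_\mu)\,dx<0$ buys, which in turn rests on $\norm{w_\mu}_\infty\to\infty$; and in case (i), the identification $\ob=0$ — here $\ob\ge0$ is free, but $\ob\le\ob_{G_0}$ requires the two minimax constructions to run on a common base path, hence the extra margin of negativity in the choice of $\zeta_0$.
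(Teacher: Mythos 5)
Your proposal is correct in substance, and it splits into one half that coincides with the paper and one half that takes a genuinely different route. For case (i) you run exactly the mirror image of what the paper writes out for case (ii): take the optimal path from \cref{prop_paths_f} through the ground state of the \emph{comparison} problem (here $\omega_\mu$), use $\norm{\omega_\mu}_\infty\to\infty$ and the evenness of $H$ to get $\intRN H(\omega_\mu(\cdot/\theta))\,dx>0$ strictly for large $\mu$, and deduce a strict inequality between the two functionals along the whole path; this is precisely what the paper compresses into ``the proof of the other case is similar''. For case (ii), where the paper compares $\Psi_{0\mu}$ along the optimal path of $\Psi_\mu$ through $w_\mu$ (exploiting that the maximum there is attained only at $w_\mu$, resp.\ its dilations), you instead project $w_\mu$ onto the Pohozaev set of the pure power problem: the Pohozaev identity of $w_\mu$ together with $\intRN H(w_\mu)\,dx<0$ forces the projection parameter to be $<1$, hence $a(\mu)=\tfrac1N\norm{\nabla w_\mu}_2^2>\inf_{\mathcal P_0}\Psi_{0\mu}=\mu m_1$. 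This is a valid alternative; it trades the ``equality only at dilations'' refinement of \cref{prop_paths_f} for the identification of $\mu m_1$ as the Pohozaev-set minimum of $\Psi_{0\mu}$ (which itself follows from \cref{prop_paths_f}, \eqref{eq_scaling_omega}, and the automatic verification of \eqref{eq_paths_g_2} on that set when $N=2$). Both routes rest on the same inputs: \cref{lem_asympt_behav}, strictness of the $H$-integral for large $\mu$, and \eqref{eq_b_blambda}, \eqref{eq_general_est_b}, \eqref{eq_lagr_first}; your derivation of $\ub=0$ in case (ii) from $a(\mu)\geq a_0(\mu)$ and \eqref{eq_lagr_first} is the intended bookkeeping.

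The one soft spot, which you flag yourself, is the identification $\ob=0$ in case (i). You argue via a common base map $\zeta_0$, the resulting equality of the classes in \eqref{def_ob_oG}, and then quote $\ob_{G_0}=0$. But \eqref{eq_values_b0} is stated for the class built on the power-case construction; once $\zeta_0$ is modified (extra negativity margin to be admissible for $G$), the statement $\inf_{\oGamma}\sup I_0=0$ for the \emph{new} class is not automatic and has to be re-derived or obtained from an independence-of-construction result in \cite{CGIT24} — this is exactly the construction-dependence the paper itself is careful about before \cref{Thm:Exw/og2} and in \cref{Lem_zeta1rho1}, where a connecting piece of the base map is not a multiple of $\omega_\mu$ and would spoil the naive computation $\sup_t I_0(\lambda,t\zeta_0(\lambda))=0$. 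Since the published proof also delegates this point to ``similar''/\cite{CGIT24}, I would not call it a fatal gap of your argument relative to the paper, but as written the step $\inf_{\oGamma}\sup I_0=\ob_{G_0}$ needs either a base map consisting of multiples of $\omega_\mu$ for every $\lambda$, or an argument that the minimax value does not depend on the admissible choice of $\zeta_0$.
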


Examples of $G$ satisfying the abovementioned conditions together with \ref{(g3)} are straightforward: it is sufficient to consider
 $G(s)= A
(s) G_0(s)$ where 
\[A(s)=
\begin{dcases} 
	1 \pm \varepsilon e^{\frac{1}{(\abs{s}-2)^2-1}} & \hbox{for $\abs{s} \in (1,3)$}, \\ 
	1 &\hbox{elsewhere},
\end{dcases}\]
 and $\varepsilon >0$ is small enough. Notice anyway that, if $\ub <0$, then by \cref{thm_old_ex_precise}, assumption \ref{(g3)} is not needed to get the existence of a solution and thus the choice of $G$ is much more free.
 
\begin{proof}[Proof of \cref{prop_b_neg_post}]
	We show (ii) since the proof of the other case is similar. 
Recalling \eqref{eq_b_blambda}, it
suffices to show $b(\lambda)>0$ for 
	large $\lambda\in\R$. 
By \cref{lem_asympt_behav}, let $w_\mu \in E$ be a ground state of \eqref{eq:Psi_mu} with $\mu=e^{\lambda}$, 
and choose a large $\lambda>1$ such that $G \not \equiv G_0$ on 
	$[0,\norm{w_\mu}_\infty]$. This choice yields 
	\begin{equation} \label{eq_dim_Gwmu} 
		\intRN G \left( w_\mu \left( x/ \theta \right) \right) dx < \intRN G_0 \left( w_\mu (x / \theta) \right) dx 
		\quad \text{for any $\theta > 0$}.
	\end{equation}
By \cref{prop_paths_f} and \cref{lem_asympt_behav}, there exists a path $\gamma_0 \in \Lambda_\mu$ 
such that for some $0< \theta_0 < 1 < \theta_1$, 
\begin{equation*}
	\begin{aligned}
		&\max_{t\in [0,1]}\Psi_\mu(\gamma_0(t)) = \Psi_\mu(w_\mu)=a(e^\lambda),
		\\
		&\Psi_\mu(\gamma_0(t))=a(e^\lambda) \quad \text{if and only if} \quad 
		\begin{dcases}	\gamma_0(t)=w_\mu	&\text{when}\ N\geq 3,\\
			\gamma_0(t)\in \big\{w_\mu(\cdot/\theta) \mid \theta\in[\theta_0,\theta_1]\big\}
			&\text{when}\ N=2.
		\end{dcases}
	\end{aligned}
\end{equation*}
Now we can show
\begin{equation} \label{eq_dim_psi0_a} 
	\Psi_{0\mu}(\gamma_0(t)) < a(e^\lambda) \quad \text{for all}\ t \in [0,1].
\end{equation}
	In fact, when $\Psi_\mu(\gamma_0(t))=a(e^\lambda)$, we have 
	$\gamma_0(t)=w_\mu(\cdot/\theta)$ ($\theta=1$ or $\theta\in [\theta_0,\theta_{1}
]$) 
	and by \eqref{eq_dim_Gwmu},
	\begin{equation*}
		\Psi_{0\mu}(\gamma_0(t))=\Psi_{0\mu}(w_\mu(\cdot/\theta)) 
		< \Psi_\mu(w_\mu(\cdot/\theta)) =a(e^\lambda).
	\end{equation*}
	When $\Psi_\mu(\gamma_0(t))<a(e^\lambda)$, it is immediate to see that 
	\begin{equation*}
		\Psi_{0\mu}(\gamma_0(t))\leq \Psi_\mu(\gamma_0(t)) <a(e^\lambda).
	\end{equation*}
Thus \eqref{eq_dim_psi0_a} holds. 
Since $\omega_{\mu}$ is obtained via the mountain pass theorem and $\gamma_0 \in \Lambda_\mu$, 
by \eqref{eq_scaling_omega} and $\Psi_{0\mu} (\gamma_0(1)) \leq \Psi_\mu(\gamma_0(1)) < 0$, 
we obtain 
\begin{equation*}
	 \mu m_1 = \Psi_{0\mu} (\omega_\mu) 
	\leq \max_{t\in [0,1]} \Psi_{0\mu}(\gamma_0(t)) < a(e^\lambda),
\end{equation*}
	which implies (recall \eqref{eq_amu_blambda} and \eqref{eq_values_b0})
	\begin{equation*}
		0=b_0(\lambda)< b(\lambda).
	\end{equation*}
Therefore, \eqref{eq_b_blambda} yields $\ob > 0$. 
We remark that $\ub \geq 0$ follows from \eqref{eq_values_b0}, $I_0(\lambda,u) \leq I(\lambda,u)$ and $0 = b_0(\lambda) \leq b(\lambda)$ for all $\lambda \in \R$. 
On the other hand, since $\ub \leq 0$ 
holds by \eqref{eq_general_est_b}, we have $\ub = 0$. 
\end{proof}

\subsection{Case $\ub < 0 < \ob$}

	Finally we provide an example of $g$ satisfying \ref{(g0')}, \ref{(g1*)} with $\alpha=0$, 
	\ref{(g3)} and $\ub < 0 < \ob$. 
To this end, let us consider 
\begin{equation}\label{def:HG-a}
	H(s) \coloneqq \frac{A(s)}{p+1} |s|^{p+1}, \quad G(s) = \frac{1}{p+1}|s|^{p+1} + H(s) = \frac{1+A(s)}{p+1} |s|^{p+1},
\end{equation}
where 
$A \in C^1(\R)$ 
is an even function and satisfies the following conditions: for $\alpha \in \left[ - \frac{1}{2} , \frac{1}{2} \right]$ and $L > 1$, 
	\begin{align}
		&-\frac{1}{2} \leq A(s) \leq \frac{1}{2} \quad \text{for all $s \in [0,\infty)$}, \label{eq_propr_a_1} 
		\\
		&\text{$A(s) = 0$ in neighborhoods of $0$ and $+\infty$}, \label{eq_propr_a_2} 
		\\
		&
		\left|A'(s) s\right| < \frac{1}{N^2},
		\label{eq_propr_a_3} 
		\\
		&A(s) = \alpha \quad \text{for all $s \in \left[ \frac{1}{L}, \, L \right]$}. \label{eq_propr_a_4} 
	\end{align}

We begin to prove the existence of $A$ satisfying \eqref{eq_propr_a_1}--\eqref{eq_propr_a_4}. 

\begin{Lemma} \label{lem_exist_a} 
	Let $L > 1$ and $\alpha \in \left[ - \frac{1}{2} , \frac{1}{2} \right]$. 
	If $ 0 \leq \alpha \leq \frac{1}{2}$ (resp. $- \frac{1}{2} \leq \alpha \leq 0$), 
	then there exists an even non-negative (resp. non-positive) function $A(\cdot)$ satisfying \eqref{eq_propr_a_1}--\eqref{eq_propr_a_4}. 
\end{Lemma}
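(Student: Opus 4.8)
The plan is to construct $a$ explicitly by interpolating between the constant value $\alpha$ on $[1/L, L]$ and the value $0$ near the origin and near $+\infty$, using a smooth cutoff whose derivative we can control so as to satisfy \eqref{eq_propr_a_3}. The key point is that \eqref{eq_propr_a_3} is a scale-invariant bound on $a'(s)s$, so the natural ansatz is to make $a$ depend on $s$ only through $\log s$ on the transition regions; then $a'(s)s = \frac{d}{d(\log s)} a$, and we just need the logarithmic derivative of the profile to be uniformly smaller than $1/N^2$, which we can always achieve by spreading the transition over a sufficiently long interval in the $\log s$ variable.

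Concretely, I would proceed as follows. First, fix a function $\psi \in C^1(\R)$ with $\psi \equiv 0$ on $(-\infty,0]$, $\psi \equiv 1$ on $[1,\infty)$, $0 \leq \psi \leq 1$, and $\|\psi'\|_\infty \leq 2$ (a standard mollified ramp). Next, choose a large parameter $T>0$, to be fixed below, and on $s>0$ define
\[
a(s) \coloneqq \alpha\, \psi\!\left( \frac{\log(Ls)}{T} \right) \cdot \psi\!\left( \frac{\log(L/s)}{T} \right) \cdot \frac{1}{\alpha},
\]
wait — more cleanly, set
\[
a(s) \coloneqq \alpha\, \Phi(s), \qquad \Phi(s) \coloneqq \psi\!\left( \frac{\log s + \log L}{T} \right) - \psi\!\left( \frac{\log s - \log L - T}{T} \right),
\]
where the two terms are arranged so that $\Phi \equiv 1$ on $[1/L, L]$ (once $T$ is chosen), $\Phi \equiv 0$ for $s$ near $0$ and for $s$ large, and $0 \leq \Phi \leq 1$ everywhere; then extend $a$ to $\R$ as an even function, noting $a(0)=0$ so the even extension is $C^1$ at the origin since $a$ vanishes identically near $0$. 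The sign condition \eqref{eq_propr_a_1} holds because $|a| = |\alpha|\,|\Phi| \leq \frac12$, and $a \geq 0$ when $\alpha \geq 0$, $a \leq 0$ when $\alpha \leq 0$; \eqref{eq_propr_a_2} holds by construction; \eqref{eq_propr_a_4} holds on $[1/L, L]$ once $T$ is large enough that the two cutoffs have "switched" before and after that interval.

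For \eqref{eq_propr_a_3}, compute $a'(s)s = \alpha \Phi'(s) s$, and since $\frac{d}{ds}\psi\big(\frac{\log s + c}{T}\big) \cdot s = \frac{1}{T}\psi'\big(\frac{\log s + c}{T}\big)$, we get $|a'(s)s| \leq |\alpha|\cdot \frac{1}{T}\big(\|\psi'\|_\infty + \|\psi'\|_\infty\big) \leq \frac12 \cdot \frac{4}{T} = \frac{2}{T}$. Choosing $T > 2N^2$ gives $|a'(s)s| < \frac{1}{N^2}$, as required. This completes the construction. The only mild subtlety — the part I would be most careful about — is verifying that the two shifted cutoffs can be positioned so that $\Phi \equiv 1$ on all of $[1/L,L]$ while still being supported in a bounded region of $\log s$ and returning to $0$; this forces a compatibility between $T$ and $L$ (essentially $T$ must be large, and the support placed far enough out), but since $L$ is given and fixed this is just a matter of choosing the shift parameters in the argument of the second $\psi$ appropriately, e.g. replacing $-T$ by $-(T + 2\log L)$ so the "off" region of the second cutoff comfortably contains $[1/L, L]$. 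No step here presents a genuine obstacle; it is an explicit existence construction, and the scale-invariance of \eqref{eq_propr_a_3} is exactly what makes the logarithmic-variable ansatz work.
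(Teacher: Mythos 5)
Your construction is correct and essentially the paper's: the paper defines $a_0$ piecewise, equal to $\alpha$ on $[1/(L+1),L+1]$ and linear in $\log s$ with slope $\tau=1/(2N^2)$ on the transition regions (so that $|a_0'(s)s|\le\tau<1/N^2$), then mollifies to obtain $a\in C^1$, which is exactly the scale-invariant log-variable idea you exploit, only with smooth ramps in place of mollification. The one blemish is that, as written, your first cutoff equals $0$ rather than $1$ at $s=1/L$, so the extra shift must be placed in the first $\psi$ (e.g. $\psi\bigl((\log s+\log L+T)/T\bigr)$) rather than in the second; this is the cosmetic adjustment you already flagged and does not affect the derivative bound $|a'(s)s|\le 2/T<1/N^2$ for $T>2N^2$.
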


\begin{proof}
Write $\tau \coloneqq 1/(2N^2)$ and set 
\[
A_0(s) 
\coloneqq \begin{dcases}
	\alpha + \left( \sgn \alpha \right) \tau 
	\big( \log (L+1) + \log s \big) &\text{for} \ s \in \left[ \exp \left( - \frac{\abs{\alpha}}{\tau} - \log (L+1) \right), \, \frac{1}{L+1} \right],
	\\
	\alpha & \text{for} \ s\in \left[ \frac{1}{L+1} , \, L+1 \right], 
	\\
	\alpha + \left(\sgn \alpha\right) \tau \left( - \log s + \log \big(L+1\right) \big)
	& \text{for} \ s \in \left[L+1, \, \exp \left( \frac{\abs{\alpha}}{\tau} + \log \left(L+1\right) \right)\right],
\end{dcases}
\]
and $A_0(s)\coloneqq 0$ for any other $s\geq 0$; for $s \leq 0$, we extend 
$A_0(s) \coloneqq A_0(-s)$. 
Notice that $A_0$ is a Lipschitz function and satisfies \eqref{eq_propr_a_1}, \eqref{eq_propr_a_2}, \eqref{eq_propr_a_4} on $[1/(L+1), L+1]$ 
and $|A_0'(s) s| \leq \tau $ at $s$ where $A_0$ is differentiable. 
After mollifying $A_0$, we get $A \in C^1(\R)$ enjoying also \eqref{eq_propr_a_3}. 
\end{proof}

We next prove that $G$ with $g \coloneq G'$ in \eqref{def:HG-a} has the following properties. 

\begin{Lemma}\label{Lem:AR}
	Let $A \in C^1(\R)$ be an even function enjoying \eqref{eq_propr_a_1}--\eqref{eq_propr_a_4}. 
	Then $g$ in \eqref{def:HG-a} satisfies \ref{(g0')}, \ref{(g1*)} with $\alpha=0$, \ref{(g3)} and the Ambrosetti-Rabinowitz condition: 
	\[
	g(s)s \geq \theta 
G(s) > 0 \quad \text{for any $s \in \R \setminus \{0\}$},
	\]
where $\theta=\frac{2(N^2+2N-1)}{N^2} \in (2, p+1)$.
\end{Lemma}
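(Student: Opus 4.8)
The plan is to make every relevant quantity explicit in terms of $a$ and $a'$ and then read off each of the four conclusions from a one-line pointwise estimate in which the bounds \eqref{eq_propr_a_1}, \eqref{eq_propr_a_2}, \eqref{eq_propr_a_3} are used. Writing $G_0(s)=\frac{1}{p+1}|s|^{p+1}$, so that $h=G'-G_0'=H'$, a direct differentiation of $H(s)=\frac{a(s)}{p+1}|s|^{p+1}$ gives, for every $s\neq 0$,
\[
h(s)=|s|^{p-1}s\Big(a(s)+\tfrac{a'(s)s}{p+1}\Big),\qquad g(s)s=|s|^{p+1}\Big(1+a(s)+\tfrac{a'(s)s}{p+1}\Big),\qquad G(s)=\tfrac{1+a(s)}{p+1}\,|s|^{p+1},
\]
and all three vanish at $s=0$; I would record these formulas first, and then everything is arithmetic with $p=1+\frac{4}{N}$.

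For \ref{(g0')} I note that $s\mapsto |s|^{p+1}$ is $C^1$ (since $p+1>1$), hence $H\in C^1(\R)$ and $g=G'$ is continuous, while $a$ even makes $G$ even and $g$ odd. For \ref{(g1*)} with parameter $0$: by \eqref{eq_propr_a_2} both $a$ and $a'$ vanish on a neighborhood of $0$ and on a neighborhood of $+\infty$, so $h\equiv 0$ there, and the two limits in \ref{(g1*)} are automatically $0$; in particular the limit at $+\infty$ is $0$.

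For the Ambrosetti--Rabinowitz inequality, for $s>0$ I divide by $|s|^{p+1}>0$ and use $\frac{p+3}{2(p+1)}=\frac{N+1}{N+2}$ to reduce it, after multiplying by $N+2$ and using $\frac{1}{p+1}=\frac{N}{2(N+2)}$, to $1+a(s)\geq -\frac{N}{2}a'(s)s$; here $1+a(s)\geq\frac12$ by \eqref{eq_propr_a_1} while $\big|\frac{N}{2}a'(s)s\big|<\frac{1}{2N}\leq\frac12$ by \eqref{eq_propr_a_3}, so the inequality holds (strictly), and $G(s)=\frac{1+a(s)}{p+1}|s|^{p+1}\geq\frac{1}{2(p+1)}|s|^{p+1}>0$. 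For \ref{(g3)}, again dividing by $|s|^{p+1}$ for $s>0$, the inequality becomes $(1+a(s))\big(\frac{1}{p+1}-\frac{N-2}{2N}\big)\geq\frac{N-2}{2N}\cdot\frac{a'(s)s}{p+1}$; the coefficient on the left equals $\frac{2}{N(N+2)}$, so the left side is $\geq\frac{1}{N(N+2)}$, whereas the right side is bounded in absolute value by $\frac{N-2}{4(N+2)}\cdot\frac{1}{N^2}<\frac{1}{4N^2}\leq\frac{1}{N(N+2)}$ (the last comparison being equivalent to $3N\geq2$; for $N=2$ the right side is simply $0$). Finally, since $g(s)s$ and $G(s)$ are both even in $s$ once \ref{(g0')} is known, the cases $s<0$ follow from $s>0$.

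I do not expect any genuine obstacle here: the pure power $G_0$ satisfies \ref{(g3)} and the Ambrosetti--Rabinowitz inequality with a strict quantitative margin, and the bounds \eqref{eq_propr_a_1} and \eqref{eq_propr_a_3} on $a$ (together with $p-1=\frac{4}{N}$) are calibrated exactly so that the perturbation $H$ stays inside that margin. The only care needed is the bookkeeping of constants -- the identities $\frac{p+3}{2(p+1)}=\frac{N+1}{N+2}$ and $\frac{1}{p+1}-\frac{N-2}{2N}=\frac{2}{N(N+2)}$ and the comparison $\frac{1}{4N^2}\leq\frac{1}{N(N+2)}$ -- which is where one must be slightly careful.
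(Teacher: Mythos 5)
Your proposal is correct and follows essentially the same route as the paper: writing $G(s)=\frac{1+a(s)}{p+1}|s|^{p+1}$, computing $g(s)s-\frac{p+3}{2}G(s)$ and $G(s)-\frac{N-2}{2N}g(s)s$ explicitly in terms of $a$ and $a'$, and bounding them pointwise via \eqref{eq_propr_a_1} and \eqref{eq_propr_a_3}, with \eqref{eq_propr_a_2} giving \ref{(g0')} and \ref{(g1*)} with $\alpha=0$. Your constants (e.g.\ the reduction to $1+a(s)\geq-\frac{N}{2}a'(s)s$ and the comparison $\frac{1}{4N^2}\leq\frac{1}{N(N+2)}$) check out, so this is just a cosmetically different normalization of the paper's estimates.
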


From the proof below, 
the constant $\theta=\frac{2(N^2+2N-1)}{N^2}$ in \cref{Lem:AR} may be improved and approaches $p+1$ 
by changing $\frac{1}{N^2}$ in \eqref{eq_propr_a_3} to smaller constants. 

\begin{proof}[Proof of \cref{Lem:AR}]
Conditions \ref{(g0')} and \ref{(g1*)} with $\alpha = 0$ clearly hold by \eqref{def:HG-a} and \eqref{eq_propr_a_2}. 
On the other hand, direct computations, \eqref{eq_propr_a_1} and \eqref{eq_propr_a_3} lead to 
\[
\begin{aligned}
	G(s) - \frac{N-2}{2N} g(s) s 
	&= 
	\left( \frac{2}{N(N+2)} \big(1+A(s)\big) - \frac{N-2}{4(N+2)} A'(s) s \right) \abs{s}^{p+1}
	\\
	&\geq \left( \frac{1}{N(N+2)} - \frac{N-2}{4(N+2)} \frac{1}{N^2} \right) \abs{s}^{p+1} 
	= \frac{1}{N(N+2)} \left( 1 - \frac{N-2}{4N} \right) \abs{s}^{p+1} \geq 0.
\end{aligned}
\]
Thus, \ref{(g3)} holds. 
Finally, from \eqref{eq_propr_a_1}, \eqref{eq_propr_a_3} and 
\[
g(s) s - \theta G(s) = \left( \frac{1}{p+1} A'(s) s + \left( 1 - \frac{\theta}{p+1} \right) \big(1+A(s)\big) \right) \abs{s}^{p+1} 
\geq \frac{1}{2} \left( 1 - \frac{\theta}{p+1} - \frac{2}{N^2(p+1)} \right) |s|^{p+1},
\]
a straightforward computation together with the definition of $\theta$  implies the Ambrosetti-Rabinowitz condition. 
The proof is completed. 
\end{proof}

For an even function $A $ with \eqref{eq_propr_a_1}--\eqref{eq_propr_a_4}, define $I(A ; \cdot, \cdot): \R \times E \to \R$ by (recall $\mu=e^{\lambda}$)
	\[
		I(A;\lambda, u) \coloneqq 
		\frac{1}{2} \| \nabla u \|_2^2 - \frac{1}{p+1} \int_{\RN} \big( 1 + A(u) \big) |u|^{p+1} \, dx 
		+ \mu \left( \frac{1}{2} \| u \|_2^2 - m_1 \right)
	\]
and write $b(A;\lambda)$ for the mountain pass value of $u \mapsto I(A;\lambda, u)$. 
For $u \in E$ and $\lambda \in \R$, we introduce 
	\[
		u_\lambda \coloneqq \mu^{N/4} u \big( \mu^{1/2} \cdot \big). 
	\]
Then it is easily seen that 
	\[
		\begin{aligned}
			I(A; \lambda, u_\lambda) 
			&= 
			\mu \left( \frac{1}{2} \| \nabla u \|_2^2 + \frac{1}{2} \| u \|_2^2 
			- \frac{1}{p+1} \int_{\RN} \left( 1 + A \big( \mu^{N/4} u \big) \right) |u|^{p+1} \, dx - m_1
			\right)
			\\
			&\eqqcolon \mu \left( K(A;\lambda, u) - m_1 \right) . 
		\end{aligned}
	\]
Let $\beta(A;\lambda)$ stand for the mountain pass value of $u \mapsto K(A;\lambda,u)$. 
From \eqref{eq_b_blambda} and $b(A;\lambda) = \mu ( \beta (A;\lambda) - m_1 ) $, 
it is enough to find $A$, $\lambda_1$ and $\lambda_2$ such that 
	\begin{equation} \label{eq_beta_12} 
		\beta (A; \lambda_1) < m_1 < \beta (A;\lambda_2). 
	\end{equation}
	To find such $\lambda_1$ and $\lambda_2$, we remark that for $\alpha \in [-\frac{1}{2} , \frac{1}{2}]$, a least energy solution of 
	\begin{equation} \label{eq_schr_alpha} 
		-\Delta u + u = (1+\alpha) |u|^{p-1} u \quad \text{in} \ \RN
	\end{equation}
is given by $(1+\alpha)^{-1/(p-1)} \omega_1 $. Thus, the mountain pass value of the functional corresponding to \eqref{eq_schr_alpha} is 
	\[
		(1+\alpha)^{ - 2/ (p-1) } m_1. 
	\]

	To proceed, we need the following result.
	\begin{Lemma} \label{lem_Leps_alpha} 
		Let $\alpha \in \left[ -\frac{1}{2}, \frac{1}{2} \right]$. 
		For any $\varepsilon > 0$ there exists $L_\varepsilon > 1$ such that for each $L \geq L_\varepsilon$ and 
		even function $A \in C^1(\R)$ with \eqref{eq_propr_a_1}--\eqref{eq_propr_a_4}, the value $\beta(A;0)$ satisfies 
		\[
			\left| \beta(A;0) - \left( 1 + \alpha \right)^{-2/(p-1)} m_1 \right| < \varepsilon. 
		\]
	\end{Lemma}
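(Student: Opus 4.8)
The plan is to show that, as $L\to\infty$, the mountain pass value $\beta(a;0)$ converges to the mountain pass value of the limiting functional corresponding to \eqref{eq_schr_alpha}, uniformly over all admissible $a$ satisfying \eqref{eq_propr_a_1}--\eqref{eq_propr_a_4}. Recall that $\beta(a;0)$ is the mountain pass value of $u\mapsto K(a;0,u) = \half\norm{\nabla u}_2^2 + \half\norm u_2^2 - \frac{1}{p+1}\intRN\big(1+a(u)\big)|u|^{p+1}\,dx$ (here $\mu = 1$, so $u_\lambda = u$). Since $a(s)=\alpha$ on $[1/L,L]$ by \eqref{eq_propr_a_4} and $|a(s)|\le\half$ everywhere by \eqref{eq_propr_a_1}, the functional $K(a;0,\cdot)$ differs from the limiting functional $K_\alpha(u) \coloneqq \half\norm{\nabla u}_2^2 + \half\norm u_2^2 - \frac{1+\alpha}{p+1}\norm u_{p+1}^{p+1}$ only through the region where $|u|\notin[1/L,L]$, and there the discrepancy in the potential term is controlled by $\frac{1}{p+1}|a(u)-\alpha|\,|u|^{p+1}\le\frac{1}{p+1}|u|^{p+1}\mathbf 1_{\{|u|<1/L\}} + \frac{2^{-1}}{p+1}|u|^{p+1}\mathbf 1_{\{|u|>L\}}$, wait --- for $|u|>L$ we still have $|a(u)|\le\half$, so the bound is uniform but not small there; I will instead work directly with explicit test paths.

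The cleaner route: first prove the upper bound $\beta(a;0)\le (1+\alpha)^{-2/(p-1)}m_1 + \varepsilon$ for $L$ large. Take $v_\alpha \coloneqq (1+\alpha)^{-1/(p-1)}\omega_1$, the least energy solution of \eqref{eq_schr_alpha}; by \cref{prop_paths_f} (or the classical scaling argument $u_\theta=v_\alpha(\cdot/\theta)$ together with a dilation to make $K_\alpha<0$) there is a mountain pass path $\gamma$ for $K_\alpha$ with $\max_t K_\alpha(\gamma(t))=K_\alpha(v_\alpha)=(1+\alpha)^{-2/(p-1)}m_1$. Along this fixed path the family of functions $\gamma(t)$ ranges over a compact set in $E$, hence is uniformly bounded in $L^{p+1}$ and has uniformly small $L^{p+1}$-mass on the sublevel set $\{|u|<1/L\}$ as $L\to\infty$ (by dominated convergence, uniform over the compact path). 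On the region $\{|u|\ge L\}$, again by compactness of $\gamma([0,1])$ in $L^{p+1}$, the $L^{p+1}$-mass there tends to $0$ uniformly as $L\to\infty$. Combining, $\big|K(a;0,\gamma(t)) - K_\alpha(\gamma(t))\big|\le \frac{1}{p+1}\int_{\{|u|<1/L\}}|u|^{p+1} + \frac{1}{p+1}\int_{\{|u|\ge L\}}|u|^{p+1}$, which is $<\varepsilon$ for $L\ge L_\varepsilon$, uniformly in $a$. Since $\gamma$ is also an admissible path for $K(a;0,\cdot)$ once we check $K(a;0,\gamma(1))<0$ (true for $L$ large, as $K(a;0,\gamma(1))\le K_\alpha(\gamma(1)) + \varepsilon <0$), we get $\beta(a;0)\le\max_t K(a;0,\gamma(t))\le (1+\alpha)^{-2/(p-1)}m_1+\varepsilon$.

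For the lower bound $\beta(a;0)\ge(1+\alpha)^{-2/(p-1)}m_1-\varepsilon$, I would argue by contradiction: if not, there is a sequence $L_n\to\infty$, admissible $a_n$, and paths $\gamma_n$ with $\max_t K(a_n;0,\gamma_n(t))\le(1+\alpha)^{-2/(p-1)}m_1-\varepsilon$. Using the Gagliardo--Nirenberg-type estimate \eqref{eq_estimate_GN} and the fact that $1+a_n\le \frac32$, the functional $K(a_n;0,\cdot)$ is bounded below along admissible paths by a mountain-pass geometry that is uniform in $n$; more precisely, $K(a_n;0,u)\ge \half\norm{\nabla u}_2^2+\half\norm u_2^2 - \frac{3}{2(p+1)}\norm u_{p+1}^{p+1}$, whose mountain pass value is positive and bounded away from $0$, so the paths have a nontrivial topological obstruction. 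Then a concentration-compactness / rescaling analysis of an almost-critical sequence extracted along $\gamma_n$ shows it converges (after scaling) to a critical point of $K_\alpha$, whose least energy is exactly $(1+\alpha)^{-2/(p-1)}m_1$; combined with a Brezis--Lieb splitting and the vanishing of the $\{|u|<1/L_n\}$ and $\{|u|\ge L_n\}$ contributions, this forces $\max_t K(a_n;0,\gamma_n(t))\ge (1+\alpha)^{-2/(p-1)}m_1 - o(1)$, a contradiction. The main obstacle is making the lower bound genuinely uniform in $a$: one must ensure the mountain-pass min-max is not lowered by exploiting the regions where $a$ deviates from $\alpha$, which is precisely where \eqref{eq_propr_a_4} (the deviation occurs only at very small or very large values of $u$, which carry little $L^{p+1}$-mass for functions in a mountain-pass sublevel of bounded energy) is essential — the uniform bound $|u|^{p+1}\le C$ on compacta and uniform integrability at $0$ and $\infty$ must be leveraged carefully.
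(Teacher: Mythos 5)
Your upper bound is fine: running the optimal path of the limit functional $K_\alpha(u)=\half\norm{\nabla u}_2^2+\half\norm{u}_2^2-\frac{1+\alpha}{p+1}\norm{u}_{p+1}^{p+1}$ as a test path, and estimating the discrepancy by $\frac{1}{p+1}\int_{\{|u|<1/L\}\cup\{|u|\ge L\}}|u|^{p+1}\,dx$ (which is $\le L^{-(p-1)}\norm{u}_2^2$ near $0$ and is uniformly small on $\{|u|\ge L\}$ by compactness of the path in $E$ plus the radial decay), does give $\beta(a;0)\le(1+\alpha)^{-2/(p-1)}m_1+\varepsilon$ for $L$ large, uniformly in $a$.

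The genuine gap is the lower bound, and it is precisely the part you flag yourself without resolving. From a competitor path $\gamma_n$ with $\max_t K(a_n;0,\gamma_n(t))\le(1+\alpha)^{-2/(p-1)}m_1-\varepsilon$ you cannot "extract an almost-critical sequence along $\gamma_n$": Palais--Smale sequences live at the minimax level $\beta(a_n;0)$ and are produced by the mountain-pass/deformation machinery (or, as in the paper, by taking ground states $u_n$ with $K(a_n;0,u_n)=\beta(a_n;0)$, which exist by Berestycki--Lions/Jeanjean--Tanaka for these autonomous radial subcritical nonlinearities). Moreover your sketch never addresses boundedness of such sequences uniformly in $n$, which is the crux of the uniformity problem you mention: the paper's mechanism is \cref{Lem:AR}, i.e.\ every admissible $a$ yields the Ambrosetti--Rabinowitz inequality with the \emph{same} constant $\frac{p+3}{2}$, so the two-sided bound $\beta(\tfrac12;0)\le\beta(a_n;0)\le\beta(-\tfrac12;0)$ forces $(u_n)$ to be bounded in $E$; then Strauss compactness, the locally uniform convergence $g_n(s)/(|s|^{p-1}s)\to 1+\alpha$ away from $0$ (a consequence of \eqref{eq_propr_a_4} with $L_n\to\infty$), and the uniform lower bound $\beta(a_n;0)\ge\beta(\tfrac12;0)>0$ (no vanishing) give a nontrivial critical point $u_\infty$ of $K_\alpha$ with $K_\alpha(u_\infty)=\lim\beta(a_n;0)\le(1+\alpha)^{-2/(p-1)}m_1-\varepsilon$, contradicting that this value is the least energy of \eqref{eq_schr_alpha}. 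None of these steps (passage from paths to critical points, uniform AR boundedness, the non-vanishing argument, the least-energy comparison) is carried out in your proposal; in addition, the "after scaling/rescaling" you invoke is not needed here since $\lambda=0$, i.e.\ $\mu=1$ is fixed. Note also that the paper's compactness argument on ground states delivers both inequalities simultaneously, so once you set it up the separate upper-bound path construction becomes superfluous.
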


	\begin{proof}
We only give a sketch of a proof since this lemma is standard. 
Let $\alpha \in [-1/2, 1/2]$ be given and $(L_n)_{n\in \N}$, $(\alpha_n)_{n\in \N}$ and $(A_n)_{n\in \N}$ 
be arbitrary sequences satisfying $L_n \to \infty$, $\alpha_n \to \alpha$, and $A_n$ is an even function 
satisfying \eqref{eq_propr_a_1}--\eqref{eq_propr_a_4} with $L_n$ and $\alpha_n$. 
It suffices to show $\beta (A_n;0) \to (1+\alpha)^{-2/(p-1)} m_1$. 
By \eqref{eq_propr_a_1}, we see 
\[
	\beta \left( \frac{1}{2} ; 0 \right) \leq \beta(A_n;0) \leq \beta \left( - \frac{1}{2} ; 0 \right) \quad \text{for all $n \geq1$}.
\] 
In particular, $\beta(A_n;0)$ is bounded. Let $u_n \in E$ be a critical point of $K( A_n; 0,\cdot )$ with 
$K(A_n;0,u_n) = \beta(A_n;0)$. 
According to \cref{Lem:AR}, every $G_n = (1+A_n(s)) \abs{s}^{p+1}/(p+1)$ with $g_n = G_n'$ 
satisfies the Ambrosetti-Rabinowitz condition with the fixed constant $\theta $. Therefore, the boundedness of $K(A_n;0,u_n)$ 
implies that $(u_n)_{n\in \N}$ is bounded in $E$. Using Radial lemma and 
\[
\frac{g_n(s)}{|s|^{p-1}s} \to 1 + \alpha \quad \text{in $L^\infty_{loc}\big( [-k,k] \setminus \{0\} \big)$ for each $k \in \N$}, 
\]
by choosing suitable signs $\pm$ and extracting a subsequence, 
we may check that $\pm u_{n_k} \to (1+\alpha)^{-1/(p-1)} \omega_1$ strongly in $E$ 
since \eqref{eq_schr_alpha} has a unique positive solution in $E$. 
Thus, $\beta (A_{n_k}
;0) \to (1+\alpha)^{-2/(p-1)} m_1$. 
Since the limit does not depend on choices of subsequences, we have 
$\beta (A_n;0) \to (1+\alpha)^{-2/(p-1)} m_1$. 
\end{proof}

	Now we find $a$ and $\lambda_1,\lambda_2$ satisfying \eqref{eq_beta_12}. 
Choose $\alpha_1,\alpha_2$ and $\varepsilon$ so that 
	\[
		\begin{aligned}
			-\frac{1}{2} < \alpha_1 < 0 < \alpha_2 < \frac{1}{2}, \quad \varepsilon > 0, \quad 
			(1+\alpha_2)^{ - 2 /(p-1) } m_1 + \varepsilon < m_1 < (1+\alpha_1)^{ - 2/(p-1) } m_1 - \varepsilon. 
		\end{aligned}
	\]
Fix $L_\varepsilon > 1$ so that \cref{lem_Leps_alpha} holds for $\alpha = \alpha_1, \alpha_2$. 
By \cref{lem_exist_a}, we may find even functions $A_1,A_2 \in C^1(\R)$ satisfying 
\eqref{eq_propr_a_1}--\eqref{eq_propr_a_4} with $L=L_\varepsilon$ and $\alpha_i$. 
For sufficiently large $\ell > 0$, \eqref{eq_propr_a_2} gives 
	\begin{equation}\label{eq_supp_a1} 
		\supp A_1 (\cdot) \cap \supp A_2 \big( e^{ - N\ell/4 } \cdot \big) = \emptyset. 
	\end{equation}
For such an $\ell$, we set 
	\[
		A_\ell \coloneqq A_1 + A_2 \big( e^{ -N \ell /4 } \cdot \big). 
	\]
Remark that if $A$ satisfies \eqref{eq_propr_a_1}--\eqref{eq_propr_a_3}, then $A(\nu \cdot)$ also satisfies \eqref{eq_propr_a_1}--\eqref{eq_propr_a_3} for all $\nu > 0$. 
Therefore, thanks to \eqref{eq_supp_a1}, $A_\ell$ and $A_\ell( e^{N\ell/ 4} \cdot)$ satisfy \eqref{eq_propr_a_1}--\eqref{eq_propr_a_3} and 
	\[
		A_\ell (s) = \alpha_1 \quad \text{for each $s \in \left[ \frac{1}{L_\varepsilon} , L_\varepsilon \right]$}, \quad 
		A_\ell \big( e^{N\ell/4} s \big) = \alpha_2 \quad \text{for every $s \in \left[ \frac{1}{L_\varepsilon} , L_\varepsilon \right]$}. 
	\]
Recalling 
	\[
		\begin{aligned}
			K( A_\ell , 0 , u) &= \int_{\RN} \frac{1}{2} |\nabla u|^2 + \frac{1}{2} |u|^2 - \frac{1}{p+1} \big(1+A_\ell (u) \big) |u|^{p+1} \, dx,
			\\
			K(A_\ell , \ell , u) &= \int_{\RN} \frac{1}{2} |\nabla u|^2 
			+ \frac{1}{2} |u|^2 - \frac{1}{p+1} \left( 1+A_\ell \big( e^{ N\ell /4 } u \big) \right) |u|^{p+1} \, dx ,
		\end{aligned}
	\]
we infer from \cref{lem_Leps_alpha} and the choice of $\alpha_1 ,\alpha_2$ and $\varepsilon$ that 
	\[
		\beta (A_\ell ; 0 ) > \left( 1 + \alpha_1 \right)^{ - 2/(p-1) } - \varepsilon > m_1 > 
		\left(1 + \alpha_2 \right)^{ - 2/(p-1) } + \varepsilon 
		> \beta \left( A_\ell \big( e^{N\ell/4} \cdot \big), 0 \right) = \beta \left( A_\ell ; \ell \right).
	\]
Hence, \eqref{eq_beta_12} holds and we may find the desired example. 

\begin{proof}[Proof of \cref{thm_examples}]
The three cases of \cref{thm_examples} follow by \cref{prop_b_neg_post} (i) and (ii) (see also the subsequent comment), example \eqref{def:HG-a} and the subsequent construction, 
and the model case $g_0(t)=|t|^{1+\frac{4}{N}}t$, see \eqref{eq_values_b0}. The existence of two solutions finally follows by \cref{thm_old_ex_precise}. 
\end{proof}

\section{$L^2$-minimization problem}
\label{Section_L2min} 

In this section, we study the attainability of $d$ and relate $d$ to $\ub$, where 
$d$ and $\calI$ are defined in \eqref{eq_min_L2} and \eqref{eq_def_energyf}. 

\subsection{The values $\underline{b}$ and $d$: proof of Theorem \ref{thm_exists_min}} 
\label{subsec_theorem_exist_min}

Throughout this subsection, conditions \ref{(g0')} and \ref{(g1*)} are always supposed to hold. 

We note that $\gamma([0,1])\cap M_0\not=\emptyset$ for all $\gamma\in\uGamma$ and thus
	\begin{equation} \label{eq_b_d_geq} 
		\ub \geq \inf_{(\lambda,u)\in M_0} I(\lambda,u)=\inf_{u\in \calS_0}\calI(u)=d.
	\end{equation}
To prove \cref{thm_exists_min} (i), it suffices to show $\ub\leq d$. 
To achieve this, we introduce another minimizing value $d_+$ by
	\begin{equation*}
			d_+\coloneqq \inf_{u\in \calS_{0,+}} \calI(u),
	\end{equation*}
where
	\begin{equation*}
			\calS_{0,+} \coloneqq \begin{cases}
			\calS_0						&\text{when}\ N\geq 3,\\
			\big\{ u\in\calS_0 \mid Q(u)>0 \big\}	&\text{when}\ N=2,
		\end{cases}
	\end{equation*}
and
	\begin{equation*}
			Q(u)\coloneqq \intRN g(u)u-2G(u)\, dx.
	\end{equation*}
Clearly
	\begin{equation} \label{eq_d+_d} 
		d_+ \begin{cases}	\geq d &\text{for}\ N=2,\\ =d &\text{for}\ N\geq 3.
		\end{cases}
	\end{equation}
Since $g(s)s-2G(s)\sim (1-{\frac{2}{p+1}})\abs s^{p+1}$ as $s\sim 0$, 
$Q(\omega_\mu)>0$ holds for small $\mu>0$ (recall \eqref{eq_omegamu}). 
Remark also that $\lim_{\mu\to 0}\calI(\omega_\mu)=\lim_{\lambda\to -\infty}I(\lambda,\omega_\mu)=0$ by \eqref{eq_lim_lam_omegamu}.
In particular, we have $\calS_{0,+}\not=\emptyset$ and $d_+\leq 0$.

First we show the following result.
\begin{Lemma} \label{lem_ub_d+_leq} 
$\ub\leq d_+$.
\end{Lemma}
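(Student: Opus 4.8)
The plan is to bypass the path class $\uGamma$ entirely and argue through the unconstrained mountain pass value: recall from \eqref{eq_b_blambda} that $\ub\le b(\lambda)$ for every $\lambda\in\R$, and from \eqref{eq_amu_blambda} that $b(\lambda)=a(e^{\lambda})-e^{\lambda}m_1$ with $a(\mu)$ the mountain pass value of $\Psi_\mu$ over the class $\Lambda_\mu$ of \eqref{eq_def_amu}. Thus it suffices to prove that for each $u\in\calS_{0,+}$ with $\calI(u)<0$ there is a frequency $\mu>0$ with $a(\mu)-\mu m_1\le\calI(u)$. By \eqref{eq_estimate_GN} one has $\calI\ge -2\bar A m_1$ on $\calS_0$, so $-\infty<d_+\le 0$; if $d_+=0$ then $\ub\le 0=d_+$ is already contained in \eqref{eq_general_est_b}, hence I may assume $d_+<0$. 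Fix $\varepsilon\in(0,-d_+)$ and choose $u\in\calS_{0,+}$ with $\calI(u)<d_++\varepsilon<0$; in particular $\intRN G(u)\,dx>\half\norm{\nabla u}_2^2>0$.

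The key choice is the frequency making the rescalings $u(\cdot/\theta)$ an optimal mountain pass path for $\Psi_\mu$, i.e. making $u$ satisfy the Pohozaev identity at its own scale. Concretely I set
\[
\mu_u\coloneqq\frac{1}{m_1}\left(\intRN G(u)\,dx-\frac{N-2}{2N}\norm{\nabla u}_2^2\right)
\]
(which for $N=2$ reads $\mu_u=\frac{1}{m_1}\intRN G(u)\,dx$), positive by the previous line. With $F(s)\coloneqq G(s)-\frac{\mu_u}{2}s^2$ and $L\coloneqq\Psi_{\mu_u}$, using $\half\norm u_2^2=m_1$ one checks directly that $u$ satisfies the Pohozaev identity \eqref{eq_paths_g_1}; moreover for $N=2$ one has $\intRN f(u)u\,dx=\intRN g(u)u\,dx-2\mu_u m_1=\intRN\big(g(u)u-2G(u)\big)\,dx=Q(u)>0$ precisely because $u\in\calS_{0,+}$, so \eqref{eq_paths_g_2} holds too. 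The function $f\coloneqq g-\mu_u\,\mathrm{id}$ satisfies \ref{(f0)}--\ref{(f3)}: \ref{(f0)} is clear; \ref{(f1)} holds since $g(s)\sim\abs s^{p-1}s$ with $p<2^*-1$; \ref{(f2)} follows from $f(s)/s\to-\mu_u<0$; and \ref{(f3)} from $F(s)\sim\frac{1}{p+1}\abs s^{p+1}\to+\infty$.

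Applying \cref{prop_paths_f} with $u_0=u$ then yields a path $\gamma_0\in C([0,1],E)$ with $\gamma_0(0)=0$, $\Psi_{\mu_u}(\gamma_0(1))\ll -1$ (hence $\gamma_0\in\Lambda_{\mu_u}$), passing through $u$, and with $\max_{t\in[0,1]}\Psi_{\mu_u}(\gamma_0(t))=\Psi_{\mu_u}(u)$. Writing $\lambda_u\coloneqq\log\mu_u$ and using $\Psi_{\mu_u}(u)=\calI(u)+\frac{\mu_u}{2}\norm u_2^2$, this gives
\[
\ub\le b(\lambda_u)=a(\mu_u)-\mu_u m_1\le\Psi_{\mu_u}(u)-\mu_u m_1=\calI(u)<d_++\varepsilon.
\]
Letting $\varepsilon\to 0^+$ yields $\ub\le d_+$.

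The only genuinely delicate points are the strict positivity of $\mu_u$ — which is exactly why one first reduces to $d_+<0$, so that near-minimizers satisfy $\calI(u)<0$ — and, in dimension $N=2$, the identification of the auxiliary hypothesis \eqref{eq_paths_g_2} of \cref{prop_paths_f} with the membership $u\in\calS_{0,+}$; this is the reason $\calS_{0,+}$ is defined through $Q(u)>0$ when $N=2$. Everything else (the Pohozaev computation, the verification of \ref{(f0)}--\ref{(f3)}, the estimate $a(\mu_u)\le\max_t\Psi_{\mu_u}(\gamma_0(t))$) is routine, and no control of endpoints relative to $\zeta_0$ is needed since the argument runs entirely through the value $b(\lambda)$.
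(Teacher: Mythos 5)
Your proposal is correct and is essentially the paper's own argument: after the same reduction to $d_+<0$, you pick the same frequency $\mu_u$ (the one making the near-minimizer satisfy the Pohozaev identity for the shifted nonlinearity, with $Q(u)>0$ supplying \eqref{eq_paths_g_2} when $N=2$), apply \cref{prop_paths_f} to get an admissible path in $\Lambda_{\mu_u}$ through $u$, and conclude via $\ub\le b(\lambda_u)=a(\mu_u)-\mu_u m_1\le \Psi_{\mu_u}(u)-\mu_u m_1=\calI(u)$. The only cosmetic differences are your explicit $\varepsilon$-approximation of $d_+$ (the paper instead takes an arbitrary $u_0\in\calS_{0,+}$ with $\calI(u_0)<0$) and your spelled-out verification of \ref{(f0)}--\ref{(f3)}, which the paper leaves implicit.
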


\begin{proof}
Since $\ub\leq 0$ due to \eqref{eq_general_est_b}, $\ub\leq d_+$ holds if $d_+=0$.
Therefore, we assume $d_+ < 0$ hereafter. 

To prove $\ub \leq d_+$, choose $u_0 \in \calS_{0,+}$ with $\calI(u_0) < 0$. 
In order to apply \cref{prop_paths_f}, we first notice that 
	\begin{equation*}
			{\frac{N-2}{2}}\norm{\nabla u_0}_2^2 -N\intRN G(u_0)\, dx
		= N\calI(u_0) -\norm{ \nabla u_0}_2^2 <0.
	\end{equation*}
Thus, there exists $\mu_0=e^{\lambda_0}>0$ such that
	\begin{equation} \label{eq_dim_ub_d+_pohoz} 
		{\frac{N-2}{2}}\norm{\nabla u_0}_2^2 + N\left( {\frac{\mu_0}{2}}\norm{u_0}_2^2
		-\intRN G(u_0)\, dx\right) =0.
	\end{equation}
By setting $F(s)\coloneqq -{\frac{\mu_0}{2}}s^2+G(s)$, $f\coloneqq F'$ and defining $L$ by 
 \begin{equation*}
 L(u) \coloneqq \half\norm{\nabla u}_2^2 -\intRN F(u)\,dx:\, E\to \R ,
 \end{equation*}
clearly \eqref{eq_paths_g_1} holds.
Next we verify \eqref{eq_paths_g_2} when $N=2$.
In this case it follows from \eqref{eq_dim_ub_d+_pohoz} that $\mu_0\norm{u_0}_2^2=2\intRN G(u_0)\,dx$.
By $u_0\in\calS_{0,+}$, we have $Q(u_0)>0$, which implies 
	\begin{equation*}
			\begin{aligned} 
		\MoveEqLeft \intRN f(u_0)u_0\, dx =-\mu_0\norm{u_0}_2^2 +\intRN g(u_0)u_0\,dx 
		= \intRN g(u_0)u_0-2G(u_0)\,dx = Q(u_0) >0.	
		\end{aligned}
	\end{equation*}
Thus \eqref{eq_paths_g_2} holds. 
We apply \cref{prop_paths_f} to obtain a path 
$\gamma_0 : [0,1]\to E$ with properties 
	\begin{equation*}
		\max_{t\in [0,1]} L(\gamma_0(t)) \leq L(u_0)
		= \calI(u_0) 
		 + \mu_0 m_1,
		\quad 
		\gamma_0(0)=0, \quad L(\gamma_0(1))<0, \quad u_0 \in \gamma_0([0,1]).
	\end{equation*}
It is easily seen that $\gamma_0 \in \Lambda_{\mu_0}$ and $L=\Psi_{\mu_0}$, 
and \eqref{eq_amu_blambda} and \eqref{eq_b_blambda} yield 
$\ub\leq b(\lambda_0)\leq \calI(u_0)$. 
Since $u_0\in\calS_{0,+}$ with $\calI(u_0)<0$ is arbitrary, \cref{lem_ub_d+_leq} is proved. 
\end{proof}

When $N\geq 3$, we have $d=\ub$ by \eqref{eq_b_d_geq}, \eqref{eq_d+_d} and \cref{lem_ub_d+_leq}. 
We consider the case $N=2$ and assume $d<0$. 
Otherwise, since $d_+\leq 0$, the equalities $d=d_+$ and $d=\ub$ hold.
We study the behavior of $\calI$ on the set
	\begin{equation*}
			\calS_{0,-} \coloneqq \calS_0\setminus\calS_{0,+}
		=\big\{ u\in\calS_0 \mid Q(u)\leq 0\big\}.
	\end{equation*}
	Let us suppose $\calS_{0,-}\not=\emptyset$, and 
define
	\begin{equation*}
			d_- \coloneqq \inf_{u\in\calS_{0,-}}\calI(u).
	\end{equation*}
Clearly $d=\min\{d_+,d_-\}$ holds.
For $u_0\in\calS_{0,-}$ we consider the following curve on $\calS_0$:
	\begin{equation*}
			u_{0t}\coloneqq t^{1/2}u_0(t^{1/2}\cdot).
	\end{equation*}
It is easily seen that $\norm{u_{0t}}_2^2=\norm{u_0}_2^2$ for all $t>0$.
In particular, $u_{0t}\in\calS_0$ for all $t>0$. 

\begin{Lemma} \label{lem_d+_d} 
Assume $d<0$, $N=2$ and $\calS_{0,-}\not=\emptyset$. 
Then for all $u_0\in\calS_{0,-}$, there exists $t_0\in (0,1)$ such that
	\begin{equation} \label{eq_IQ_u0} 
		\calI(u_{0t_0}) \leq \calI(u_0), \quad Q(u_{0t_0})>0.
	\end{equation}
As a consequence, $d_+=d$.
\end{Lemma}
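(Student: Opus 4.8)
The plan is to reduce the statement to the study of the scalar function $\phi(t) \coloneqq \calI(u_{0t})$ on $(0,\infty)$. Recalling $N=2$, the scaling relations $\norm{u_{0t}}_2^2 = \norm{u_0}_2^2$, $\norm{\nabla u_{0t}}_2^2 = t\norm{\nabla u_0}_2^2$ and $\int_{\RN} G(u_{0t})\,dx = t^{-1}\int_{\RN} G\big(t^{1/2}u_0(x)\big)\,dx$ give $\phi(t) = \frac{t}{2}\norm{\nabla u_0}_2^2 - t^{-1}\int_{\RN} G\big(t^{1/2}u_0(x)\big)\,dx$. Using the bound $\abs{g(s)} \leq \abs{s}^p + 2\bar{A}\abs{s}$ coming from \ref{(g1*)}, one checks by dominated convergence that $\phi \in C^1((0,\infty))$ and, differentiating under the integral sign and rearranging, that
\begin{equation*}
	2t\,\phi'(t) = t\norm{\nabla u_0}_2^2 - Q(u_{0t}), \qquad t > 0;
\end{equation*}
equivalently, $Q(u_{0t}) > 0$ if and only if $\phi'(t) < \frac{1}{2}\norm{\nabla u_0}_2^2$. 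Since $u_0 \in \calS_{0,-}$ gives $Q(u_0) \leq 0$ and $u_0 \not\equiv 0$ (as $\norm{u_0}_2^2 = 2m_1 > 0$, whence also $\norm{\nabla u_0}_2 > 0$), we get $\phi'(1) = \frac{1}{2}\big(\norm{\nabla u_0}_2^2 - Q(u_0)\big) \geq \frac{1}{2}\norm{\nabla u_0}_2^2 > 0$, so $\phi$ is strictly increasing at $t = 1$.

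Next I would establish that $Q(u_{0t}) > 0$ for all small $t > 0$. Writing $g(s)s - 2G(s) = \frac{p-1}{p+1}\abs{s}^{p+1} + \Theta(s)$ with $\Theta(s) \coloneqq h(s)s - 2H(s)$, the same scaling gives $Q(u_{0t}) = \frac{p-1}{p+1}\,t\,\norm{u_0}_{p+1}^{p+1} + t^{-1}\int_{\RN}\Theta\big(t^{1/2}u_0(x)\big)\,dx$, so it suffices to prove $t^{-2}\int_{\RN}\Theta\big(t^{1/2}u_0(x)\big)\,dx \to 0$ as $t \to 0^+$. Fixing $\varepsilon > 0$, I pick $\delta > 0$ with $\abs{\Theta(s)} \leq \varepsilon\abs{s}^{p+1}$ for $\abs{s} \leq \delta$ (possible since $\Theta(s) = o(\abs{s}^{p+1})$ at the origin, by \ref{(g1*)}) and split $\RN$ into $\{\abs{u_0} \leq \delta t^{-1/2}\}$ and $E_t \coloneqq \{\abs{u_0} > \delta t^{-1/2}\}$: on the former the contribution is $\leq \varepsilon t^2\norm{u_0}_{p+1}^{p+1}$, while on $E_t$ one uses $\abs{\Theta(s)} \leq 4\bar{A}s^2$ together with Chebyshev's inequality $\abs{E_t} \leq \delta^{-4}t^2\norm{u_0}_{p+1}^{p+1}$, Hölder, and $\int_{E_t}\abs{u_0}^{p+1} \to 0$, to bound the contribution by $o(t^2)$. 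Letting $\varepsilon \to 0$ yields the claim, hence $Q(u_{0t}) = t\big(\frac{p-1}{p+1}\norm{u_0}_{p+1}^{p+1} + o(1)\big) > 0$ for $t$ in some interval $(0,t_1)$, with $\norm{u_0}_{p+1} > 0$ since $u_0 \not\equiv 0$. I then set $t_* \coloneqq \sup\big\{ s \in (0,1] \mid Q(u_{0t}) > 0 \ \text{for all}\ t \in (0,s) \big\} \in (0,1]$, so that $Q(u_{0t}) > 0$ on $(0,t_*)$.

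Finally I would conclude by a dichotomy on $\phi$. If there is $t_0 \in (0,t_*)$ with $\phi(t_0) \leq \phi(1)$, then $t_0$ works, since $\calI(u_{0t_0}) = \phi(t_0) \leq \phi(1) = \calI(u_0)$ and $Q(u_{0t_0}) > 0$. Otherwise $\phi(t) > \phi(1)$ for every $t \in (0,t_*)$; then $t_* < 1$, for if $t_* = 1$ the inequality $\phi'(1) > 0$ would force $\phi(1-\eta) < \phi(1)$ for small $\eta > 0$, contradicting the case assumption. Hence $(t_*,1)$ is a nonempty interval with $\phi(t_*) \geq \phi(1)$ (by continuity), while $\phi'(1) > 0$ provides some $t' \in (t_*,1)$ with $\phi(t') < \phi(1)$; thus $\min_{[t_*,1]}\phi \leq \phi(t') < \phi(1) \leq \phi(t_*)$ is attained at an interior point $t_0 \in (t_*,1)$. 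There $\phi'(t_0) = 0 < \frac{1}{2}\norm{\nabla u_0}_2^2$, so $Q(u_{0t_0}) = t_0\norm{\nabla u_0}_2^2 > 0$, while $\calI(u_{0t_0}) = \phi(t_0) < \calI(u_0)$. This proves \eqref{eq_IQ_u0}. The consequence $d_+ = d$ is then immediate: for every $u_0 \in \calS_{0,-}$ the point $u_{0t_0}$ lies in $\calS_{0,+}$ and satisfies $\calI(u_{0t_0}) \leq \calI(u_0)$, so $d_+ \leq \calI(u_0)$; taking the infimum over $\calS_{0,-}$ gives $d_+ \leq d_-$, and since $d = \min\{d_+,d_-\}$ we conclude $d = d_+$.

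The main obstacle is the limit $t^{-2}\int_{\RN}\Theta\big(t^{1/2}u_0(x)\big)\,dx \to 0$: a naive dominated-convergence argument is not available, since the natural pointwise dominant $4\bar{A}t^{-1}u_0^2$ fails to be integrable uniformly in $t$, and one must instead exploit that $u_0 \in L^{p+1}(\RN)$ (in dimension two) through the level-set decomposition above. Everything else is soft — the $C^1$ identity linking $\phi'$ with $Q$, combined with $\phi'(1) > 0$ and the positivity of $Q(u_{0t})$ near $t = 0$, pins down the desired $t_0$ either directly or as an interior minimizer of $\phi$ over $[t_*,1]$.
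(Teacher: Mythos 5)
Your proof is correct and follows essentially the same route as the paper: the same rescaled curve $u_{0t}$, the same identity linking $\frac{d}{dt}\calI(u_{0t})$ with $\norm{\nabla u_{0t}}_2^2 - Q(u_{0t})$, and the same key fact that $Q(u_{0t})>0$ for small $t$ (where $N=2$ and $g(s)s-2G(s)\sim\tfrac12 s^4$ enter). The only differences are cosmetic: the paper gets the small-$t$ limit by dominated convergence applied to the globally bounded ratio $\big(g(s)s-2G(s)\big)/s^4$ evaluated at $s=t^{1/2}u_0$ (so your level-set/Chebyshev splitting, while valid, is not needed), and it concludes at $T_0=\inf\{\ell\in(0,1]\mid Q(u_{0t})\le 0 \ \text{on}\ [\ell,1]\}$ using monotonicity of $\calI(u_{0t})$ on $[T_0,1]$ rather than your interior-minimum dichotomy on $[t_*,1]$.
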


\begin{proof}
For $u_0\in\calS_{0,-}$ and $t\in (0,1]$, we compute 
	\begin{equation} \label{eq_dt_Iu0} 
	{\frac{d}{dt}} \calI(u_{0t}) = \half\norm{\nabla u_0}_2^2 
		-\half t^{-2} \int_{\R^2} 
g(t^{1/2}u_0)t^{1/2}u_0-2G(t^{1/2}u_0)\, dx
		=\half t^{-1}\big( \norm{\nabla u_{0t}}_2^2 -Q(u_{0t})\big).	
	\end{equation}
Set
	\begin{equation*}
		T_0\coloneqq \inf \big\{ \ell\in (0,1] \mid Q(u_{0t})\leq 0 \ \text{for all}\ t\in [\ell,1] \big\}
			\in [0,1]. 	
	\end{equation*}
Since $u_0\in \calS_{0,-}$, we have $Q(u_0)\leq 0$ and $T_0\leq 1$.
To show the existence of the desired $t_0\in (0,1)$, it suffices to show
\begin{itemize}
\item[(a)] $T_0\in (0,1]$;
\item[(b)] $\calI(u_{0T_0}) \leq \calI(u_0)$;
\item[(c)] ${\frac{d}{dt}}\Big|_{t=T_0}\calI(u_{0t})\geq {\frac{1}{2T_0}}\norm{\nabla u_{0T_0}}_2^2>0$.
\end{itemize}

\noindent
In fact, by the definition of $T_0$ and (a), there exists $(t_j)_{j\in\N}\subset(0,T_0)$
such that $t_j\to T_0$ and $Q(t_j)>0$. Properties (b) and (c) imply $\calI(u_{0t_j})<\calI(u_0)$
for large $j$ and by setting $t_0=t_j$, property \eqref{eq_IQ_u0} holds.

To see (a), first we observe that $Q(u_{0t})>0$ for $t$ close to $0$. In fact, 
since $(g(s) s - 2G(s)) / s^4$ is bounded in $\R \setminus \{0\}$ and 
$(g(s) s - 2G(s)) /s^4 \to 1/2$ as $s \to 0$ thanks to \ref{(g1*)}, 
the dominated convergence theorem leads to 
	\begin{equation*}
			\begin{aligned}
	Q(u_{0t}) = t^{-1}\int_{\R^2} g(t^{1/2}u_0)t^{1/2}u_0-2G(t^{1/2}u_0)\, dx 
	&= t \int_{\R^2} {\frac{g(t^{1/2}u_0) t^{1/2}u_0 - 2G(t^{1/2}u_0) }{(t^{1/2}u_0)^4}}
			u_0^4\, dx \\
	&
	= t \left(\half\norm{u_0}_4^4+o(1)\right) \quad \text{as}\ t\sim 0. 
		\end{aligned}
	\end{equation*}
Thus for sufficiently small $t$, we have $Q(u_{0t})>0$ and (a) is verified.
Since ${\frac{d}{dt}}\calI(u_{0t}) > 0$ for $t\in [T_0,1]$ by \eqref{eq_dt_Iu0}, we have (b). 
Point (c) also follows from \eqref{eq_dt_Iu0}.

Thus for any $u_0\in \calS_{0,-}$ there exists $u_{0t}\in\calS_{0,+}$ with
$\calI(u_{0t})\leq \calI(u_0)$. Therefore we have $d_+\leq d_-$, which implies
$d_+=d$.
\end{proof}

\begin{proof}[Proof of \cref{thm_exists_min} (i)]
It follows from \cref{lem_ub_d+_leq} and \cref{lem_d+_d}.
\end{proof}

Finally, \cref{thm_exists_min} (ii) follows from \cref{thm_exists_min} (i) and \cref{thm_old_ex_precise}: 

\begin{proof}[Proof of \cref{thm_exists_min} (ii)]
By \cref{thm_exists_min} (i), we have $\ub=d < 0$. 
Hence, \cref{thm_old_ex_precise} yields the existence of a solution at level $\ub =d$. 
\end{proof}

\begin{proof}[Proof of \cref{cor_exists_min}]
\cref{prop_b_neg_post} (i) and \cref{thm_exists_min} (i) yield 
$\ob = d < 0$ and \cref{cor_exists_min} follows from \cref{thm_exists_min} (ii). 
\end{proof}

\subsection{Properties of minimizers} 
This subsection is devoted to studying properties of $\calM_d$ where 
	\begin{equation*}
			\calM_d \coloneqq \big\{u \in \calS_0 \mid \calI(u) = d \big\}.
	\end{equation*}
First, we show the following relation.

\begin{Lemma} 
\label{lem_compact_projec} 
Assume \ref{(g0)}, \ref{(g1*)} and $d<0$. Then
	\begin{equation*}
			\calM_d = P_2(K_d),
	\end{equation*}
where $K_d$ is given in \eqref{eq_def_criticalset} 
and $P_2:\,\RE\to E$ is the standard projection.
\end{Lemma}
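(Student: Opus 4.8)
The plan is to prove the two inclusions separately, using the Lagrangian structure of $I$ and the compactness already available from \cref{prop_compact_Kb}. For the inclusion $P_2(K_d) \subset \calM_d$: if $(\lambda_0, u_0) \in K_d$ then $u_0$ is a critical point of $I(\lambda_0, \cdot) = \Psi_{\mu_0}$ and the derivative in $\lambda$ vanishes, which by direct computation of $\partial_\lambda I(\lambda, u) = \mu(\frac{1}{2}\|u\|_2^2 - m_1)$ forces $\frac{1}{2}\|u_0\|_2^2 = m_1$, i.e. $u_0 \in \calS_0$. Moreover $\calI(u_0) = \Psi_{\mu_0}(u_0) - \frac{\mu_0}{2}\|u_0\|_2^2 = I(\lambda_0, u_0) = d$ (using $\frac{1}{2}\|u_0\|_2^2 = m_1$), so $u_0 \in \calM_d$.

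For the reverse inclusion $\calM_d \subset P_2(K_d)$: given $u_0 \in \calM_d$, so $u_0 \in \calS_0$ minimizes $\calI$ over $\calS_0$ with value $d < 0$, I would invoke the Lagrange multiplier rule on the constrained problem. Since $\calS_0$ is a smooth manifold (the constraint $u \mapsto \frac{1}{2}\|u\|_2^2$ has nonvanishing derivative away from $0$, and $u_0 \neq 0$ as $m_1 > 0$), there is $\nu \in \R$ with $\calI'(u_0) = \nu u_0$ in $E^*$; that is, $-\Delta u_0 - g(u_0) = \nu u_0$ weakly among radial functions, hence (by the principle of symmetric criticality) in all of $H^1(\R^N)$. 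The key point is to check $\nu < 0$, so that setting $\mu_0 = -\nu = e^{\lambda_0} > 0$ is legitimate: this should follow from a Pohozaev-type / Nehari-type argument, testing the equation against $u_0$ and combining with the Pohozaev identity to pin down the sign of $\nu$ from $d < 0$ (the same mechanism that produced $\mu_0 > 0$ in the proof of \cref{lem_ub_d+_leq} via \eqref{eq_dim_ub_d+_pohoz}). Once $\mu_0 > 0$ is established, $(\lambda_0, u_0)$ satisfies $D_{\lambda,u} I(\lambda_0, u_0) = 0$ (the $u$-equation is the Lagrange multiplier equation, the $\lambda$-equation is the constraint $u_0 \in \calS_0$), and $I(\lambda_0, u_0) = \calI(u_0) = d$, so $(\lambda_0, u_0) \in K_d$ and $u_0 \in P_2(K_d)$.

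I expect the main obstacle to be the sign of the Lagrange multiplier $\nu$ — i.e., showing genuinely that $\nu < 0$ rather than $\nu \leq 0$ or $\nu = 0$. The condition $d < 0$ is exactly what should rule out $\nu \geq 0$: if $\nu \geq 0$ one could use a scaling of $u_0$ (dilation $u_0(\cdot/\theta)$, as exploited throughout \cref{Section_prelim}) or the Pohozaev identity to contradict $\calI(u_0) < 0$, since for $\nu \geq 0$ the functional $\Psi_{-\nu}$ would have nonnegative mountain-pass level forcing $\calI(u_0) \geq 0$. Once the sign is settled, the remaining verifications are routine bookkeeping with the identities $\Psi_\mu(u) = \calI(u) + \frac{\mu}{2}\|u\|_2^2$ and $I(\lambda,u) = \Psi_\mu(u) - \mu m_1$ already recorded in the preliminaries.
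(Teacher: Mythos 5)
Your proposal is correct and follows essentially the same route as the paper: for $\calM_d\subset P_2(K_d)$ the paper also introduces the Lagrange multiplier $\kappa=-\nu$ and then applies the Pohozaev identity, written there as $0=N\calI(u)-\|\nabla u\|_2^2+\tfrac{N\kappa}{2}\|u\|_2^2$, which gives $\kappa>0$ immediately from $d<0$ --- precisely the mechanism you sketched, with no dilation or mountain-pass comparison needed. The easy inclusion $P_2(K_d)\subset\calM_d$ is handled as you do, via $\partial_\lambda I=0$ forcing $u\in\calS_0$ and $I(\lambda,\cdot)=\calI$ on $\calS_0$.
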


	\begin{proof}
The inclusion $P_2(K_d)\subset \calM_d$ is easily proved. On the other hand, let $u\in\calM_d$. Then there exists $\kappa \in \R$ such that 
	\begin{equation*}
			-\Delta u + \kappa u = g(u) \quad \text{in} \ \R^N. 
	\end{equation*}
Remark that $u$ satisfies the Pohozaev identity: 
	\begin{equation*}
			0 = \frac{N-2}{2} \| \nabla u \|_2^2 - N \left( \int_{\R^N} G(u) \, dx 
 - \frac{\kappa}{2} \| u \|_2^2 \right) 
		= N \calI(u) - \| \nabla u \|_2^2 + \frac{N\kappa }{2} \| u \|_2^2.
	\end{equation*}
Since $\calI(u) = d < 0$, we have $\kappa > 0$ and 
	$(\log \kappa, u) \in K_d$.
Thus $\calM_d\subset P_2(K_d)$. 
\end{proof}

Under \ref{(g0')} and \ref{(g1*)} with $\alpha = 0$, for $d<0$, $K_d$ is compact in $\RE$ thanks to \cref{prop_compact_Kb}. 
By virtue of \cref{lem_compact_projec} and \cref{thm_exists_min} (ii), we have the following statement.

\begin{Corollary} 
\label{cor_compact_Md}
Assume \ref{(g0')}, \ref{(g1*)} with $\alpha=0$ 
and $d < 0$. 
Then the set $\calM_d$ is nonempty and compact in $E$. 
\end{Corollary}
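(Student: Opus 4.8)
The plan is to assemble three results already proved in the excerpt, so the proof is essentially a matter of checking that the hypotheses line up.

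First, nonemptiness. Under \ref{(g0')}, \ref{(g1*)} with $\alpha=0$ and $d<0$, \cref{thm_exists_min} (ii) produces a positive radially symmetric minimizer of \eqref{eq_min_L2}; in particular $\calM_d\neq\emptyset$.

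Second, compactness. Since \ref{(g0')} is stronger than \ref{(g0)}, the hypotheses of \cref{lem_compact_projec} are satisfied and hence $\calM_d=P_2(K_d)$. By \cref{prop_compact_Kb} (i), the set $K_d$ is compact in $\RE$ because $d<0$. As $P_2\colon\RE\to E$ is continuous (indeed a linear projection), the image $P_2(K_d)=\calM_d$ is compact in $E$.

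I do not expect any real obstacle here: the statement is a direct corollary, and the only point to keep an eye on is that the various assumption sets match up (\ref{(g0')} $\Rightarrow$ \ref{(g0)}, $\alpha=0$ as required by \cref{prop_compact_Kb} (i), and $d<0$ throughout) together with the elementary fact that a continuous image of a compact set is compact.

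\begin{proof}
Since \ref{(g0')} implies \ref{(g0)} and $d<0$, \cref{thm_exists_min} (ii) guarantees that $d$ is attained, so $\calM_d\neq\emptyset$. Moreover, \cref{lem_compact_projec} applies and gives $\calM_d=P_2(K_d)$, while \cref{prop_compact_Kb} (i) ensures that $K_d$ is compact in $\RE$ (here we use $\alpha=0$ and $d<0$). As $P_2\colon\RE\to E$ is continuous, $\calM_d=P_2(K_d)$ is compact in $E$.
\end{proof}
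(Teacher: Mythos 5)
Your proof is correct and follows essentially the same route as the paper: the identity $\calM_d=P_2(K_d)$ from \cref{lem_compact_projec} combined with the compactness of $K_d$ from \cref{prop_compact_Kb} (i), with nonemptiness supplied by \cref{thm_exists_min} (ii). The hypothesis bookkeeping you carry out (\ref{(g0')} $\Rightarrow$ \ref{(g0)}, $\alpha=0$, $d<0$) matches what the paper implicitly uses.
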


\begin{Remark}\label{Rem:Md}
The outcome of \cref{cor_compact_Md} does not generally hold when $d=0$. Indeed, we can find examples such that one of the following facts occurs: 
\begin{itemize}
\item[{\rm (1)}] $\calM_{0}
$ is non-compact;
\item[{\rm (2)}] $\calM_{0}
=\emptyset$.
\end{itemize}
In fact, recalling $G_0(s) = \frac{1}{p+1} |s|^{p+1}$, 
we have the following facts.
\begin{itemize}
\item[{\rm (1)}] 
Consider the functional $\calI_0$. 
Then by \eqref{eq_values_b0} and \cref{thm_exists_min},
$d = \ub = 0$ and the corresponding set of minimizers is
	$	\calM_{0}
 = \{\pm \omega_\mu \mid \mu>0\}
	$.
In particular, $\calM_{0}
$ is non-compact in $E$. 
\item[{\rm (2)}] Assume that $g$ satisfies \ref{(g0')}, \ref{(g1*)} with $\alpha=0$, 
\ref{(g3)} and
	\begin{equation*}
			G(s)<G_0(s) \quad \text{for all}\ s\not=0.
	\end{equation*}
Then, \cref{prop_b_neg_post} (ii) and \cref{thm_exists_min} (i) give $d=\ub=0$. Moreover for all $u\in\calS_0$
	\begin{equation*}
			\begin{aligned} 
		\calI(u) = \half\norm{\nabla u}_2^2 -\intRN G(u)\, dx 
		> \half\norm{\nabla u}_2^2 -\intRN G_0(u)\, dx 
			\geq 0. 		
		\end{aligned}
	\end{equation*}
Therefore $\calM_{0}
=\emptyset$ and $d=0$ is not a critical value of $I$. 
\end{itemize}
\end{Remark}

\subsection{Nonexistence result: proof of Theorem \ref{thm_nonexis_min}} 
\label{Section_nonexist_min} 

This subsection is devoted to the proof of \cref{thm_nonexis_min} and 
as stated at the beginning of \cref{Section_prelim}, 
we assume \ref{(g0)} instead of \ref{(g0')}. 
Under \ref{(g0)} and \ref{(rho+1)}, 
$H(s)=\half\rho(s)s^2$ satisfies 
\begin{equation}\label{H-ineq}
	\left\{\begin{aligned}
		&
		\text{$H(s) \leq \half\alpha s^2$ for all $s\in\R \setminus \{0\}$ and there exist $(s_n)_{n \in \N}$ and $(t_n)_{n \in \N}$ such that} 
		\\
		&
		\text{$s_n < 0 < t_n$, $s_n,t_n \to 0$, $H(s_n) < \frac{1}{2} \alpha s_n^2$ and 
		$H(t_n) < \frac{1}{2} \alpha t_n^2$}.
	\end{aligned}\right.
\end{equation}
As we will see, under these conditions we have $d=\inf_{u\in\calS_0}\calI(u)<0$, 
and thus it is natural to expect that $d$ is a critical value of $I$.
However we have the non-existence result stated in \cref{thm_nonexis_min}, that is,
$d$ is not attained in $\calS_0$.

\begin{proof} [Proof of \cref{thm_nonexis_min}]
First we show $d=-\alpha m_1<0$. 
To get $d \leq - \alpha m_1$, we use $\omega_\mu 
= \mu^{N/4} \omega_1 (\sqrt{\mu} \cdot)$ 
in \eqref{eq_omegamu}. The fact $\calI_0 (\omega_\mu) = 0$, \ref{(rho+1)} and the dominated convergence theorem imply 
that as $\mu \to +\infty$, 
\[
\begin{aligned}
	d \leq \calI(\omega_\mu) 
	= \calI_0(\omega_\mu) - \int_{\RN} H(\omega_\mu) \, dx 
	= - \frac{1}{2} \int_{\RN} \rho \big( \mu^{N/4} \omega_1 \big) \omega_1^2 \, dx
	\to - \alpha m_1. 
\end{aligned}
\]
Hence, $d \leq - \alpha m_1$ holds. 

	On the other hand, 
from \ref{(rho+1)} it follows that 
\begin{equation*}
	\calI_0(u) -{\frac{\alpha}{2}}\norm u_2^2 \leq \calI_0 (u) - \int_{\RN} H(u) \, dx = \calI(u) 
	\qquad \text{for all}\ u\in\calS_0;
\end{equation*}
being $\calI_0(u)\geq 0$ by \eqref{eq_estimate_GN} we obtain $d\geq -\alpha m_1$ and thus $d = - \alpha m_1$. 

Next we show that $d$ is not attained. Arguing indirectly, we suppose that
$u\in\calS_0$ attains $d$. Then we have
	\begin{equation*}
			\calI_0(u) -\intRN H(u)\, dx = d = -\alpha m_1.
	\end{equation*}
Since $\calI_0(u)\geq 0$, the fact $\| u \|_2^2 = 2m_1$ yields 
	\begin{equation*}
			\intRN H(u)-{\frac{\alpha}{2}}u^2 \, dx \geq 0.
	\end{equation*}
However, by \eqref{H-ineq}, we have a contradiction $\int_{\RN} H(u) \, dx <  \alpha \| u \|^2_2 / 2 $ 
since $u \in E \subset C(\RN \setminus \{0\})$ and $|u|(\RN) \cap (0,s) \neq \emptyset$ for every $s > 0$ due to $u \not \equiv 0$ (cf. \cite[Lemma 2.4]{BMS25}). 
\end{proof}

\section{Stability properties and comments on \ref{(g2)}}
\label{sec_stability}

This section is devoted to the proof of \cref{Thm:Exw/og2} and \cref{Cor:ex-g}. 
The proof is based on the continuity property for non-existence
of solutions to the zero-mass problem \eqref{eq_zero_mass}. 
To motivate \cref{Prop:non-ex} below, we note from the proof of \cite[Theorem 1.1]{CGIT24} that 
in order to show that $\ob_G > 0$ is a critical value of $I$, it is sufficient to assume -- rather than \ref{(g2)} -- the non-existence of positive solutions $u$ of \eqref{eq_zero_mass} with $Z(u) = \ob_G > 0$, where $Z$ is a functional corresponding to \eqref{eq_zero_mass} and defined by 
	\[
	Z(u) \coloneq \intRN \frac{1}{2} \abs{\nabla u}^2 - G(u) \, dx;
	\]
on the other hand, it is not clear if this condition is stable under perturbations of $g$.
Therefore we consider the following condition \ref{(g2w)}, which is 
weaker than \ref{(g2)}: for a fixed $\beta \geq 
0$ we define
\begin{enumerate}[label={\rm (g\arabic*$\beta$)}]
	\setcounter{enumi}{1}
	\item \label{(g2w)}
	there are no solutions $u$ of \eqref{eq_zero_mass} with $u \in F \setminus \{0\} $, $ u \geq 0$ and $Z
	(u) \leq \beta$.
\end{enumerate}
Condition \ref{(g2w)} for $\beta \geq \ob_G>0$ is sufficient to show the criticality of $\ob_G$.
In \cref{section_stab_nonext_zero}, we show that \ref{(g2w)} is stable under perturbations of $g$, 
and in \cref{section_exist_perturb}, \cref{Thm:Exw/og2} and \cref{Cor:ex-g} are proved. 

\subsection{Continuity property of non-existence of solutions to the zero mass problem} 
\label{section_stab_nonext_zero}

Throughout this subsection, we consider the non-existence of solutions to \eqref{eq_zero_mass} in a slightly general case. 
Let $q \in (1,2^*-1)$ and introduce the following function space and norm: 
\[
\begin{aligned}
	\calG_q &\coloneqq \big\{ G \in C^1(\R) \mid G(0) = 0, \ \text{$G$ is even}, 
	\ \norm{G}_{\calG_{q}} < \infty \big\},
	\quad 
	\norm{G}_{\calG_q} \coloneqq \sup_{s \in \R \setminus \{0\} } 
	\frac{ \abs{G(s)} + \abs{G'(s)s} }{\abs{s}^{q+1}}.
\end{aligned}
\]
We also introduce the following generalization 
of \ref{(g1*)}, 
that is
\begin{enumerate}[label={\rm (g1*$q$)}] 
\item \label{(g1*q)} $g$ satisfies 
 \begin{equation*}
 \lim_{s \to 0,\pm \infty}{\frac{g(s)
 }{|s|^{q-1}s}} =1.
 \end{equation*}
\end{enumerate}
We observe that, if \ref{(g0')} 
and \ref{(g1*q)} hold, then $G \in \calG_q$ and 
\[
\lim_{s \to 0, \pm \infty} \frac{G(s)}{|s|^{q+1} } = \frac{1}{q+1};
\]
moreover, \ref{(g1*)} implies \ref{(g1*q)} with $q=p$. 
For any $G \in \calG_{q}$, we consider the zero mass problem \eqref{eq_zero_mass} and use the following functional: 
\begin{equation*}
	Z_G(u) \coloneqq \half\norm{\nabla u}_2^2-\intRN G(u)\, dx : F_q \to \R,
\end{equation*}
where 
\[
F_q \coloneqq \big\{ u \in L^{q+1} (\RN) \mid \nabla u \in L^2(\RN), u(x) = u(\abs{x})\big\}, 
\quad \norm{u}_{F_q} \coloneqq \norm{\nabla u}_2 + \norm{u}_{q+1}. 
\]
We first collect some properties of $F_q$. 

\begin{Lemma}\label{Lem_basic-Fq}
The following facts hold.
	\begin{enumerate}[label={\rm (\roman*)}]
		\item 
		The space $C^\infty_{0,r}(\RN) \coloneqq \big\{ u \in C^\infty_0 (\RN) \mid u(x) = u(|x|) \big\}$ is dense in $F_q$. 
		\item 
		There exists $C=C(q,N)>0$ such that for any $R>0$ and $u \in F_q$, 
		\begin{equation}\label{eq_decay-Fq}
			\abs{u(R)} \leq C R^{ - 2(N-1)/(q+2) } \norm{ u }_{L^q(\RN \setminus B_R(0))}^{q/ (q+2) } \norm{\nabla u}_{L^2(\RN \setminus B_R(0))}^{2/(q+2)}.
		\end{equation}
		Furthermore, $F_q \subset C(\RN \setminus \{0\})$ holds. 
		\item 
		For any $r \in [q+1, 2^\ast]$ when $N \geq 3$ and $r \in [q+1,\infty)$ when $N = 2$, 
		the embedding $F_q \subset L^r(\RN)$ holds. 
		Furthermore, the embedding $F_q \subset L^r(\RN)$ is compact for every $r \in (q+1,2^*)$. 
	\end{enumerate}	
\end{Lemma}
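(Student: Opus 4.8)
The three statements are classical facts about the homogeneous Sobolev-type space $F_q$ (a radial version of $\dot H^1 \cap L^{q+1}$), and the plan is to establish them in the order (i), then (ii), then (iii), since (ii) uses only elementary ODE-type estimates and (iii) relies on (ii) for the pointwise decay that upgrades the embedding to a compact one. For part (i), the plan is a standard truncation-and-mollification argument: given $u \in F_q$, first cut off at large radii by multiplying with a radial function $\chi_R$ equal to $1$ on $B_R$ and supported in $B_{2R}$, checking that $\chi_R u \to u$ in both $\norm{\nabla \cdot}_2$ and $\norm{\cdot}_{q+1}$ as $R \to \infty$ (the gradient of the cutoff contributes a term controlled by $R^{-1}\norm{u}_{L^2(B_{2R}\setminus B_R)}$, which one handles via the $L^{q+1}$ membership and Hölder, or more simply by noting $\nabla u \in L^2$ already forces the tail to vanish); then mollify the compactly supported function, preserving radial symmetry by using a radial mollifier, and finally cut off near the origin if needed. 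One subtlety worth a remark: for $N = 2$ one must be slightly careful that radial functions in $F_q$ need not decay at the origin, but since we only claim density in $F_q$ (not in a space with pointwise control at $0$), the cutoff near $0$ is harmless.

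For part (ii), the plan is the classical radial-decay (Strauss-type) estimate adapted to the $L^q$--$\dot H^1$ pairing. Working with $u \in C^\infty_{0,r}(\RN)$ (legitimate by part (i), then passing to the limit), write for $r \geq R$
\[
\frac{d}{dr}\big( r^{N-1} |u(r)|^{(q+2)/2} \cdot \text{something} \big)
\]
— more precisely, the plan is to estimate $-\frac{d}{dr}\big(|u(r)|^{\sigma}\big)$ with a suitable exponent $\sigma$ chosen so that integrating against $r^{N-1}$ and applying Hölder with exponents $q+1$ and $2$ (for the $u'$ factor) produces exactly the exponents $q/(q+2)$ and $2/(q+2)$ appearing in \eqref{eq_decay-Fq}. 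Concretely one sets $\sigma = (q+2)/2$ (or a similarly chosen value), uses $|u(R)|^\sigma \leq \int_R^\infty |u(r)|^{\sigma-1}|u'(r)|\,dr$, inserts $r^{N-1}$ appropriately with the weight $r^{-(N-1)}$ balanced so that $\int r^{-(N-1)q'/\cdots}\,dr$ converges and gives the $R^{-2(N-1)/(q+2)}$ factor, and applies Hölder. The statement $F_q \subset C(\RN\setminus\{0\})$ then follows because the same estimate shows $u$ is locally uniformly the limit of its smooth radial approximants away from the origin (equicontinuity on annuli via a Morrey/interpolation argument, or directly from the decay bound applied to differences).

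For part (iii), the continuous embedding $F_q \hookrightarrow L^r$ for $r \in [q+1, 2^*]$ is by interpolation: $L^{q+1}$ is given, $L^{2^*}$ comes from the Sobolev inequality applied to $\nabla u \in L^2$ (with the endpoint $r = 2^*$ only when $N \geq 3$; for $N = 2$ one uses the Gagliardo–Nirenberg inequality controlling every $L^r$, $r < \infty$, by $\norm{\nabla u}_2$ and $\norm{u}_{q+1}$), and intermediate $r$ by Hölder interpolation between these two. For the compactness of $F_q \hookrightarrow L^r$ when $r \in (q+1, 2^*)$, the plan is the usual radial-compactness scheme: take a bounded sequence $(u_n) \subset F_q$, pass to a weak limit $u$ (weak in $\dot H^1$ and in $L^{q+1}$); on any annulus $B_{2R}\setminus B_{1/R}$ we have compact embedding into $L^r$ by Rellich–Kondrachov applied to the bounded-domain Sobolev space, so $u_n \to u$ in $L^r_{loc}(\RN\setminus\{0\})$; the tail $|x| \geq R$ is controlled by the pointwise decay estimate \eqref{eq_decay-Fq} of part (ii) uniformly in $n$ (so $\norm{u_n}_{L^r(|x|\geq R)} \to 0$ as $R\to\infty$ uniformly), and the region $|x| \leq 1/R$ is controlled by Hölder: $\norm{u_n}_{L^r(B_{1/R})} \leq |B_{1/R}|^{\theta}\norm{u_n}_{L^{2^*}}$ (or $\norm{u_n}_{L^{q+1}}$ depending on whether $r$ is below or above these) with a positive power of the small volume, again uniformly in $n$. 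Combining the three regions gives strong $L^r$ convergence.

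The main obstacle I expect is getting the precise exponents in \eqref{eq_decay-Fq} to come out exactly right — this is a one-parameter optimization in the choice of the interpolation exponent $\sigma$ in the Strauss estimate, and the weight bookkeeping with $r^{N-1}$ must be done carefully so that the power of $R$ is exactly $-2(N-1)/(q+2)$ and the $L^q$ and $L^2$ norms appear with powers $q/(q+2)$ and $2/(q+2)$; everything else (density, interpolation embeddings, the three-region splitting for compactness) is routine. It is also worth a short remark that the near-origin region requires $r$ strictly below $2^*$ (resp. $r < \infty$ for $N=2$) so that the Hölder power on $|B_{1/R}|$ is strictly positive, which is exactly the range stated.
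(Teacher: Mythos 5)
Your plan is essentially the paper's proof: part (ii) is exactly the paper's Strauss-type estimate with $\gamma=(q+2)/2$ and the weights $r^{\pm(N-1)}$ (only note that the correct splitting is Cauchy--Schwarz, i.e.\ the exponent pair $(2,2)$ applied to $|u|^{\gamma-1}r^{-(N-1)/2}$ and $|u'|r^{(N-1)/2}$ with $2(\gamma-1)=q$, not the pair ``$q+1$ and $2$'' you mention), the paper treats (i) as routine truncation/mollification just as you do, and for (iii) it simply refers to \cite[Lemma 2.3 (ii)]{CGIT24}. Your interpolation argument for the continuous embedding (Sobolev for $N\geq 3$, Gagliardo--Nirenberg for $N=2$) and the three-region scheme for compactness (Rellich on annuli, uniform tail decay from \eqref{eq_decay-Fq}, small-volume H\"older near the origin, using $r<2^*$ strictly) correctly supply the details the paper omits.
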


\begin{proof}
Assertion (i) is easy to prove. 
For assertion (ii), the inclusion $F_q \subset C(\RN \setminus \{0\})$ is deduced from 
the fact that by (i) and \eqref{eq_decay-Fq}, 
each function $u \in F_q$ can be approximated by functions in $C^\infty_{0,r}(\RN)$ in $L^\infty_{\rm loc} ((0,\infty))$. 
Thus, it suffices to show \eqref{eq_decay-Fq} for $u \in C^\infty_{0,r}(\RN)$ by virtue of (i). 
Let $u \in C^\infty_{0,r} (\RN)$ and set $\gamma \coloneqq (q+2)/2 \geq 1$. 
Then 
\[
\begin{aligned}
	\abs{u(R)}^\gamma 
	&\leq \int_R^\infty \gamma \abs{u(r)}^{\gamma -1} \abs{u'(r)} \, dr
	\\
	&\leq \gamma \left( \int_R^\infty r^{-(N-1)} \abs{u(r)}^{2\gamma - 2} \,dr \right)^{1/2} 
	\left( \int_R^\infty r^{N-1} \abs{u'(r)}^2 \, dr \right)^{1/2}
	\\
	&\leq 
	C_{q,N} \left( \int_R^\infty r^{ -2(N-1) } r^{N-1} \abs{u(r)}^q \, dr \right)^{1/2} \norm{\nabla u}_{L^2(\RN \setminus B_R(0))} 
	\\
	&\leq 
	C_{q,N} R^{ -(N-1) } \norm{u}_{L^q(\RN \setminus B_R(0))}^{ q/2 } \norm{\nabla u}_{L^2(\RN \setminus B_R(0))} .
\end{aligned}
\]
Hence, 
\[
\abs{u(R)} \leq C_{q,N} R^{ -2(N-1)/(q+2) } 
\norm{u}_{L^q(\RN \setminus B_R(0))}^{ q/ (q+2) } \norm{\nabla u}_{L^2(\RN \setminus B_R(0))}^{ 2/(q+2) } 
\]
and \eqref{eq_decay-Fq} holds. 
Finally, assertion (iii) may be proved as in \cite[Lemma 2.3 (ii)]{CGIT24} and we omit the details. 
\end{proof}

It is easily seen that $Z_G \in C^1(F_q,\R)$ for any $G \in \calG_{q}$ and 
critical points of $Z_G$ are solutions of \eqref{eq_zero_mass}. 
For any $\beta \in \R$, $\calK_{q}(G,(-\infty,\beta])$ denotes the set of critical
points of $Z_G$ whose critical values are in $(-\infty,\beta]$: 
\begin{equation*}
	\calK_{q}(G, (-\infty,\beta] ) 
	\coloneqq 
	\big\{ u\in F_q \mid 
		Z_G'(u)=0, \ Z_G(u) \leq \beta\big\}; \quad 
\calK_q^+(G, (-\infty,\beta] ) \coloneqq \calK_q(G, (-\infty, \beta]) \cap \{u\geq 0\}.
\end{equation*}
Notice that, in particular, \ref{(g2w)} means $\calK_{p}^+(G, (-\infty,\beta] ) =\{0\}$, and \ref{(g2)} means $\calK_{p}^+(G, (-\infty,+\infty) ) =\{0\}$.
For \eqref{eq_zero_mass} with $G \in \calG_{q}$ and $g = G'$, 
the argument in \cite[Proposition 1]{BeLi83I} is applicable and 
we may show that each critical point $u$ of $Z_G$ satisfies the Pohozaev identity: 
\begin{equation}\label{Pohoid}
	NZ_G(u) - \norm{\nabla u}_2^2 = \frac{N-2}{2} \norm{\nabla u}_2^2 - N \int_{\RN} G(u) \, dx = 0. 
\end{equation}
In particular, $Z_G(u) \geq 0$ holds for each $u \in F_q$ with $Z_G'(u) = 0$. 

\begin{Remark}
In \cite[Proposition 1.2 (i)]{CGIT24} we notice that, thanks to the results in \cite{AGQ16, AS11}, condition \ref{(g2)} holds when $N=2,3,4$, since for these values we have $p\leq\frac{N}{N-2}$. Arguing in a similar way, if $q \in (1, \frac{N}{N-2}]$ and $g$ satisfies \ref{(g0')} 
and \ref{(g1*q)}, 
then we can show more generally that $\calK_q^+(G, (-\infty, +\infty)) = \{0\}.$
\end{Remark}

The zero-mass problem \eqref{eq_zero_mass} has the following continuity property, which has an interest on its own.

\begin{Proposition} \label{Prop:non-ex}
	Let $\beta \in [0,\infty)$, $q \in (1,2^*-1)$, $g$ satisfy \ref{(g0')} 
	 and \ref{(g1*q)}, 
	 and $\calK_{q}( G, (-\infty,\beta] ) = \{0\} $. 
	Then there exists $\rho_{\beta}>0$ small such that
	$\calK_{q}(G+\Xi, (-\infty , \beta])=\{0\}$ holds for all $\Xi \in\calG_{q}$ with
	$\norm{\Xi}_{\calG_{q}}<\rho_{\beta}$. 
	A similar statement holds also for $\calK^+_q$.
	In particular, when $g_1$ fulfills \ref{(g0')}, \ref{(g1*)} with $\alpha = 0$ and \ref{(g2w)} for some $\beta \in [0,\infty)$, 
	then there exists $\rho_{\beta}>0$ such that \ref{(g2w)} also holds for $g = g_1 + \Xi'$ with $\Xi \in \calG_p$, $\norm{\Xi}_{\calG_{p}} < \rho_{\beta}$. 
\end{Proposition}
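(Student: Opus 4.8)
The plan is to argue by contradiction and, by a dilation (concentration--compactness) argument, reduce the matter to the Pohozaev nonexistence for the limiting pure-power equation. Suppose the conclusion fails: there are $\Xi_n\in\calG_q$ with $\norm{\Xi_n}_{\calG_q}\to 0$ and $u_n\in F_q\setminus\{0\}$ with $Z_{G+\Xi_n}'(u_n)=0$ and $Z_{G+\Xi_n}(u_n)\le\beta$; write $G_n\coloneqq G+\Xi_n$, $g_n\coloneqq G_n'$, and note $|g_n(s)|\le C|s|^q$, $|G_n(s)|\le C|s|^{q+1}$ uniformly in $n$ (by \ref{(g1*q)}, continuity of $g$, and the definition of $\norm{\cdot}_{\calG_q}$). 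First I would establish a uniform bound for $(u_n)$ in $F_q$: each $u_n$ satisfies the Pohozaev identity \eqref{Pohoid} for $G_n\in\calG_q$, so $\norm{\nabla u_n}_2^2=NZ_{G_n}(u_n)\le N\beta$ and $Z_{G_n}(u_n)\ge 0$, while testing the equation against $u_n$ gives the Nehari relation $\norm{\nabla u_n}_2^2=\intRN g_n(u_n)u_n\,dx$. Since $g_n(s)s\ge\frac12|s|^{q+1}$ whenever $|s|$ is small or large (by \ref{(g1*q)}), while by \cref{Lem_basic-Fq} and Chebyshev's inequality the intermediate set $\{\delta\le|u_n|\le M\}$ has uniformly bounded measure (Sobolev's embedding when $N\ge 3$; a more careful argument using the radial decay estimate \eqref{eq_decay-Fq} and the relation $\intRN G_n(u_n)\,dx=0$ coming from \eqref{Pohoid} when $N=2$), one gets $\norm{u_n}_{q+1}\le C$. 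Thus $(u_n)$ is bounded in $F_q$.

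Next, up to a subsequence $u_n\rightharpoonup u$ in $F_q$. By the compact embedding $F_q\subset L^r(\RN)$ for $r\in(q+1,2^*)$ (\cref{Lem_basic-Fq}) and a.e.\ convergence, one passes to the limit in the equation (the $\Xi_n'$-term vanishing since $\norm{\Xi_n}_{\calG_q}\to 0$) and finds $-\Delta u=g(u)$, so $u\in F_q$ is a critical point of $Z_G$; the Pohozaev identity \eqref{Pohoid} then gives $Z_G(u)=\frac1N\norm{\nabla u}_2^2\le\frac1N\liminf\norm{\nabla u_n}_2^2=\liminf Z_{G_n}(u_n)\le\beta$. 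Hence $u\in\calK_q(G,(-\infty,\beta])=\{0\}$, i.e.\ $u_n\rightharpoonup 0$.

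The remaining task --- contradicting $u_n\rightharpoonup 0$ with $u_n\ne 0$ --- is the crux, and the absence of an $L^2$-constraint makes the leading-order problem scaling invariant and the sequence non-compact. By \eqref{eq_decay-Fq} the $u_n$ are uniformly small outside a fixed ball, so the only possible loss of compactness is a concentration at the origin. Rescaling $v_n(x)\coloneqq\lambda_n^{2/(q-1)}u_n(\lambda_n x)$ with a concentration scale $\lambda_n\to 0$ chosen so that a fixed fraction of $\norm{\nabla u_n}_2^2+\norm{u_n}_{q+1}^{q+1}$ lies in $B_1(0)$ (if $u_n\to 0$ strongly in $F_q$, take instead $\lambda_n\to\infty$ normalising $\norm{\nabla v_n}_2=1$), an elementary computation with the scaling exponents shows $(v_n)$ remains bounded in $F_q$ with $\liminf\norm{v_n}_{F_q}>0$, while $v_n$ solves $-\Delta v_n=\tilde g_n(v_n)$ with $\tilde g_n(s)\to|s|^{q-1}s$ locally uniformly --- the point being that, as $\lambda_n\to 0$ (resp.\ $\to\infty$), the rescaled argument of $g_n$ runs to $0$ (resp.\ $\infty$), so \ref{(g1*q)} gives the limit $1$ in either case and the $\Xi_n'$-contribution is $O(\norm{\Xi_n}_{\calG_q})$ on bounded sets. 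Passing to the limit yields $v\in F_q\setminus\{0\}$ with $-\Delta v=|v|^{q-1}v$ in $\RN$. But the Pohozaev identity \eqref{Pohoid} for the pure power $\frac1{q+1}|s|^{q+1}$, namely $\frac{N-2}{2}\norm{\nabla v}_2^2=\frac{N}{q+1}\norm{v}_{q+1}^{q+1}$, together with the Nehari relation $\norm{\nabla v}_2^2=\norm{v}_{q+1}^{q+1}$, forces $\bigl(\frac{N-2}{2}-\frac{N}{q+1}\bigr)\norm{v}_{q+1}^{q+1}=0$; since $q<2^*-1$ the coefficient is nonzero, whence $v=0$, a contradiction.

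For the $\calK_q^+$-version it suffices to note that every operation above (weak $F_q$-limits, local $L^r$-limits, dilations) preserves nonnegativity, so the identical argument applies. Finally, the concluding assertion follows by taking $q=p$ --- so \ref{(g1*)} gives \ref{(g1*q)} --- observing that any $\Xi$ meeting the conditions in \eqref{def:X} belongs to $\calG_p$ with $\norm{\Xi}_{\calG_p}\le C(N)\norm{\Xi}_X$ (integrate the pointwise bounds on $\Xi'$ and use $p+1>2$), and that \ref{(g2w)} for $g_1$ reads exactly $\calK_p^+(G_1,(-\infty,\beta])=\{0\}$; the main statement then gives $\calK_p^+(G_1+\Xi,(-\infty,\beta])=\{0\}$, i.e.\ \ref{(g2w)} for $g_1+\Xi'$, whenever $\norm{\Xi}_{\calG_p}$ --- hence $\norm{\Xi}_X$ --- is small. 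I expect the main obstacle to be the vanishing step in the third paragraph: because the problem is non-compact one must organise the dilation analysis carefully (in particular ruling out iterated concentration) before one can extract a genuine finite-energy solution of the limiting pure-power equation; a secondary technical point is the a priori $L^{q+1}$-bound when $N=2$, where Sobolev's inequality is unavailable.
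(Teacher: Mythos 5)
Your setup (indirect argument, Pohozaev bound on $\|\nabla u_n\|_2$, Nehari relation for the $L^{q+1}$-bound, weak limit lying in $\calK_q(G,(-\infty,\beta])=\{0\}$) matches the paper's Steps 1--2, but the crux of your argument --- deriving a contradiction once $u_n\rightharpoonup 0$ --- is not actually proved, and you say so yourself. The dilation scheme you sketch has two unresolved problems: for $\lambda_n\to\infty$ the rescaling $v_n=\lambda_n^{2/(q-1)}u_n(\lambda_n\cdot)$ multiplies $\|u_n\|_{q+1}^{q+1}$ by $\lambda_n^{2(q+1)/(q-1)-N}$, a \emph{positive} power of $\lambda_n$ when $q<2^*-1$, so boundedness of $(v_n)$ in $F_q$ is not an ``elementary computation'' but an assumption; and the nontriviality of the rescaled weak limit (ruling out vanishing/iterated concentration) is exactly the loss of compactness you started with, so nothing has been gained. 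The paper avoids dilations entirely: combining the Pohozaev identity \eqref{Pohoid} with $Z_{G+\Xi_j}'(u_j)u_j=0$ and the vanishing of the $\Xi_j$-terms gives $\intRN G(u_j)-\frac{N-2}{2N}g(u_j)u_j\,dx\to 0$; since $H$ and $h$ are $o(|s|^{q+1})$ at $0$ and $\infty$, the uniform decay from \cref{Lem_basic-Fq} and a Strauss-type lemma let these lower-order contributions pass to the limit, and what remains is $\bigl(\frac{1}{q+1}-\frac{N-2}{2N}\bigr)\|u_j-u_0\|_{q+1}^{q+1}\to 0$, i.e.\ \emph{strong} $F_q$-convergence. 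Then, if the limit $u_0$ were $0$, elliptic regularity gives $\|u_j\|_\infty\to 0$, and the same Pohozaev--Nehari combination reads $0=\intRN |u_j|^{q+1}\bigl(\frac{N-2}{2N}-\frac{1}{q+1}-o(1)\bigr)dx<0$ for large $j$ (here $q<2^*-1$ is used), forcing $u_j\equiv 0$, a contradiction; hence $u_0\neq 0$, contradicting $\calK_q(G,(-\infty,\beta])=\{0\}$. This pointwise-smallness argument is the missing idea in your proposal.

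A second, acknowledged gap is the $L^{q+1}$-bound when $N=2$. Your sketch (bounded measure of the intermediate level set via \eqref{eq_decay-Fq} and $\int_{\R^2}G_n(u_n)\,dx=0$) is circular: in $\R^2$ the gradient bound gives no Sobolev control, and \eqref{eq_decay-Fq} itself requires an a priori bound on $\|u_n\|_{L^q(\R^2\setminus B_R)}$, which is precisely of the kind you are trying to establish. The paper handles $N=2$ by a separate contradiction argument: assuming $\|u_j\|_{q+1}\to\infty$, rescale $w_j=u_j(\cdot/\tau_j)$ with $\tau_j=\|u_j\|_{q+1}^{-(q+1)/2}$ (so $\|w_j\|_{q+1}=1$, $\|\nabla w_j\|_2=\|\nabla u_j\|_2$), show the weak limit solves $|w_\infty|^{q-1}w_\infty+h(w_\infty)=0$ and hence vanishes, and then contradict $\|w_j\|_{q+1}=1$ via Strauss' lemma. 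Your final paragraph (the $\calK_q^+$ version and the deduction of \ref{(g2w)}-stability for $q=p$, $\|\Xi\|_{\calG_p}\lesssim\|\Xi\|_X$) is fine, but as it stands the core of the proposition is not established.
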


\begin{proof}
We argue indirectly and assume the existence of a sequence 
$(\Xi_j)_{j\in\N}\subset\calG_{q}$ such that
\begin{equation*}\label{eq:9.1}
	\norm{\Xi_j}_{\calG_{q}} \to 0, \quad 
	\calK_q\big(G+\Xi_j, (-\infty,\beta]\big)\not=\{0\}.
\end{equation*}
Let thus $u_j\in \calK_q\big(G+\Xi_j, (-\infty,\beta]\big)\setminus\{0\}$ for $j\in\N$ and assume 
	\begin{equation} \label{eq:9.2}
		\norm{\Xi_j}_{\calG_q}< \frac{1}{4} \quad \text{for all}\ j.
	\end{equation}
We write $g\coloneqq G'$, 
$h(s) \coloneqq g(s) - \abs{s}^{q-1} s$ and $\xi_j \coloneqq \Xi_j'$. 

\smallskip

\noindent
\textbf{Step 1:} \textsl{$(u_j)_{j\in\N}$ is bounded in $F_q$.}

\smallskip

The facts $u_j \in \calK_q \big( G+\Xi_j, (-\infty , \beta] \big) $ and \eqref{Pohoid} give 
	\begin{align} 	
		&\half\norm{\nabla u_j}_2^2 -\intRN G(u_j)+\Xi_j(u_j)\, dx \leq \beta,	\label{eq:9.3}\\
		&\frac{N-2}{2}\norm{\nabla u_j}_2^2 -N\intRN G(u_j)+\Xi_j(u_j)\, dx=0.
			\label{eq:9.4}
	\end{align}
It follows from \eqref{eq:9.3} and \eqref{eq:9.4} that 
$\norm{\nabla u_j}_2^2 \leq N \beta$ and $\norm{\nabla u_j}_2^2$ is bounded.

We next argue as in \cite[Proposition 5.1]{HIT10} to prove that $\norm{u_j}_{q+1}$ is bounded. To this end, 
by \ref{(g1*q)},
the following hold: 
\begin{equation}\label{h*-super}
\lim_{s \to 0} \frac{h(s)}{ \abs{s}^{q-1}s } = 0 = \lim_{\abs{s} \to \infty} \frac{h(s)}{ \abs{s}^{q-1}s }. 
\end{equation}
When $N \geq 3$, by \eqref{eq:9.2}, 
there exists $C_0 > 0$ such that for all $j \geq 1$ and $s \in \R$, 
\[
\abs{h(s)} + \abs{\xi_j(s)} \leq \frac{1}{2} \abs{s}^q + C_0 \abs{s}^{2^*-1}. 
\]
Hence, the fact $Z_{G+\Xi_j} '(u_j) u_j = 0$ leads to 
\[
\norm{\nabla u_j}_2^2 = \int_{\RN} \abs{u_j}^{q+1} + h(u_j) u_j + \xi_j(u_j) u_j \, dx 
\geq \frac{1}{2} \norm{u_j}_{q+1}^{q+1} - C_0 \norm{u_j}_{2^*}^{2^*} \geq \frac{1}{2} \norm{u_j}_{q+1}^{q+1} - C \norm{\nabla u_j}_2^{2^*}.
\]
The boundedness of $\norm{\nabla u_j}_2$ implies that $\norm{u_j}_{q+1}$ is bounded. 

When $N=2$, we argue indirectly and suppose $\norm{u_j}_{q+1} \to \infty$. Set 
\[
\tau_j \coloneq \norm{u_j}_{q+1}^{-(q+1)/2}, 
\quad w_j \coloneq u_j(\cdot/\tau_j). 
\]
It is immediate to see $\norm{\nabla w_j}_2^2 = \norm{\nabla u_j}_2^2$ and $\norm{w_j}_{q+1} = 1$. 
Let $w_j \rightharpoonup w_\infty$ weakly in $F_q$. Notice that 
\[
-\tau_j^2\Delta w_j = \abs{w_j}^{q-1}w_j + h(w_j) + \xi_j (w_j) \quad \text{in} \ \R^2.
\]
Since 
\begin{equation}\label{eq_bound_xijq}
\abs{\xi_j(s)} \leq \norm{\Xi_j}_{\calG_q} \abs{s}^{q} \quad \text{for each $s \in \R$}, \quad \norm{\Xi_j}_{\calG_q} \to 0,
\end{equation}
it follows from \cref{Lem_basic-Fq} that for every $\varphi \in C^\infty_{0,r}(\R^2)$, 
\[
\begin{aligned}
	0 = \lim_{j \to \infty} \tau_j^2 \int_{\R^2} \nabla w_j \cdot \nabla \varphi \, dx 
	&= \lim_{j \to \infty} \int_{\R^2} \left( |w_j|^{q-1}w_j + h(w_j) + \xi_j(w_j) \right) \varphi \, dx 
	\\
	&= \int_{\R^2} \left( \abs{w_\infty}^{q-1} w_\infty + h(w_\infty) \right) \varphi \, dx. 
\end{aligned}
\]
In particular, $\abs{w_\infty}^{q-1} w_\infty + h(w_\infty) = 0$ holds. 
From $F_q \subset C(\R^2 \setminus \{0\})$, \eqref{h*-super} and $w_\infty(x) \to 0$ as $\abs{x} \to \infty$ thanks to \cref{Lem_basic-Fq}, 
we deduce $w_\infty = 0$. 

Since \cref{Lem_basic-Fq}, Strauss' lemma (see \cite[Compactness Lemma 2]{Str77} or \cite[Theorem A.I]{BeLi83I}), 
\eqref{h*-super} and $\norm{\Xi_j}_{\calG_q} \to 0$ lead to 
\[
\lim_{j \to \infty}\int_{\R^2} h(w_j)w_j \, dx = 0, 
\quad \limsup_{j \to \infty} \int_{\R^2} \abs{\xi_j(w_j) w_j} dx 
\leq \limsup_{j \to \infty} \norm{\Xi_j}_{\calG_q} \norm{w_j}_{q+1}^{q+1} = 0, 
\]
and the following contradiction is obtained: 
\[
1 = \norm{w_j}_{q+1}^{q+1} = \lim_{j \to \infty} \int_{\R^2} \abs{w_j}^{q+1} + h(w_j)w_j + \xi_j(w_j) w_j \, dx 
= \lim_{j \to \infty} \tau_j^2 \norm{\nabla w_j}_2^2 = 0.
\]
Hence $\norm{u_j}_{q+1}$ is bounded when $N = 2$. 

\smallskip

By Step 1, we may assume that $u_j \wlimit u_0$ weakly in $F_q$ for some $u_0 \in F_q$. 
From $\abs{\Xi_j(s)} \leq \norm{\Xi_j}_{\calG_q} \abs{s}^{q+1}$ and $\abs{\xi_j(s)} \leq \norm{\Xi_j}_{\calG_q} \abs{s}^q$, 
we may verify 
\begin{equation}\label{eq:H_jh_j}
	\int_{\RN} \abs{\Xi_j(u_j)} \, dx \to 0, \quad \norm{\xi_j(u_j)}_{(q+1)/q} \to 0. 
\end{equation}

\smallskip

\noindent
\textbf{Step 2:} \textsl{$u_j\to u_0$ strongly in $F_q$.}

\smallskip

Notice first that $Z_{G}'(u_0) = 0$ due to $\norm{\Xi_j}_{\calG_q} \to 0$, $Z_{G+\Xi_j}'(u_j) = 0$ and 
$u_j \rightharpoonup u_0$ weakly in $F_q$. 
We set $v_j\coloneqq u_j-u_0$. From \eqref{h*-super} and $\sup_{j} \abs{u_j(x)} \to 0$ as $\abs{x} \to \infty$ by \cref{Lem_basic-Fq}, it follows that 
\begin{equation}\label{h*u_jv_j}
	\begin{aligned}
		&\int_{\RN} h(u_j) u_j dx = \int_{\RN} h(u_0)u_0 \, dx + o(1), \quad 
		\int_{\RN} H(u_j) \, dx = \int_{\RN} H(u_0) \, dx + o(1),
		\\
		& \int_{\RN} h(v_j)v_j dx = o (1) = \int_{\RN} H(v_j) \, dx. 
	\end{aligned}
\end{equation}
Hence, the Brezis-Lieb lemma leads to 
\begin{align} 	
	&\intRN G(u_j) \,dx = \intRN G(u_0)\, dx +\intRN G(v_j)\, dx +o(1), \label{eq:9.5}\\
	&\intRN g(u_j)u_j \,dx = \intRN g(u_0)u_0\, dx +\intRN g(v_j)v_j\, dx +o(1).
	\label{eq:9.6}
\end{align}
Furthermore, \eqref{eq:9.4} and $Z_{G+\Xi_j}'(u_j) u_j = 0 $ yield 
\begin{equation}\label{eq:PI-GXi_j}
	\frac{N-2}{2}\intRN g(u_j)u_j +\xi_j(u_j)u_j\, dx
	-N \intRN G(u_j)+\Xi_j(u_j)\, dx = 0
\end{equation}
and \eqref{eq:H_jh_j} gives 
\begin{equation} \label{eq:9.7}
	\frac{N-2}{2}\intRN g(u_j)u_j\, dx -N \intRN G(u_j)\, dx \to 0.
\end{equation}
Since $Z'_{G} (u_0) = 0$, we also have 
\begin{equation} \label{eq:9.8}
	\frac{N-2}{2}\intRN g(u_0)u_0\, dx -N \intRN G(u_0)\, dx = 0.
\end{equation}
Thus, by \eqref{h*u_jv_j}--\eqref{eq:9.8}, 
\[
\begin{aligned}
	o(1) 
	&= \int_{\RN} G(u_j) - \frac{N-2}{2N}g(u_j) u_j \, dx 
	\\
	&= \int_{\RN} G(v_j) - \frac{N-2}{2N} g(v_j) v_j \, dx + o(1) = \left(\frac{1}{q+1} - \frac{N-2}{2N}\right) \norm{v_j}_{q+1}^{q+1} + o(1),
\end{aligned}
\]
which implies $\norm{v_j}_{q+1} \to 0$ and $u_j \to u_0$ strongly in $L^{q+1} (\RN)$. Finally, from 
\[
\norm{\nabla u_j}_2^2 = \int_{\RN} g(u_j) u_j + \xi_j(u_j) u_j \, dx \to \int_{\RN} g(u_0) u_0 \, dx = \norm{\nabla u_0}_2^2, 
\]
we infer that $\norm{\nabla u_j - \nabla u_0}_2 \to 0$ and $u_j\to u_0$ strongly in $F_q$.

\smallskip

\noindent
\textbf{Step 3:} \textsl{Conclusion.}

\smallskip

\noindent
By Step 2, we have $Z_{G}'(u_0) = 0$ and 
\begin{equation*}
	\begin{aligned}
		Z_{G}(u_0)&= \lim_{j\to\infty} Z_{G}(u_j) 
		= \lim_{j\to\infty} \left(Z_{G+\Xi_j}(u_j) + \intRN \Xi_j(u_j)\, dx\right) \leq \beta. 
	\end{aligned}
\end{equation*}
To complete the proof, it suffices to show $u_0 \not \equiv 0$ since this yields $u_0\in\calK_q(G, (-\infty,\beta])\setminus\{0\}$, 
which contradicts the assumption $\calK_q(G,(-\infty ,\beta])=\{0\}$. 
Suppose on the contrary that $u_0 \equiv 0$. 
Since $\norm{u_j - u_0}_{F_q} \to 0$ and $u_j$ is a solution to 
\[
-\Delta u_j = g (u_j) + \xi_j(u_j) \quad \text{in} \ \RN, 
\]
the bound \eqref{eq_bound_xijq} 
with elliptic regularity 
implies $\norm{u_j}_\infty = \norm{u_j - u_0}_\infty \to 0$. 
From \eqref{eq:PI-GXi_j}, \eqref{eq_bound_xijq}, \ref{(g1*q)}, 
$q \in (1,2^*-1)$ and $u_j \not \equiv 0$, it follows that for sufficiently large $j$, 
\[
\begin{aligned}
	0 &= \int_{\RN} \frac{N-2}{2N} \big( g(u_j) u_j + \xi_j(u_j) u_j \big) - G(u_j) - \Xi_j(u_j) \, dx 
	\\
	&= \int_{\RN} |u_j|^{q+1} \left( \frac{N-2}{2N} - \frac{1}{q+1} - o(1) \right) dx < 0.
\end{aligned}
\]
This is a contradiction and $u_0 \not \equiv 0$ holds. The case $\calK^+_q$ follows in a similar way, once we notice that $u_j \geq 0$ implies $u_0 \geq 0$.
\end{proof}

\subsection{Proof of Theorem \ref{Thm:Exw/og2} and Corollary \ref{Cor:ex-g}} 
\label{section_exist_perturb}

As an application of \cref{Prop:non-ex}, 
we show 
\cref{Thm:Exw/og2} and \cref{Cor:ex-g}. 
Let $g_1$ satisfy \ref{(g0')} and \ref{(g1*)} with $\alpha = 0$. 
Before proving the results, 
we recall the space $X$ from \eqref{def:X}-\eqref{def:normX} and notice that 
\begin{enumerate}[label=(\roman*)]
	\item 
	$g_1 + \Xi$ satisfies \ref{(g0')} and \ref{(g1*)} with $\alpha = 0$ (thus, in particular, \ref{(g1*q)} with $q=p$) 
	for 
	any $\Xi \in X$;
	\item 
	for any $\Xi \in X$, $\norm{\Xi}_{\calG_p} \leq 2\norm{\Xi}_{X}$ holds. 
\end{enumerate}
For any $\Xi \in X$, we define $I_{G_1+\Xi} \in C^1(\R \times E, \R)$ and $\Psi_{\mu,G_1+\Xi} \in C^1(E,\R)$ by (recall $\mu=e^{\lambda}$)
\[
I_{G_1+\Xi}(\lambda,u) \coloneqq \half\norm{\nabla u}_2^2-\intRN G_1(u)+\Xi(u)\,dx
+\mu\left( \half\norm u_2^2-m_1\right) 
\eqqcolon \Psi_{\mu,G_1+\Xi}(u) - \mu m_1. 
\]
We also consider balls in $X$ by writing, for any $\rho>0$, 
\[B^X_{\rho}\coloneqq \big\{ \Xi \in X \mid \norm{\Xi}_X <\rho \big\}.\]
Write also $h_1(s) \coloneqq g_1(s) - \abs{s}^{p-1} s$ and $\xi \coloneqq \Xi'$. 

To prove \cref{Thm:Exw/og2}, we need a uniform $2$-dimensional path class for $I_{G_1+\Xi}$ with respect to $\Xi$. 
For this purpose, by \eqref{def:X} and \eqref{def:normX}, there exists $A_1 \geq 1$ such that 
\begin{equation}\label{eq_unif_H1Xi}
	\abs{H_1(s)} + \abs{\Xi(s)} \leq A_1 s^2 \quad \text{for all $s \in \R$ and $\Xi \in B^X_{1/3}$}. 
\end{equation}
Notice that \eqref{eq_estimate_GN} and \eqref{eq_unif_H1Xi} imply that 
for every $(\lambda,u) \in \R \times E$ and $\Xi \in B^X_{1/3}$ with $\| u \|_2^2 \leq 2m_1$, 

\begin{equation}\label{eq_bddest1}
	I_{G_1+\Xi} (\lambda, u) = \frac{1}{2} \norm{\nabla u}_2^2 - \frac{1}{p+1} \norm{u}_{p+1}^{p+1} - \int_{\RN} H_1(u) + \Xi(u) \,dx 
	\geq - 2 A_1 m_1.
\end{equation}
We next prove the existence of $\rho_0 \in (0,1/3)$ and $\zeta_1 \in C^2(\R,E)$, which are independent of $\Xi$, such that 
\eqref{eq_zeta0_prop} holds with $I_{G_1+\Xi}$ and $\Psi_{\mu,G_1+\Xi}$ for any $\Xi \in B^X_{\rho_0}$.

\begin{Lemma}\label{Lem_zeta1rho1}
	There exist $\rho_0 \in (0,1/3)$ and $\zeta_1 \in C^2( \R, E )$ such that $\zeta_1\geq 0$ and
	\begin{enumerate}[label={\rm (\roman*)}]
		\item 
		for any $\lambda \in \R$, 
		$\norm{\zeta_1(\lambda)}_2^2 > 2m_1$; 
		\item 
		for each $\lambda \in \R$ and $\Xi \in B^X_{\rho_0}$, 
		$I_{G_1+\Xi} (\lambda, \zeta_1 (\lambda) ) \leq - 2 A_1 m_1 - 1 - \mu m_1 $; 		
		\item 
		for every $\Xi \in B^X_{\rho_0}$, $\max_{0 \leq t \leq 1} I_{G_1+\Xi} (\lambda, t\zeta_1(\lambda) ) \to 0$ as $\abs{\lambda} \to \infty$; 
		here the convergence may depend on $\Xi \in B^X_{\rho_0} $. 
	\end{enumerate}
\end{Lemma}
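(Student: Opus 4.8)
The plan is to build $\zeta_1$ by hand, using two different model families according to the size of $\mu=e^\lambda$ and gluing them $C^2$-ly over a compact range of $\lambda$. Abbreviate $\psi_\flat(r):=\tfrac{N+2}{2}r^2-\tfrac N2 r^{p+1}$, so that by \eqref{eq_scaling_omega} one has $\Psi_{0\mu}(r\omega_\mu)=\mu m_1\psi_\flat(r)$; note $\psi_\flat(1)=1=\max_{r\in[0,1]}\psi_\flat(r)$ and $\psi_\flat(r)\to-\infty$. Since $G_1(s)=\tfrac1{p+1}|s|^{p+1}+H_1(s)$ with $H_1(s)=o(|s|^{p+1})$ at $s=0$ and $o(s^2)$ at infinity, $G_1$ is negative at most on a bounded set separated from the origin, so a sufficiently tall fixed bump $\phi\in C^\infty_{0,r}(\R^N)$, $\phi\geq0$, satisfies $\int_{\R^N}G_1(t\phi)\,dx>0$ for all $t\in(0,1]$; fix such a $\phi$, together with $R_0>1$ with $\psi_\flat(R_0)<0$ and an exponent $\sigma\in(0,1/N)$. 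One then sets, for $\mu$ small, $\zeta_1(\lambda):=\phi(\cdot/\theta(\mu))$ with $\theta(\mu):=\mu^{-\sigma}\to\infty$; for $\mu$ large, $\zeta_1(\lambda):=R_0\omega_\mu$; and one interpolates $C^2$-ly in between, choosing the thresholds $\mu_0$ (small), $\mu_1$ (large) and the radius $\rho_0$ (small) at the end.

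The small-$\mu$ regime rests on the $L^2$-critical identity $\tfrac4{p-1}=N$. Writing $v_\theta:=\phi(\cdot/\theta)$ one has
\[
Z_{G_1+\Xi}(tv_\theta)=\theta^{N-2}\Big(\tfrac{t^2}{2}\|\nabla\phi\|_2^2-\theta^2\!\int_{\R^N}(G_1+\Xi)(t\phi)\,dx\Big),\qquad \Psi_{\mu,G_1+\Xi}(tv_\theta)=Z_{G_1+\Xi}(tv_\theta)+\tfrac{\mu t^2}{2}\theta^N\|\phi\|_2^2.
\]
For $\rho_0$ small, $\int(G_1+\Xi)(t\phi)\,dx$ is bounded below by a fixed positive multiple of $t^{p+1}$ for $t$ near $0$ and by a positive constant for $t\in[t_1,1]$ (some $t_1\in(0,1)$), uniformly in $\Xi\in B^X_{\rho_0}$; maximizing the resulting one-variable upper bound for $Z_{G_1+\Xi}(tv_\theta)$ and using $N-2-\tfrac4{p-1}=-2$ gives $\max_{t\in[0,1]}Z_{G_1+\Xi}(tv_\theta)\leq C\theta^{-2}$, while on $[t_1,1]$ this quantity is in fact $\leq-c\,\theta^N$. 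Combined with $\max_{t\in[0,1]}\tfrac{\mu t^2}{2}\theta^N\|\phi\|_2^2=\tfrac12\mu^{1-N\sigma}\|\phi\|_2^2$ and the trivial bound $\geq-\mu m_1$, one gets $\max_{t\in[0,1]}I_{G_1+\Xi}(\lambda,tv_\theta)\to0$ as $\lambda\to-\infty$ (here $\sigma<1/N$ is used so that $\mu\theta^N\to0$), which is (iii) on that side; moreover $\Psi_{\mu,G_1+\Xi}(v_\theta)\leq\theta^{N-2}\big(\tfrac12\|\nabla\phi\|_2^2-\tfrac12\nu_1\theta^2\big)\to-\infty$ once $\mu\leq\mu_0$ (so that $\tfrac\mu2\|\phi\|_2^2$ is below a uniform positive lower bound $\nu_1$ for $\int(G_1+\Xi)(\phi)\,dx$) and $\theta(\mu_0)$ is large, which is (ii); and (i) is immediate from $\|v_\theta\|_2^2=\theta^N\|\phi\|_2^2$.

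The large-$\mu$ regime exploits that $t\mapsto tR_0\omega_\mu$ is already the optimal mountain-pass path for the pure-power functional, $\Psi_{0\mu}(tR_0\omega_\mu)=\mu m_1\psi_\flat(tR_0)$ with maximum $\mu m_1$. Here $\Psi_{\mu,G_1+\Xi}(tR_0\omega_\mu)=\mu m_1\psi_\flat(tR_0)-\int(H_1+\Xi)(tR_0\omega_\mu)\,dx$, and the crucial point is that, for each fixed $\Xi\in B^X_{\rho_0}$, $\delta_\mu:=\sup_{t\in[0,1]}\big|\int_{\R^N}(H_1+\Xi)(tR_0\omega_\mu)\,dx\big|\to0$ as $\mu\to\infty$: substituting $y=\mu^{1/2}x$ and using $\omega_\mu=\mu^{N/4}\omega_1(\mu^{1/2}\cdot)$, on the set where the argument is $\geq1$ one dominates the integrand by a fixed integrable function via $|H_1(s)|+|\Xi(s)|=o(s^2)$ and applies dominated convergence, while on the set where it is $<1$ the exponential decay of $\omega_1$ makes the relevant integral vanish. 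Hence $\max_{t\in[0,1]}I_{G_1+\Xi}(\lambda,tR_0\omega_\mu)\in[-\delta_\mu,\delta_\mu]\to0$, which is (iii) as $\lambda\to+\infty$, while $\Psi_{\mu,G_1+\Xi}(R_0\omega_\mu)\leq\mu m_1\big(\psi_\flat(R_0)+\rho_0 R_0^{p+1}(N+2)\big)+C\to-\infty$ once $\rho_0$ is small enough to make the bracket negative, which is (ii) for $\mu\geq\mu_1$; and (i) holds since $R_0>1$. It remains to fix $\mu_0,\mu_1,\rho_0$ and to glue the two families over $[\mu_0,\mu_1]$ by a $C^2$ path of nonnegative functions maintaining (i) and (ii); since on this compact $\mu$-range those are open conditions holding with room to spare at both endpoints (enlarge $R_0$, shrink $\mu_0$) and any nonzero function can be driven to arbitrarily negative $\Psi_\mu$ by scaling it up, this is a routine construction, as in \cite{CGIT24}; (iii) is vacuous there.

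The main obstacle is the uniform-in-$\Xi$ bookkeeping in (ii) together with the necessity of the two-regime design: a single $\omega_\mu$-based family cannot serve for small $\mu$, because the ratio $\Psi_{0\mu}(R\omega_\mu)/\|R\omega_\mu\|_2^2$ tends to $0$, so making $\Psi_{\mu,G_1+\Xi}(\zeta_1(\lambda))\leq-2A_1m_1-1$ uniformly would force $R(\mu)\gtrsim\mu^{-N/4}$, and then the $\Xi$-perturbation along $t\,R(\mu)\omega_\mu$ — of size comparable to $\rho_0$ rather than $o(1)$ — destroys (iii); it is precisely the dilation of a fixed, non-vanishing bump that resolves the small-$\mu$ case, whereas the $\omega_\mu$-profile, being the power ground state, is what makes the straight segment near-optimal and hence (iii)-compatible for large $\mu$.
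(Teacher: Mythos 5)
Your large-$\mu$ regime ($R_0\omega_\mu$, later $T_1\omega_\mu$ in the paper) and the gluing over a compact $\lambda$-interval via path-connected sublevel sets, a compactness/uniformity argument in $\Xi$ and mollification are in line with the paper's proof, and those parts are fine in outline. The genuine gap is in the small-$\mu$ regime. Your whole construction there rests on the assertion that ``a sufficiently tall fixed bump $\phi$'' satisfies $\int_{\R^N}G_1(t\phi)\,dx>0$ for \emph{all} $t\in(0,1]$, deduced from ``$G_1$ is negative at most on a bounded set separated from the origin''. This does not follow from \ref{(g0')} and \ref{(g1*)}: on a fixed intermediate range of values, say $s\in[1,2]$, $G_1$ may be negative with arbitrarily large depth $K$, while near $0$ it is only of size $|s|^{p+1}$. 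For any fixed profile $\phi$ with $\|\phi\|_\infty=M\geq 2$ and for $t$ with $tM\in[1,2]$, the set where $t\phi$ lies in the bad window contains a neighborhood of the maximum set of $\phi$, the positive contribution comes only from the small values $t\phi<1$ and is bounded independently of $K$, so the integral is negative for $K$ large; making $\phi$ taller merely shifts (and enlarges) the range of problematic $t$, since the family $t\phi$, $t\in(0,1]$, always sweeps through every amplitude up to $\|\phi\|_\infty$. This is not a cosmetic point: your proof of (iii) as $\lambda\to-\infty$ needs a positive lower bound for $\int(G_1+\Xi)(t\phi)\,dx$ on the \emph{whole} segment (otherwise $\max_t Z_{G_1+\Xi}(tv_\theta)$ is of order $\theta^{N-2}\,t^2\|\nabla\phi\|_2^2$ on the bad $t$-range, which does not tend to $0$ and blows up for $N\geq3$), so the gap cannot be bypassed by using the bump only at $t=1$.

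The obstruction you invoke in your last paragraph to justify this design is also not real, and recognizing that is exactly how the paper closes the gap: for $\lambda\to-\infty$ the paper uses $\zeta_1(\lambda)=T_0\mu^{-1/(p+1)}\omega_\mu$, whose amplitude $T_0\mu^{N/4-1/(p+1)}$ tends to $0$, so along the entire segment $t\zeta_1(\lambda)$ all function values are small; there the $X$-norm controls $|H_1(s)|+|\Xi(s)|\leq\frac{1}{2(p+1)}|s|^{p+1}$ (see \eqref{eq_est-lsmall}), i.e. the perturbation is a fixed \emph{fraction} of the power term, not an $O(\rho_0)$ absolute error as you claim. Since $\frac{N+2}{2(p+1)}=\frac N4<\frac N2$, the explicit computation with \eqref{eq_scaling_omega} gives
\begin{equation*}
\max_{0\leq t\leq1} I_{G_1+\Xi}\bigl(\lambda, t\,T_0\mu^{-1/(p+1)}\omega_\mu\bigr)
\leq m_1\max_{0\leq t\leq1}\Bigl(\tfrac{N+2}{2}T_0^2\,\mu^{(p-1)/(p+1)}t^2-\tfrac N4 T_0^{p+1}t^{p+1}\Bigr)\longrightarrow 0
\end{equation*}
as $\mu\to0$, uniformly in $\Xi\in B^X_{1/3}$, and taking $T_0$ large also yields (ii) for $\lambda\leq\lambda_0$. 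Your assertion that enforcing (ii) would force $R(\mu)\gtrsim\mu^{-N/4}$ is off as well: $\Psi_{0\mu}(R\omega_\mu)=\mu m_1\psi_\flat(R)$ only requires $R\gtrsim\mu^{-1/(p+1)}$, which is precisely the paper's choice and keeps the amplitude vanishing. So the correct repair of your proof is to replace the dilated fixed bump by this $\mu$-dependent rescaling of $\omega_\mu$ (values driven into the region where \ref{(g1*)} gives control), after which the rest of your argument matches the paper's.
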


\begin{proof}
The following argument is a refinement of \cite[Lemma 3.3 and Subsection 3.3
]{CGIT24}. 
By \eqref{def:normX}, there exists $s_0 > 0$ such that
\begin{equation}\label{eq_est-lsmall}
\abs{H_1(s)} + \abs{\Xi(s)} \leq \frac{1}{2} 
\frac{1}{p+1} \abs{s}^{p+1} \quad \text{for each $s \in [0,s_0]$ and $\Xi \in B^X_{1/3}$}. 
\end{equation}
Arguing as in \cite[Proof of Lemma 3.3 (i)]{CGIT24} and fixing $T_0 > 1$ so that 
\[
- \frac{T_0^{p+1}}{2(p+1)} \norm{\omega_1}_{p+1}^{p+1} < -4 A_1 m_1 - 2, 
\]
we may prove via \eqref{eq_est-lsmall} that 
\[
\limsup_{\lambda \to - \infty } \sup_{\Xi \in B^X_{1/3}} I_{G_1+\Xi} (\lambda, T_0 \mu^{-1/(p+1)} \omega_\mu ) < -4 A_1 m_1 - 2.
\]
Hence there exists $\lambda_0 \in \R$, $\mu_0 \coloneqq e^{\lambda_0}$, such that for all $\lambda \in (-\infty,\lambda_0]$ 
\begin{equation}\label{eq_est_ab1}
	\sup_{\Xi \in B^X_{1/3}} I_{G_1+\Xi} ( \lambda, T_0 \mu^{-1/(p+1)} \omega_\mu ) < -2 A_1 m_1 - 2 -\mu m_1.
\end{equation}
On the other hand, it follows from \eqref{eq_unif_H1Xi} that for all $t \in \R$, 
$\Xi \in B^X_{1/3}$ and $\lambda \in \R$, 
\[
I_{G_1+\Xi}(\lambda, t\omega_\mu) \leq \mu \left( \Psi_{01} (t \omega_1) + \frac{2t^2 A_1 m_1}{\mu} \right) - \mu m_1.
\]
Therefore, we may find $T_1 \geq T_0$ such that 
\begin{equation}\label{eq_est_ab2}
	\sup_{\Xi \in B^X_{1/3}} I_{G_1+\Xi} (\lambda, T_1 \omega_\mu) < -2A_1 m_1 - 2 - \mu m_1\quad \text{for all $\lambda \geq \lambda_0$}. 
\end{equation}
	
To construct $\zeta_1\in C^2(\R,E)$, notice that, by \eqref{eq_est_ab1} and \eqref{eq_est_ab2} with $\mu=\mu_0 
$ and $\Xi=0$, we have $\Psi_{\mu_0, G_1} (T_0 \mu_0^{-1/(p+1)} \omega_{\mu_0} ) < -2A_1 m_1 - 2$ 
and $\Psi_{\mu_0, G_1} (T_1 \omega_{\mu_0}) < -2A_1 m_1 -2$. 
Since the set $\{u \in E \mid \Psi_{\mu_0, G_1} < -2A_1 m_1 - 2 \}$ is path-connected (see \cite[Proposition 3.1 (iii)]{CGIT24}), 
there exists $\sigma_1 \in C( [0,1] , E )$ such that 
\[
\sigma_1(0) = T_0 \mu_0^{-1/(p+1)} \omega_{\mu_0}, \quad \sigma_1(1) = T_1 \omega_{\mu_0}, \quad 
\Psi_{\mu_0, G_1} (\sigma_1(t)) < -2A_1 m_1 - 2 \quad \text{for all $t \in [0,1]$}. 
\]
By changing $\sigma_1(t)$ to $\abs{\sigma_1(t)}$ and noting $\Psi_{\mu_0, G_1} (\sigma_1(t)) = \Psi_{\mu_0, G_1} (\abs{\sigma_1(t)})$, 
without loss of generality we may suppose $\sigma_1(t) \geq 0$ for each $t \in [0,1]$. 
Furthermore, $\norm{\sigma_1 (t)}_2^2 > 2m_1$ holds due to 
$I_{G_1} (\lambda_0, \sigma_1(t)) < - 2A_1 m_1 - 2- \mu_0 m_1$ and \eqref{eq_bddest1} with $\Xi=0$. 

The desired path $\zeta_1$ is now essentially obtained joining the three paths $\lambda \in (-\infty, \lambda_0] \mapsto (\lambda, T_0 \mu^{-\frac{1}{p+1}} \omega_{\mu})$, $s \in [0,1] \mapsto (\lambda_0, \sigma_1(s))$ and $\lambda \in [\lambda_0, +\infty) \mapsto (\lambda, T_1 \omega_{\mu})$, and then regularizing. We give details for the reader's convenience.

To proceed, for $\delta \in (0,1)$, consider the 
compact set 
\[
K_\delta \coloneqq \sigma_1([0,1]) \cup \big\{ (1-\theta) T_1 \omega_{\mu_0} + \theta T_1 \omega_\mu \mid 0 \leq \theta \leq 1, \ \lambda_0 \leq \lambda \leq \lambda_0+3\delta \big\}
\subset E. 
\]
It is not hard to find $\delta_0 \in (0,1)$ such that
\begin{equation}\label{eq_est_ab3}
	\sup \big\{ I_{G_1}(\lambda, u) \mid (\lambda, u) \in [ \lambda_0 - 3\delta_0 , \lambda_0 + 3\delta_0 ] \times N_{\delta_0} ( K_{\delta_0} ) \big\}
	< -2A_1 m_1 - 2 - \mu_0 m_1,
\end{equation}
where $N_\delta (A) \coloneqq \{ u \in E \mid \distE (u,A) < \delta \}$. Set 
\[
L_0 \coloneqq \sup_{u \in N_{\delta_0} (K_{\delta_0})} \norm{u}_2^2. 
\]
From $\abs{\Xi(s)} \leq \norm{\Xi}_X s^2 /2$, it follows that for each $\rho>0$ and $\Xi \in B_\rho^X$, 
\[
\begin{aligned}
	&\sup_{ u \in N_{\delta_0}(K_{\delta_0}) } 
	\int_{\RN} \abs{\Xi( u )} \, dx \leq 
	\sup_{ u \in N_{\delta_0}(K_{\delta_0}) } \frac{\norm{\Xi}_X}{2} \norm{u}_2^2 
	\leq \frac{L_0}{2} \rho. 
\end{aligned}
\]
Since 
\[
\abs{I_{G_1+\Xi} (\lambda, u) - I_{G_1} (\lambda, u)} \leq \int_{\RN} \abs{\Xi(u)} \,dx \leq \frac{L_{0}}{2} \rho 
\]
holds for each $\Xi \in B^X_{\rho}$ and $(\lambda,u) \in [\lambda_0 - 3 \delta_0 , \lambda_0 + 3\delta_0] \times N_{\delta_0} (K_{\delta_0})$, 
by \eqref{eq_est_ab3}, there exists sufficiently small $\rho_0 \in (0,1/3)
$ such that the following uniform estimate holds
\begin{equation}\label{eq_est_ab4}
	\sup \Big\{ I_{G_1+\Xi}(\lambda, u) \mid (\lambda, u) \in [ \lambda_0 - 3\delta_0 , \lambda_0 + 3\delta_0 ] \times N_{\delta_0} ( K_{\delta_0} ), \, \Xi \in B^X_{\rho_0} \Big\}
	< -2A_1 m_1 - 2 
- \mu_0 m_1. 
\end{equation}
Define a map $\tilde{\zeta}_1 \in C(\R, E)$ by 
\[
\tilde{\zeta}_1 (\lambda) \coloneqq \begin{dcases}
	T_0 \mu^{-1/(p+1)} \omega_\mu & \text{if} \ \lambda \in (-\infty,\lambda_0],\\
	\sigma_1 \left( (\lambda - \lambda_0) / \delta_0 \right) & \text{if} \ \lambda \in (\lambda_0,\lambda_0 + \delta_0],\\
	\left( 1 - \frac{\lambda - (\lambda_0 + \delta_0)}{\delta_0} \right) T_1 \omega_{\mu_0} + \frac{\lambda - (\lambda_0 + \delta_0)}{\delta_0} T_1 \omega_\mu 
	&\text{if} \ \lambda \in (\lambda_0 + \delta_0, \lambda_0 + 2 \delta_0],\\
	T_1 \omega_\mu & \text{if} \ \lambda \in (\lambda_0+2\delta_0, \infty).
\end{dcases}
\]
By cutting off and mollifying $\tilde{\zeta}_1$ on $[\lambda_0 - 3\delta_0 , \lambda_0 + 3\delta_0 ]$, 
there exists a sequence $(\zeta_{1,n})_{n \in \N} \subset C^2(\R, E)$ such that 
\[
\zeta_{1,n} (\lambda) = \tilde{\zeta}_1 (\lambda) \quad \text{for $|\lambda - \lambda_0| \geq 3\delta_0$}, \quad 
\max_{\lambda_0 - 3\delta_0 \leq \lambda \leq \lambda_0 + 3\delta_0 } \norm{ \zeta_{1,n} (\lambda) - \tilde{\zeta}_1(\lambda) }_E \to 0.
\]
Hence, for a sufficiently large $n$, \eqref{eq_est_ab1}, \eqref{eq_est_ab2} and \eqref{eq_est_ab4} yield 
\begin{equation*}
	I_{G_1+\Xi} (\lambda, \zeta_{1,n}(\lambda) ) < -2A_1m_1 - 1 - \mu m_1 \quad \text{for each $\lambda \in \R$ and $\Xi \in B_{\rho_0}^X$}. 
\end{equation*}
Thus (ii) holds with $\zeta_1 \coloneqq \zeta_{1,n}$ and assertion (i) can be checked easily. 

Finally, for assertion (iii), notice that $g_1 + \xi$ satisfies \ref{(g0')} and \ref{(g1*)} with $\alpha = 0$. 
By the choice of $T_0$ and $T_1$ in the above, noting $\zeta_1 
(\lambda) = T_0 \mu^{-1/(p+1)} \omega_\mu$ for $\lambda \ll -1  
$, 
$\zeta_1 
(\lambda) = T_1 \omega_\mu$ for $\lambda \gg 1$ and following the argument in \cite[Proof of Lemma 3.3 (ii) and (iv)]{CGIT24}, 
we obtain the desired convergence. 
\end{proof}

Let $\zeta_1 \in C^2(\R, E)$ and $\rho_0 \in (0,1/3)$ be as in \cref{Lem_zeta1rho1}, 
and set 
\begin{equation}\label{Def-gamma1}
	\gamma_1(t,\lambda) \coloneqq ( \lambda , t \zeta_1(\lambda) ).
\end{equation}
Then for each $\Xi \in B^X_{\rho_0}$, the minimax value for $I_{G_1+\Xi}$ and the uniform 2-dimensional pass class can be defined as follows (recall \eqref{eq_def_calC(L)}): 
\[
\begin{aligned}
	\overline{b}_{G_1+\Xi} &\coloneqq \inf_{\gamma \in \oGamma} \sup_{ (t,\lambda) \in [0,1] \times \R} I_{G_1+\Xi} (\gamma (t,\lambda)), 
	\\
	\oGamma &\coloneqq \big\{ \gamma \in C( [0,1] \times \R , \R \times E ) \mid \gamma = \gamma_1 \ \text{on $\calC
(L_\gamma)$ for some $L_\gamma > 2 
$}\big\}.
\end{aligned}
\]
Our next aim is to prove that there exists a $\rho_1 \in (0,\rho_0]$ such that 
for each $\Xi \in B^X_{\rho_1}$, 
$\ob_{G_1+\Xi} >0$ corresponds to a positive solution of \eqref{eq_main} with $m=m_1$ and $g=g_1+\Xi'$. 
For this purpose, we first prove the following result. 

\begin{Lemma}
	Let $\gamma_1$ be defined as in \eqref{Def-gamma1}. Then
\begin{equation}\label{eq_bdd_ob_G1Xi}
	\sup_{ \Xi \in B_{\rho_0}^X } \ob_{G_1 + \Xi} \leq \sup_{\Xi \in B_{\rho_0}^X} \sup_{ (t,\lambda) \in [0,1] \times \R} I_{G_1+\Xi} ( \gamma_1(t,\lambda) ) 
	\eqqcolon \overline{M} < \infty.
\end{equation}
The number $\overline{M}$ will take the place of the value $\beta$ in \cref{Prop:non-ex}, see the proof of \cref{Thm:Exw/og2}.
\end{Lemma}

\begin{proof}
From the definition of $\gamma_1$ and the property of $\zeta_1$, 
there exists $C_1>0$ such that for every $\lambda \leq \lambda_0 - 4\delta_0$, $s \in \R$ and $\Xi \in B^X_{\rho_0}$,  
\[
\gamma_1(t,\lambda) = \left( \lambda, t T_0 \mu^{-\frac{1}{p+1}} \omega_\mu \right), \quad 
\left| G_1(s)+\Xi (s) \right| \leq C_1 |s|^{p+1}. 
\]
Hence, \eqref{eq_scaling_omega} gives
\[
\sup_{\Xi \in B^X_{\rho_0}} \sup_{ (t,\lambda) \in [0,1] \times (-\infty,\lambda_0-4\delta_0] } I_{G_1+\Xi} (\gamma_1(t,\lambda)) < \infty. 
\]
On the other hand, when $\lambda \geq \lambda_0 + 4 \delta_0$, by writing $H_1(s) \coloneq G_1(s) - |s|^{p+1}/ (p+1)$, 
in view of \ref{(g1*)} with $\alpha = 0$ and the definition of $X$ (see \eqref{def:X} and \eqref{def:normX}), 
there exists $C_2>0$ such that for every $s \in \R$ and $\Xi \in B_{\rho_0}^X$, 
\[
\gamma_1(t,\lambda) = \left( \lambda, t T_1 \omega_\mu \right), 
\quad 
G_1(s)+\Xi (s) = \frac{|s|^{p+1}}{p+1} + H_1(s) + \Xi(s) \geq \frac{|s|^{p+1}}{p+1} - C_2 |s|^2.
\]
From \eqref{eq_scaling_omega} it follows that 
\[
\sup_{\Xi \in B^X_{\rho_0}} \sup_{ (t,\lambda) \in [0,1] \times [\lambda_0 + 4 \delta_0,\infty) } I_{G_1+\Xi} (\gamma_1(t,\lambda)) < \infty. 
\]
Finally, for $\lambda \in [\lambda_0-4\delta_0,\lambda_0 + 4\delta_0]$, 
since there exists $C_3>0$ such that $|G_1(s) + \Xi(s)| \leq C_3 |s|^{p+1}$ holds for all $s \in \R$ and $\Xi \in B^X_{\rho_0}$, 
it is easily seen from the compactness of $\gamma_1( [0,1] \times [\lambda_0 - 4\delta_0 , \lambda_0 + 4\delta_0] )$ in $\R \times E$  that 
\[
\sup_{\Xi \in B^X_{\rho_0}} \sup_{ (t,\lambda) \in [0,1] \times [\lambda_0 - 4\delta_0 , \lambda_0 + 4 \delta_0 ] } I_{G_1+\Xi} (\gamma_1(t,\lambda)) < \infty. 
\]
Thus, the fact $\gamma_1 \in \oGamma$ and the above three inequalities yield \eqref{eq_bdd_ob_G1Xi}. 
\end{proof}

Next, for $\Xi \in B_{\rho_0}^X$ and $\lambda \in \R$, recall $\mu = e^\lambda$ and set
	\[
b_{G_1+\Xi} (\lambda) \coloneqq \inf_{ \gamma \in \Lambda_{\mu,1} } \max_{0 \leq t \leq 1} 
I_{G_1+\Xi} (\lambda, \gamma(t)), \quad 
\Lambda_{\mu,1} \coloneqq \big\{\gamma \in C([0,1] , E) \mid \gamma(0) = 0, \ \gamma(1) = \zeta_1(\lambda) \big\}.
\]
Remark that $b_{G_1+\Xi} (\lambda) + \mu m_1$ is the least energy of $\Psi_{\mu, G_1+\Xi}$. 
Furthermore, \cite[Proposition 4.2 (ii)]{CGIT24} implies 
\[
b_{G_1+\Xi} (\lambda) \leq \ob_{G_1+\Xi} \quad \text{for all $\Xi \in B^X_{\rho_0}$ and $\lambda \in \R$}. 
\]
In particular, \eqref{eq_extra_blambda} yields 
\[
0 < \tilde{b}_{G_1} = \sup_{\lambda \in \R} b_{G_1} (\lambda) \leq \ob_{G_1}.
\]
Remark that by \cref{prop_b_neg_post}, if $G(s) \leq \frac{1}{p+1} \abs{s}^{p+1}$ holds everywhere, strict in at least one point, then $\tilde{b}_G>0$ (and thus $\ob_G>0$).

To proceed, we prove the following result on $b_{G_1+\Xi} (\lambda)$ and $\tilde{b}
_{G_1+\Xi} \coloneqq \sup_{\lambda \in \R} b_{G_1+\Xi}(\lambda) $. 

\begin{Lemma}\label{Lem:conti}
The following facts hold.
	\begin{enumerate}[label={\rm (\roman*)}]
		\item 
		The map $\R\times B_{\rho_0}^X \ni (\lambda, \Xi) \mapsto b_{G_1+\Xi} (\lambda)$ is continuous. 
		\item 
		There exists $\rho_0' \in (0,\rho_0]$ such that 
		the map $B^X_{\rho_0'} \ni \Xi \mapsto \tilde{b}_{G_1+\Xi}$ is lower semicontinuous and 
		$\tilde{b}_{G_1+\Xi} > 0$ holds for every $\Xi \in B^X_{\rho_0'}$. 
	\end{enumerate}
\end{Lemma}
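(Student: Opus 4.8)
The plan is to deduce (ii) from (i) and to prove (i) via the identity $b_{G_1+\Xi}(\lambda)=a_{G_1+\Xi}(e^{\lambda})-e^{\lambda}m_1$, where $a_{G_1+\Xi}(\mu)$ denotes the least energy level of $\Psi_{\mu,G_1+\Xi}$ (equivalently its mountain pass level). Granting (i), part (ii) is immediate: for each fixed $\lambda$ the map $B^X_{\rho_0}\ni\Xi\mapsto b_{G_1+\Xi}(\lambda)$ is continuous, hence $\tilde{b}_{G_1+\Xi}=\sup_{\lambda\in\R}b_{G_1+\Xi}(\lambda)$ is a supremum of continuous functions of $\Xi$ and therefore lower semicontinuous on $B^X_{\rho_0}$; since $\tilde{b}_{G_1}>0$ by \eqref{eq_extra_blambda}, lower semicontinuity at $\Xi=0$ produces $\rho_0'\in(0,\rho_0]$ with $\tilde{b}_{G_1+\Xi}>\tfrac12\tilde{b}_{G_1}>0$ on $B^X_{\rho_0'}$. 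So everything reduces to the joint continuity of $(\mu,\Xi)\mapsto a_{G_1+\Xi}(\mu)$ on $(0,\infty)\times B^X_{\rho_0}$, which I would split into upper and lower semicontinuity.

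For upper semicontinuity, fix $(\lambda_0,\Xi_0)$, let $(\lambda_j,\Xi_j)\to(\lambda_0,\Xi_0)$, and choose $\gamma\in\Lambda_{\mu_0,1}$ with $\max_t I_{G_1+\Xi_0}(\lambda_0,\gamma(t))<b_{G_1+\Xi_0}(\lambda_0)+\varepsilon$. Since $\gamma([0,1])$ is compact and $\abs{I_{G_1+\Xi_j}(\lambda,u)-I_{G_1+\Xi_0}(\lambda,u)}\le\tfrac12\norm{\Xi_j-\Xi_0}_X\norm{u}_2^2$ while $e^{\lambda_j}\to e^{\lambda_0}$, one has $\max_t I_{G_1+\Xi_j}(\lambda_j,\gamma(t))\to\max_t I_{G_1+\Xi_0}(\lambda_0,\gamma(t))$. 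Turning $\gamma$ into a path of $\Lambda_{\mu_j,1}$ by appending the short arc $s\mapsto\zeta_1\big((1-s)\lambda_0+s\lambda_j\big)$ (which ends the path at $\zeta_1(\lambda_j)$), \cref{Lem_zeta1rho1} and the continuity of $\zeta_1$ guarantee that $I_{G_1+\Xi_j}(\lambda_j,\cdot)$ stays below $\max_t I_{G_1+\Xi_0}(\lambda_0,\gamma(t))+o(1)$ along this arc, so attaching it does not raise the maximum. Hence $\limsup_j b_{G_1+\Xi_j}(\lambda_j)\le b_{G_1+\Xi_0}(\lambda_0)+\varepsilon$, and $\varepsilon\to0$ gives the claim.

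The hard part is lower semicontinuity. Given $(\lambda_j,\Xi_j)\to(\lambda_0,\Xi_0)$, the nonlinearity $f_j(s)\coloneqq g_1(s)+\Xi_j'(s)-\mu_j s$ is odd and satisfies \ref{(f0)}--\ref{(f3)}, so by \cref{lem_asympt_behav} there is a positive radial ground state $w_j$ of $\Psi_{\mu_j,G_1+\Xi_j}$ with $a_{G_1+\Xi_j}(\mu_j)=\Psi_{\mu_j,G_1+\Xi_j}(w_j)=\tfrac1N\norm{\nabla w_j}_2^2$ (Pohozaev). As $a_{G_1+\Xi_j}(\mu_j)=b_{G_1+\Xi_j}(\lambda_j)+\mu_j m_1\le\ob_{G_1+\Xi_j}+\mu_j m_1\le\overline{M}+\mu_j m_1$ by \eqref{eq_bdd_ob_G1Xi}, and $\mu_j\to\mu_0$, the gradients $\norm{\nabla w_j}_2$ are bounded. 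Combining the Nehari identity with the uniform estimate $\abs{h_1(s)s+\Xi_j'(s)s}\le\tfrac{\mu_j}{2}s^2+C\abs{s}^{p+1}$ — valid for large $j$ with $C$ independent of $j$, thanks to \ref{(g1*)} with $\alpha=0$ and $\sup_j\norm{\Xi_j}_X<\infty$ — and the $L^2$-critical Gagliardo--Nirenberg inequality behind \eqref{eq_estimate_GN}, one obtains
\[
\norm{\nabla w_j}_2^2+\tfrac{\mu_j}{2}\norm{w_j}_2^2\le(1+C)\norm{w_j}_{p+1}^{p+1},\qquad
\norm{w_j}_{p+1}^{p+1}\le C'\norm{\nabla w_j}_2^2\norm{w_j}_2^{4/N}.
\]
These yield that $(w_j)$ is bounded in $E$ (the case $N=2$ requires a little extra care here) and, dividing the combined inequality by $\norm{\nabla w_j}_2^2>0$, the uniform lower bound $\norm{w_j}_2\ge c_0>0$. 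Extracting $w_j\rightharpoonup w_\infty$ in $E$ and using the compact radial embedding $E\hookrightarrow L^{p+1}(\RN)$ ($p+1$ being Sobolev-subcritical), $w_j\to w_\infty$ strongly in $L^{p+1}$; if $w_\infty=0$ then $\norm{w_j}_{p+1}\to0$, which forces $\norm{w_j}_2\to0$ by the first inequality, contradicting $\norm{w_j}_2\ge c_0$. Hence $w_\infty\neq0$, and passing to the limit in $-\Delta w_j+\mu_j w_j=g_1(w_j)+\Xi_j'(w_j)$ (using that $\norm{\Xi_j-\Xi_0}_X\to0$ controls $\Xi_j'$ on compacta) shows $w_\infty$ is a nontrivial critical point of $\Psi_{\mu_0,G_1+\Xi_0}$, so $\Psi_{\mu_0,G_1+\Xi_0}(w_\infty)\ge a_{G_1+\Xi_0}(\mu_0)$. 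Finally, weak lower semicontinuity of $\norm{\nabla\cdot}_2^2$ and $\norm{\cdot}_2^2$, strong $L^{p+1}$-convergence, and a Strauss-type compactness lemma (cf. \cite{Str77}, \cite[Theorem A.I]{BeLi83I}) applied to the subcritical parts $H_1$ and $\Xi_j$ of $G_1+\Xi_j$ give $\liminf_j\Psi_{\mu_j,G_1+\Xi_j}(w_j)\ge\Psi_{\mu_0,G_1+\Xi_0}(w_\infty)\ge a_{G_1+\Xi_0}(\mu_0)$, that is, $\liminf_j a_{G_1+\Xi_j}(\mu_j)\ge a_{G_1+\Xi_0}(\mu_0)$.

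I expect the main obstacle to be exactly this lower semicontinuity step, and within it the two facts just used: the uniform $E$-boundedness of the ground states $w_j$ and the uniform lower bound $\norm{w_j}_2\ge c_0$. The latter is the mechanism that rules out vanishing of the weak limit in the $L^2$-critical regime, and it is where \eqref{eq_estimate_GN} together with the sharp decay of $h_1$ and $\Xi_j'$ at $0$ and $\infty$ (so that their contributions are absorbed into $\tfrac{\mu_j}{2}\norm{w_j}_2^2$ and a small multiple of $\norm{w_j}_{p+1}^{p+1}$) genuinely enters; the two-dimensional case of the boundedness also calls for an additional argument.
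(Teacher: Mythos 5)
Your proposal is correct and follows essentially the same route as the paper's proof: (ii) is deduced from (i) as a supremum of continuous functions; upper semicontinuity comes from the minimax characterization (your treatment of the $\lambda$-dependent endpoint $\zeta_1(\lambda)$ via a short connecting arc is in fact more explicit than the paper's); and lower semicontinuity is obtained, as in the paper, by taking least-energy solutions, bounding them through the Pohozaev and Nehari identities, showing the weak limit is a nontrivial critical point of $\Psi_{\mu_0,G_1+\Xi_0}$, and using weak lower semicontinuity plus Strauss-type compactness for the subcritical parts. Two small caveats: for $N=2$ your Gagliardo--Nirenberg route to the $L^2$-bound genuinely fails (the exponent $2-4/N$ vanishes), and the paper closes this case by the rescaling argument of \cref{Prop:non-ex} (cf. \cite{HIT10}), which you only flag; moreover, the uniform absorption estimate $\abs{h_1(s)s+\Xi_j'(s)s}\leq \tfrac{\mu_j}{2}s^2+C\abs{s}^{p+1}$ requires $\Xi_j\to\Xi_0$ in $X$ together with the decay of $\Xi_0'$ at $0$ and $\infty$ (as you note at the end), not merely $\sup_j\norm{\Xi_j}_X<\infty$.
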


\begin{proof}
	(i) 
	Since $\R \times B_{\rho_0}^X \times E \ni (\lambda,\Xi,u) \mapsto I_{G_1+\Xi} (\lambda, u) \in \R$ is continuous, 
	the minimax characterization yields the upper semicontinuity of $b_{G_1+\Xi} (\lambda)$. 
	On the other hand, let $(\lambda_n, \Xi_n) \in \R \times B^X_{\rho_0}$ satisfy $(\lambda_n , \Xi_n) \to (\lambda, \Xi)$ in $\R \times X$. 
	Choose a least energy solution $u_n \in E \setminus \{0\}$ for $\Psi_{\mu_n,G_1+\Xi_n}$, namely, $\Psi_{\mu_n,G_1+\Xi_n}' (u_n) = 0$ and (recall \eqref{eq_amu_blambda})
	$0<\Psi_{\mu_n, G_1+\Xi_n} (u_n) = a_{G_1+\Xi_n} (\mu_n) = b_{G_1+\Xi_n} (\lambda_n) + \mu_n m_1 \leq \overline{M} + \mu_n m_1$ due to \eqref{eq_bdd_ob_G1Xi}. 
	By the Pohozaev identity and the bound on the energy, $\norm{\nabla u_n}_2$ is bounded. If $N\geq 3$, exploiting again the Pohozaev identity and 
	that $\abs{G_1(t)} + \abs{\Xi_n(t)} \leq \delta t^2 + C_{\delta} t^{2^*}$ for $\delta$ small and some $C_{\delta}>0$, 
	we obtain also the boundedness of $\norm{u_n}_2$; in the case $N=2$, we may argue as in the proof of \cref{Prop:non-ex} (see also \cite{HIT10}). 

	Thus, up to a subsequence, $u_n \rightharpoonup u$ in $H^1(\R^N)$, which implies, by compact embeddings,
	$\Psi_{\mu, G_1+\Xi}'(u)=0$ and $\Psi_{\mu, G_1+\Xi}(u)\leq 
	\liminf_{n \to \infty} \Psi_{\mu_n, G_1+\Xi_n}(u_n) = \liminf_{n \to \infty} ( b_{G_1+\Xi_n} (\lambda_n) + \mu_n m_1) $.
	Notice also that $u_n,u \in C^2(\RN)$ and 
$u_n \to u$ in $C_{loc}(\RN)$ due to elliptic regularity and radial symmetry (see \cite[Lemma 1]{BeLi83I}). 
	To prove $u \not \equiv 0$, let $x_n$ be a maximum point of $u_n$ (remark that without loss of generality, we may suppose $u_n > 0$). 
	Since 
	\[
	e^{\lambda_n} u_n(x_n) \leq -\Delta u_n(x_n) + e^{\lambda_n} u_n(x_n) = g_1(u_n(x_n)) + \Xi_n'(u_n(x_n))
	\]
	and $\lim_{s \to 0}(g_1(s) + \Xi_n'(s) ) / s \to 0$ uniformly with respect to $n$ due to $\| \Xi_n - \Xi \|_X \to 0$, 
	the fact $e^{\lambda_n} \to e^{\lambda}$ with $u_n \not \equiv 0$ yields $\inf_{n \in \N} u_n(x_n) > 0$. 
	By Radial lemma and the boundedness of $(u_n)_{n \in \N}$, the sequence $(x_n)_{n \in \N}$ is bounded. 
	Recalling $u_n \to u$ in $C_{loc} (\RN)$ and the strong maximum principle, we have $u > 0$ and $b_{G_1+\Xi} (\lambda) \leq \Psi_{\mu,G_1+\Xi} (u)+\mu m_1$. 
	This leads to $b_{G_1+\Xi}(\lambda) \leq \liminf_{n \to \infty} b_{G_1+\Xi_n}(\lambda_n)$ and (i) holds. 
	
	(ii)
	By (i) and the definition of $\tilde{b}_{G_1+\Xi}$, the map $B_{\rho_0}^X \ni \Xi \mapsto \tilde{b}_{G_1+\Xi}$ is lower semicontinuous. 
	From $\tilde{b}_{G_1} > 0$ and the lower semicontinuity of $\tilde{b}_{G_1+\Xi}$ on $\Xi$, 
	it follows that there exists $\rho_0' \in (0,\rho_0]$ such that $\tilde{b}_{G_1+\Xi} > 0$ holds for every $\Xi \in B^X_{\rho_0'}$. 
\end{proof}

\begin{proof}[Proof of \cref{Thm:Exw/og2}]
	Let $\rho_0' >0$ be as in \cref{Lem:conti} (ii) and set $\beta \coloneqq \overline{M}$ where $\overline{M}$ appears in \eqref{eq_bdd_ob_G1Xi}. 
Observe that $g_1$ satisfies $\tilde{b}_{G_1}>0$, and \ref{(g2w)} thanks to \ref{(g2)}. 
Let $\rho_1=\rho_1(g_1,\beta, \rho_0')=\rho_1(g_1)>0$ be the smallest between $\rho_0'$ and the radius $\rho_{\beta}$ 
given by \cref{Prop:non-ex}, 
thus $g_1+\Xi'$ satisfies \ref{(g2w)} and $\beta 
\geq \ob_{G_1+\Xi} > 0$ 
for each $\Xi \in B^X_{\rho_1} $. 
Fix $\Xi \in B^X_{\rho_1}$ arbitrarily. 
Since $g_1 + \Xi'$ satisfies \ref{(g1*)} with $\alpha = 0$ and \ref{(g2w)}, 
by $\beta \geq \ob_{G_1 + \Xi} > 0$, we may verify that $I_{G_1+\Xi}$ enjoys the $\PSPC_{ \ob_{G_1 + \Xi} }^*$ condition 
as in \cite[Proposition 5.4]{CGIT24} (see also \cref{D:PSPC} and \cref{prop_compact_Kb}). 
Notice that \ref{(g2)} is used in \cite[Subsection 5.4]{CGIT24} and 
the argument is still valid under \ref{(g2w)} provided $0 < \ob_{G_1 + \Xi} \leq \beta$. 
Therefore, we may utilize the deformation argument in \cite[Subsection 8.2
]{CGIT24}, and
obtain the existence of a positive solution corresponding to $\ob_{G_1 + \Xi}$  through a minimax contradiction argument, see \cite[Subsection 8.5]{CGIT24}.
This concludes the proof.
\end{proof}

\begin{proof}[Proof of \cref{Cor:Exw/og2}]
It follows by \cref{prop_b_neg_post} and \cref{Thm:Exw/og2}.
\end{proof}

\begin{proof}[Proof of \cref{Cor:ex-g}]
Let $N \geq 3$. 
We first find $g_1$ satisfying \ref{(g0')}, \ref{(g1*)} with $\alpha = 0$, \ref{(g2)} and \eqref{eq_condg_1}. 
Since \ref{(g3)} 
is equivalent to $F_1' \leq 0$ where $F_1 (s) \coloneqq s^{-2^*} G_1(s)$, we select $F_1 \in C^1((0,\infty))$ so that 
\[
\begin{dcases}
	F_1(s) = \frac{s^{p+1-2^*}}{p+1} \ \ \text{for $s \in (0,1) \cup (3,\infty)$}, \quad 
	F_1(s) \leq \frac{s^{p+1-2^*}}{p+1} \quad \text{for $s \in (1,3)$}, 
	\\
	F_1' \leq 0 \quad \text{in} \ (0,\infty), \quad F_1(2) < \frac{2^{p+1-2^\ast}}{p+1}, \quad F_1'(2) = 0.
\end{dcases}
\]
Notice that such an $F_1$ is easily constructed. 
Then $G_1(s) \coloneq s^{2^*} F_1(s)$ and $g_1 \coloneq G_1'$ satisfy \ref{(g1*)} with $\alpha = 0$, 
\ref{(g2)} and \eqref{eq_condg_1}. 
By extending $G_1$ as an even function, \ref{(g0')} also holds.
By \cref{Cor:Exw/og2} and the proof of \cref{Thm:Exw/og2}, consider $\rho_1>0$ such that, for any $\Xi \in B^X_{\rho_1} 
$, 
$g_1 + \Xi'$ satisfies \ref{(g2w)} for $\beta = \overline{M} 
>0$ in \eqref{eq_bdd_ob_G1Xi} 
and $\ob_{G_1 + \Xi} > 0$ corresponds to a positive solution of \eqref{eq_main} with $g=g_1+\Xi'$. 
In particular, for an even, non-negative $\varphi \in C^\infty(\R)$ with $\supp \varphi_{|(0,+\infty)} \subset 
(1,3)$ and $\varphi'(2) > 
0$, 
consider $\Upsilon (s) \coloneqq \abs{s}^{2^*} \varphi(s)$ and $G \coloneqq G_1 + \eps \Upsilon $. For a sufficiently small $\eps>0$, 
we have $\eps \Upsilon \in X$ 
 and $ \norm{\eps \Upsilon }_X < \rho_1$.
Therefore, \eqref{eq_main} with $g\coloneqq g_1 + \eps \Upsilon'$ admits a positive solution corresponding to $\ob_{G_1 + \eps \Upsilon} > 0$ 
and $g_1 + \eps\Upsilon'$ satisfies \ref{(g2w)}. On the other hand, by 
\[
\frac{d}{ds} \left( \frac{G_1(s) + \eps \Upsilon}{s^{2^*}} \right)\Big|_{s=2} 
= F_1'(2) + \eps \varphi'(2) = \eps \varphi'(2) > 
0,
\]
$g_1 + \eps \Upsilon'$ does not satisfy \ref{(g3)}. Thus \cref{Cor:ex-g} holds. 

Let us finally deal with the case $N=2$. In this case, 
\ref{(g3)} becomes $G(s) \geq 0$ for all $s \geq 0$. Consider any $G_1 \in C^1(\R)$ with the properties: 
	\[
	G_1(s) = \frac{s^{p+1}}{p+1} \quad \text{for $s \in (0,1) \cup (3,\infty)$}, \quad 
G_1(2) = 0, \quad
	G_1(s) \leq \frac{s^{p+1}}{p+1} \quad \text{for $s \in (0,\infty)$}.	
	\]
	Then $g_1 = G_1'$ satisfies the assumption in \cref{Cor:Exw/og2};
	we know there exists $\rho_1>0$ such that for any $\Xi \in
	 B^X_{\rho}$, 
	\eqref{eq_main} has a positive solution with $g=g_1+\Xi'$. 
	Thus, if $\varphi \in C^\infty$ satisfies $\supp \varphi \subset 
	(1,3)$ and $\varphi(2) < 0$, 
	then $g_1 + \eps \varphi$ for sufficient small $\eps>0$ is the desired example. 
\end{proof}

\section*{Acknowledgments}
The authors would like to warmly thank the anonymous referee for many interesting comments which improved the paper, 
	and in particular for suggesting to us the weakening of condition \ref{(rho+1)}.

\section*{Fundings}
S.$\,$C. is partially supported by INdAM-GNAMPA and by PRIN PNRR P2022YFAJH \lq\lq Linear and 
Nonlinear PDEs: New directions and applications''. 
S.$\,$C. thanks acknowledge financial support from PNRR MUR \sloppy
project PE0000023 NQSTI 
- National Quantum Science and Technology Institute (CUP H93C22000670006). 
M.$\,$G. is supported by INdAM-GNAMPA Project 
\lq \lq Metodi variazionali per problemi dipendenti da operatori frazionari isotropi e anisotropi'',
 codice CUP \#E5324001950001\#.
N.$\,$I. is partially supported by JSPS KAKENHI Grant Numbers JP 19H01797, 19K03590 and 24K06802. 
K.$\,$T. is partially supported by JSPS KAKENHI Grant Numbers JP18KK0073, JP19H00644 and 
JP22K03380.


\addcontentsline{toc}{section}{References}


\begin{thebibliography}{999}

\bibitem{AGQ16} S. Alarc\'on, J. Garc\'ia-Meli\'an, A. Quaas,
\emph{Optimal Liouville theorems for supersolutions of elliptic equations with the Laplacian}, 
Ann. Sc. Norm. Super. Pisa Cl. Sci. \textbf{16} (2016), no. 1, 129--158.

\bibitem{ApMo25} L. Appolloni, R. Molle,
\emph{Normalized Schrödinger equations with mass-supercritical nonlinearity in exterior domains},
Discrete Contin. Dyn. Syst. Ser. S (2025), DOI: \href{https://www.aimsciences.org/article/doi/10.3934/dcds.2025120}{10.3934/dcds.2025120}, pp. 25.

\bibitem{AS11} S.$\,$N. Armstrong, B. Sirakov, 
\emph{Nonexistence of positive supersolutions of elliptic equations via the maximum principle}, 
Comm. Partial Differential Equations \textbf{36} (2011), no. 11, 2011--2047.






\bibitem{BaVa13} T. Bartsch, S. de Valeriola,
\emph{Normalized solutions of nonlinear Schrödinger equations},
Arch. Math. (Basel) \textbf{100} (2013), no. 1, 75--83.


\bibitem{BaSo17} T. Bartsch, N. Soave, 
\emph{A natural constraint approach to normalized solutions of nonlinear Schrödinger equations and systems}, 
J. Funct. Anal. \textbf{272} (2017), no. 12, 4998--5037. 
\emph{Corrigendum}: J. Funct. Anal. \textbf{275} (2018), no. 2, 516--521. 

\bibitem{BeGaKa83} H. Berestycki, T. Gallou\"et, O. Kavian, 
\emph{Équations de champs scalaires euclidiens non linéaires dans le plan [Nonlinear Euclidean scalar field equations in the plane]}, 
C. R. Acad. Sci. Paris Sr. I Math. \textbf{297} (1983), no. 5, article ID 307310, pp. 4.

\bibitem{BeLi83I} H. Berestycki, P.-L. Lions, 
\emph{Nonlinear scalar field equations. I. Existence of a ground state}, 
Arch. Rational Mech. Anal. \textbf{82} (1983), no. 4, 313--345.

\bibitem{BDS25}
B. Bieganowski, P. d'Avenia, J. Schino,
\emph{Existence and dynamics of normalized solutions to Schrödinger equations with generic double-behaviour nonlinearities},
J. Differential Equations \textbf{441} (2025), article ID 113489, pp. 36.

\bibitem{BiMe21} B. Bieganowski, J. Mederski,
\emph{Normalized ground states of the nonlinear Schrödinger equation with at least mass critical growth},
J. Funct. Anal. \textbf{280} (2021), no. 11, article ID 108989, pp. 26.


\bibitem{BMS25}
B. Bieganowski, O.$\,$H. Miyagaki and J. Schino, 
\emph{Normalized solutions to polyharmonic equations with Hardy-type potentials and exponential critical nonlinearities}, 
Commun. Contemp. Math. (2025), DOI: \href{https://doi.org/10.1142/S0219199725500889}{10.1142/S0219199725500889}, pp. 12.






\bibitem{CGIT24} S. Cingolani, M. Gallo, N. Ikoma, K. Tanaka,
\emph{Normalized solutions for nonlinear Schrödinger equations with $L^2$ critical nonlinearity}, 
\href{https://arxiv.org/abs/2410.23733}{	arXiv:2410.23733} (2025), pp. 60.







\bibitem{DST23} S. Dovetta, E. Serra, P. Tilli,
\emph{Action versus energy ground states in nonlinear Schrödinger equations},
Math. Ann. \textbf{385} (2023), no. 6, 1545--1576.


\bibitem{GaSc25} 
M. Gallo, J. Schino, 
\emph{A Poho\v{z}aev minimization for normalized solutions: fractional sublinear equations of logarithmic type},
\href{https://doi.org/10.48550/arXiv.2503.24080}{arXiv:2503.24080} (2025), pp. 42.

\bibitem{HIT10} J. Hirata, N. Ikoma, K. Tanaka, 
\emph{Nonlinear scalar field equations in $\R^N$: mountain pass and symmetric mountain pass approaches}, 
Topol. Methods Nonlinear Anal. \textbf{35} (2010), no. 2, 253--276.

\bibitem{HiTa19} J. Hirata, K. Tanaka, 
\emph{Nonlinear scalar field equations with $L^2$ constraint: mountain pass and symmetric mountain pass approaches}, 
Adv. Nonlinear Stud. \textbf{19} (2019), no. 2, 263--290.






\bibitem{JeLe22} L. Jeanjean, T.$\,$T. Le, 
\emph{Multiple normalized solutions for a Sobolev critical Schrödinger equation},
Math. Ann. \textbf{384} (2022), no. 2, 101--134.


\bibitem{JeLu22CV} L. Jeanjean, S.$\,$-S. Lu,
\emph{A mass supercritical problem revisited},
Calc. Var. Partial Differential Equations \textbf{59} (2020), no. 174, pp. 43.



\bibitem{JT0} L. Jeanjean, K. Tanaka, 
\emph{A remark on least energy solutions in $\R^N$}, 
Proc. Amer. Math. Soc. \textbf{131} (2003), no. 8, 2399--2408.

\bibitem{JZZ24} L. Jeanjean, J. Zhang, X. Zhong, 
\emph{A global branch approach to normalized solutions for the Schrödinger equation}, 
J. Math. Pures Appl. (9) \textbf{183} (2024), 44--75.



\bibitem{MeSc24} J. Mederski, J. Schino,
\emph{Least energy solutions to a cooperative system of Schrödinger equations with prescribed $L^2$-bounds: at least $L^2$-critical growth}, 
Calc. Var. Partial Differential Equations \textbf{61} (2022), no. 10, pp. 31.




\bibitem{Sch22} J. Schino,
\emph{Normalized ground states to a cooperative system of Schrödinger equations with generic $L^2$-subcritical or $L^2$-critical nonlinearity},
Adv. Differential Equations \textbf{27} (2022), no. 7-8, 467--496.


\bibitem{ScSm24} J. Schino, P. Smyrnelis,
\emph{Some one-dimensional elliptic problems with constraints},
\href{https://doi.org/10.48550/arXiv.2410.03318}{arXiv:2410.03318} (2024), pp. 17.

\bibitem{Soa20} N. Soave, 
\emph{Normalized ground states for the NLS equation with combined nonlinearities}, 
J. Differential Equations \textbf{269} (2020), no. 9, 6941--6987.


\bibitem{Str77}W.$\,$A. Strauss, 
\emph{Existence of solitary waves in higher dimensions}, 
Comm. Math. Phys. \textbf{55} (1977), no. 2, 149--162.




\bibitem{WeWu22} J. Wei, Y. Wu, 
\emph{Normalized solutions for Schrödinger equations with critical Sobolev exponent and mixed nonlinearities},
J. Funct. Anal. \textbf{283} (2022), article ID 109574, pp. 46.



\end{thebibliography}
\end{document}